\newcommand{\id}{\mathrm{id}}
\newcommand{\Ric}{\mathrm{Ric}}
\newcommand{\e}{\epsilon}
\renewcommand{\S}{\Sigma}
\newcommand{\n}{\nabla}
\renewcommand{\L}{\mathcal{L}}
\newcommand{\E}{\mathcal E}
\newcommand{\g}{{\bar g}}
\renewcommand{\o}{\omega}
\newcommand{\supp}{\mathrm{supp}}
\newcommand{\tr}{\mathrm{tr}}
\renewcommand{\a}{\alpha}
\renewcommand{\b}{\beta}
\renewcommand{\d}{\partial}
\newcommand{\abs}[1]{\left\lvert#1\right\rvert}
\newcommand{\norm}[1]{\left\lVert#1\right\rVert}
\renewcommand{\div}{\mathrm{div}}
\newcommand{\grad}{\mathrm{grad}}
\newcommand{\Hess}{\mathrm{Hess}}
\renewcommand{\H}{\mathcal H}
\renewcommand{\E}{\mathcal E}
\renewcommand{\Re}{\mathrm{Re}}
\theoremstyle{plain}
\newtheorem{thm}{Theorem}[section]
\newtheorem{prop}[thm]{Proposition}
\newtheorem{lemma}[thm]{Lemma}
\newtheorem{cor}[thm]{Corollary}
\theoremstyle{definition}
\newtheorem{definition}[thm]{Definition}
\newtheorem{remark}[thm]{Remark}
\newtheorem{example}[thm]{Example}
\newtheorem{counterexample}[thm]{Counter Example}
\newtheorem{assumption}[thm]{Assumption}
\newtheorem{conjecture}[thm]{Conjecture}
\newcommand{\R}[0]{\mathbb{R}}							
\newcommand{\N}[0]{\mathbb{N}}							
\newcommand{\der}[2]{\frac{d}{d #1}\Big|_{#1 = #2}}
\author{Oliver Lindblad Petersen}
\title{Wave equations with initial data on compact Cauchy horizons}
\address{University of Hamburg, Department of Mathematics, Bundesstraße 55, 20146 Hamburg, Germany}
\email{oliver.petersen@uni-hamburg.de}
\keywords{compact Cauchy horizon, characteristic Cauchy problem, Misner spacetime, Taub-NUT spacetime, strong cosmic censorship}
\subjclass[2010]{Primary 58J45; Secondary 53C50}
\begin{document}
\hbadness=100000
\vbadness=100000

\begin{abstract}
We study the following problem: Given initial data on a compact Cauchy horizon, does there exist a unique solution to wave equations on the globally hyperbolic region? 
Our main results apply to any spacetime satisfying the null energy condition and containing a compact Cauchy horizon with surface gravity that can be normalised to a non-zero constant.
Examples include the Misner spacetime and the Taub-NUT spacetime.
We prove an energy estimate close to the Cauchy horizon for wave equations acting on sections of vector bundles.
Using this estimate we prove that if a linear wave equation can be solved up to any order at the Cauchy horizon, then there exists a unique solution on the globally hyperbolic region.
As a consequence, we prove several existence and uniqueness results for linear and non-linear wave equations without assuming analyticity or symmetry of the spacetime and without assuming that the generators close.
We overcome in particular the essential remaining difficulty in proving that vacuum spacetimes with a compact Cauchy horizon with constant non-zero  surface gravity necessarily admits a Killing vector field.
This work is therefore related to the strong cosmic censorship conjecture.
\end{abstract}

\maketitle

\tableofcontents
\begin{sloppypar}

\section{Introduction}

The purpose of this paper is to present new methods to treat the characteristic Cauchy problem for wave equations with initial data on a \emph{smooth}, \emph{compact}, \emph{totally geodesic} Cauchy horizon \emph{with surface gravity that can be normalised to a non-zero constant}.
To the best of our knowledge, all known examples of compact Cauchy horizons in (electro-)vacuum spacetimes satisfy these conditions.
In fact, it was recently proven that any compact Cauchy horizon in a spacetime satisfiying the null energy condition is necessarily smooth and totally geodesic (see \cite{Larsson2014} and  \cite{Minguzzi2015}).

Since Cauchy horizons are lightlike hypersurfaces, the metric degenerates at the Cauchy horizon.
This paper therefore concerns a geometric singular initial value problem for wave equations.
We prove:
\begin{enumerate}
	\item If a linear wave equation can be solved up to \emph{any order} at the Cauchy horizon, it can be solved uniquely on the globally hyperbolic region (Theorem \ref{mainthm1}).
	In particular, if a solution to a linear wave equation vanishes up to any order at the Cauchy horizon, then it vanishes on the globally hyperbolic region.
	\item If a solution to what we call an \emph{admissible} linear wave equation vanishes at the Cauchy horizon, then it vanishes on the globally hyperbolic region (Theorem \ref{mainthm2}).
	\item Given any initial data to an admissible linear wave equation for scalar valued functions, there exists a unique solution on the globally hyperbolic region (Theorem \ref{mainthm3}).
	\item Given any initial data to an admissible semi-linear wave equation for scalar valued functions, there exists a unique solution on a neighbourhood of the Cauchy horizon (Theorem \ref{mainthm4}).
\end{enumerate}
The statements $(3)$ and $(4)$ require a condition on the Ricci curvature at the Cauchy horizon, implied for example by the dominant energy condition. 
All four statements hold in any spacetime dimension greater or equal to $2$. 
Rather surprisingly, simple counter examples show that the statements $(2-4)$ were false for general wave operators. 
We also give counter examples implying that all four statements were false if we allowed for Cauchy horizons with vanishing surface gravity.

Our results have a natural application in general relativity. 
The strong cosmic censorship conjecture says that the maximal globally hyperbolic vacuum (or suitable matter) developments of generic relativistic initial data is inextendible. 
In other words, the maximally globally hyperbolic hyperbolic vacuum spacetimes which are extendible over a Cauchy horizon are expected to be \emph{non-generic}. 
Moncrief and Isenberg conjectured in \cite{MoncriefIsenberg1983} that a vacuum spacetime with a \emph{compact} Cauchy horizon necessarily admits a Killing vector field in the globally hyperbolic region.
Together with István Rácz we show in \cite{PetersenRacz2018} that the Killing equation can be solved up to any order at a compact Cauchy horizon, if the spacetime is vacuum and the surface gravity can be normalised to a non-zero constant.
Applying the main result of this paper, Theorem \ref{mainthm1}, then proves the Moncrief-Isenberg conjecture under the assumption that the surface gravity of the Cauchy horizon can be normalised to a non-zero constant.
This generalises classical results by Moncrief-Isenberg and Friedrich-Rácz-Wald, who assume that the generators are closed or densely fill a 2-torus (\cite{MoncriefIsenberg1983} \cite{FRW1999} \cite{MoncriefIsenberg2018}).
In particular, \emph{generic} maximally globally hyperbolic vacuum developments of asymptotically flat or compact initial data cannot be extended over a compact Cauchy horizon with surface gravity that can be normalised to a non-zero constant.
This is a natural step towards the strong cosmic censorship conjecture without symmetry assumptions.
We postpone a more detailed discussion of this until we have presented the precise formulation of our main results.

Let $(M, g)$ denote a spacetime, i.e.\ a connected time-oriented Lorentzian manifold. 
Assume that $\S$ is an acausal topological hypersurface of $M$, such that $\S \subset M$ is a closed subset.
Let $n+1$ denote the dimension of $M$, we only require that $n+1 \geq 2$.
The domain of dependence $D(\S) \subset M$ (also called Cauchy development) is a globally hyperbolic submanifold of $M$ and $\S$ is a Cauchy hypersurface for $D(\S)$ (see \cite{O'Neill1983}*{Lem. 43}). 
The classical well-posedness statement for the Cauchy problem says that if $\S$ is smooth and spacelike, then linear wave equations can be solved uniquely on $D(\S)$ if one specifies initial data on $\S$, see e.g.\ \cite{BaerGinouxPfaeffle2007}*{Thm. 3.2.11}. 
In this paper, we instead specify the initial data on a part of the boundary of $D(\S)$. 
By \cite{O'Neill1983}*{Prop. 14.53}, the boundary of $D(\S)$ can be divided into disjoint sets
\[
	\d D(\S) = \H_+ \sqcup \H_-,
\] 
where $\H_\pm := \overline{D_\pm(\S)} \backslash D_\pm(\S)$ are called the \emph{future} and the \emph{past Cauchy horizon of $\S$} respectively. 
$\H_+$ and $\H_-$ are closed achronal lightlike Lipschitz hypersurfaces (if non-empty) of $M$. 
Let from now on $\H$ denote the future or the past Cauchy horizon of $\S$.
If $\H$ is a smooth hypersurface, then $\H \sqcup D(\S)$ is a smooth manifold with boundary. 
In our case, $\H$ will be smooth and compact.
Therefore there will be closed or almost closed lightlike curves in $\H$, which implies that $M$ is not globally hyperbolic.
Our results therefore concern wave equations on \emph{non-globally hyperbolic spacetimes}.
A first example is given by the classical Misner spacetime, where $M = \R \times S^1$ with coordinates $t$ and $x$ and $g = 2dtdx + tdx^2$. 
As shown in Figure \ref{fig: Misner}, the Misner spacetime admits a compact past Cauchy horizon $\H := \{t = 0\}$ of any level set $\S := \{t = C\}$, where $C$ is a positive constant (see Example \ref{ex: Misner spacetime} for more details).

Let us explain what we mean by \enquote{normalising the surface gravity} of $\H$.
Any lightlike vector field $V$ tangent to $\H$ is pre-geodesic, i.e. there is a smooth function $\kappa$ such that
\[
	\n_V V = \kappa V
\]
on $\H$.
Motivated by the corresponding equation on black hole horizons, we call $\kappa$ the \enquote{surface gravity} associated with $V$. 
It is important, however, to note that there is no canonical way to normalise the surface gravity on a compact Cauchy horizon.
Any vector field $fV$ for a function $f$ will give a different surface gravity.

\begin{assumption} \label{as: Cauchy_horizon}
Assume from now on that $\H$ is a (non-empty) smooth, compact, totally geodesic future or past Cauchy horizon of $\S$ and that there is a nowhere vanishing lightlike vector field $V$ tangent to $\H$ such that
\[
	\n_V V = \kappa V
\]
for a \textbf{non-zero constant} $\kappa$.
\end{assumption}

\noindent 
By substituting $V$ with $\frac1 \kappa V$, we may from now on assume that $\kappa = 1$.
The integral curves of $V$ (or their reparametrisations as geodesics) are called \emph{generators} of the Cauchy horizon.

\begin{figure} \label{fig: Misner}
  \begin{center}
    \includegraphics[scale = 0.7]{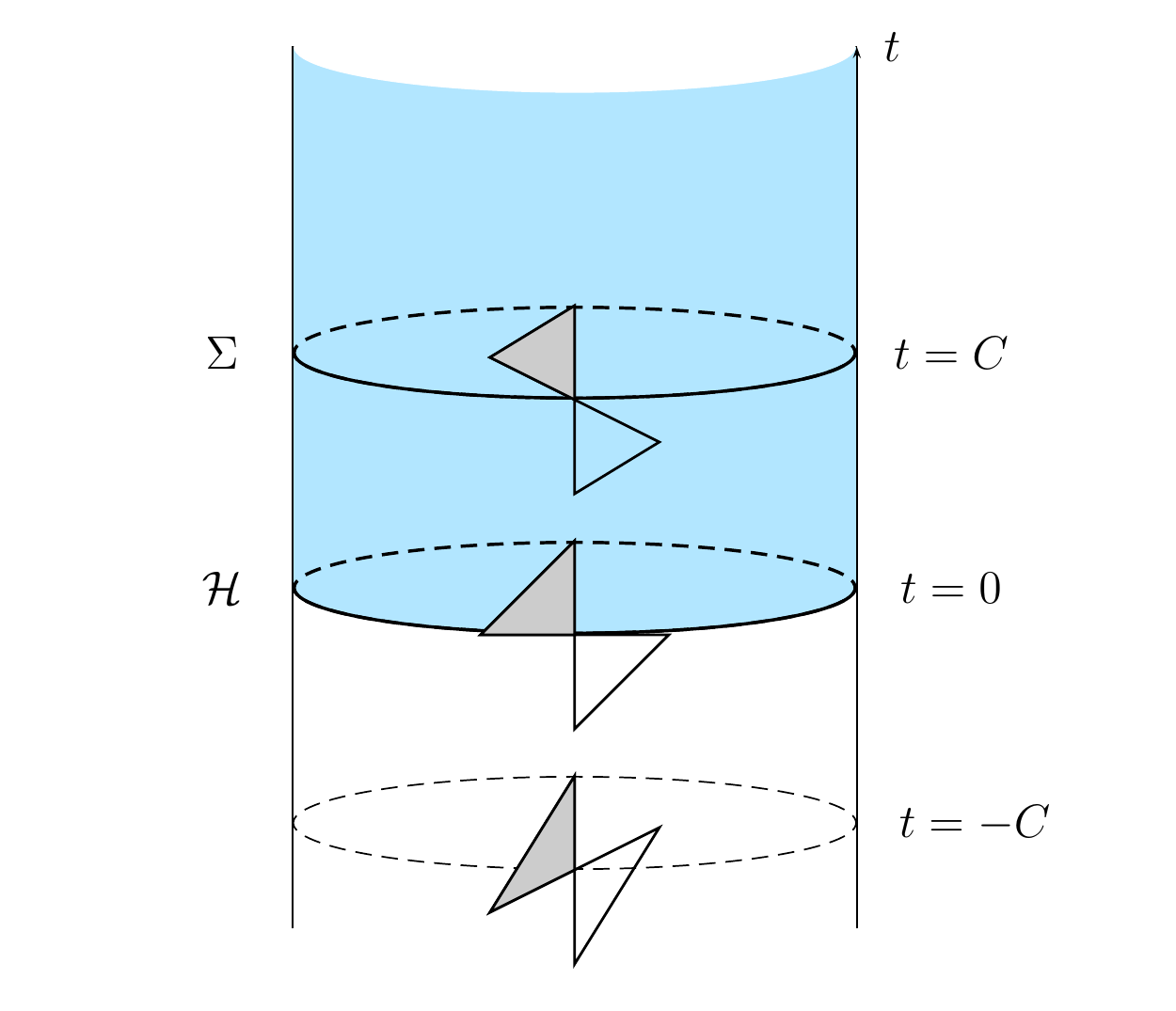}
  \end{center}
  \caption{The Misner spacetime (Example \ref{ex: Misner spacetime}) with three hypersurfaces of different causality type. $\S$ is a closed acausal (spacelike) hypersurface and $\H$ is the corresponding (lightlike) past Cauchy horizon. The blue region is the domain of dependence of $\S$.}
\end{figure}

\begin{remark} \label{rmk: null_energy_condition}
Let us emphasise that Assumption \ref{as: Cauchy_horizon} is automatically satisfied if $\H$ is non-empty and compact, the surface gravity can be normalised to a non-zero constant and $M$ satisfies the null energy condition, i.e.\ $\Ric(L, L) \geq 0$ for all lightlike vectors $L \in TM$.
This follows by an important recent result proven independently by Larsson in  \cite{Larsson2014}*{Cor. 1.43} and Minguzzi in  \cite{Minguzzi2015}*{Thm. 18}.
\end{remark}

\begin{remark}
The Cauchy horizons in the Misner spacetime and in the Taub-NUT spacetime satisfy Assumption \ref{as: Cauchy_horizon}. 
Therefore our results apply to these two important classes of spacetimes.
We will discuss these spacetimes and one further important example with non-closed generators in Section \ref{sec: Remarks}.
\end{remark}

For a subset $N \subset M$ and a vector bundle $F \to M$, let 
\[
	C^\infty(N, F)
\]
denote the space of smooth sections in $F$ defined on $N$.
We write $C^\infty(N)$ instead of $C^\infty(N, F)$ whenever it is clear what vector bundle is meant.

\begin{definition}[Wave operator]
A second order linear differential operator $P$ acting on sections in a vector bundle $F$ is called a \emph{wave operator} if the principal symbol is given by 
\[
	\sigma(P, \xi) = -g(\xi, \xi)
\]
for all $\xi \in T^*M$.
Equivalently, in local coordinates, a wave operator is given by
\[
	P = -g^{\a \b}\d_\a \d_\b + \text{lower order terms}.
\]
\end{definition} 

\begin{assumption}
Let from now on $F \to M$ denote a real or complex vector bundle $F \to M$ with connection $\n$ and let $P$ be a wave operator acting on sections in $F$.
Together with the Levi-Civita connection, we get an induced connection $\n$ on $(T^*M)^j \otimes F$ for each $j \in \N$. 
\end{assumption}

\textbf{Global existence and uniqueness given asymptotic solution.}
Our first main result says that if a linear wave equation can be solved asymptotically up to any order at $\H$, then there is a unique solution on $\H \sqcup D(\S)$ with corresponding asymptotic expansion.

\begin{thm}[Global existence and uniqueness given asymptotic solution] \label{mainthm1}
Let $f \in {C^\infty(\H \sqcup D(\S), F)}$. 
Assume that there are sections $(w^N)_{N \in \N} \subset C^\infty(\H \sqcup D(\S), F)$ such that 
\begin{align*}
	\n^k(Pw^N - f)|_\H &= 0,
\end{align*}
and $\n^k (w^N -w^{N+1})|_\H = 0$ for all $N \in \N$ and $k \leq N$. 
Then there exists a unique $u \in {C^\infty(\H \sqcup D(\S), F)}$ such that
\begin{align*}
	Pu &= f \text{ on } D(\S), \\
	\n^ku|_\H &= \n^kw^{N}|_\H,
\end{align*}
for all $N \in \N$ and $k \leq N$.
\end{thm}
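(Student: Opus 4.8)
The plan is to reduce the global statement to a local energy estimate near $\H$ together with the classical Cauchy theory away from $\H$. First I would fix $N$ and set $r^N := w^N$; by hypothesis $Pr^N - f$ and all its $\n$-derivatives vanish on $\H$, so $Pr^N - f$ is a smooth section on $\H \sqcup D(\S)$ vanishing to infinite order at $\H$. If I can produce, for this fixed $N$, a solution $v^N$ to $Pv^N = f - Pr^N$ on $D(\S)$ with $\n^k v^N|_\H = 0$ for $k \le N$, then $u := r^N + v^N$ solves $Pu = f$ and has the prescribed asymptotics up to order $N$; the compatibility condition $\n^k(w^N - w^{N+1})|_\H = 0$ for $k \le N$ guarantees that the asymptotic data of $u$ are independent of $N$ in the relevant range, so a single $u$ works for all $N$ simultaneously. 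Thus the theorem splits into (a) an existence statement for wave equations with right-hand side vanishing to infinite order at $\H$ and zero asymptotic data, and (b) a uniqueness statement: any solution of $Pu = 0$ on $D(\S)$ with $\n^k u|_\H = 0$ for all $k$ must vanish identically. Uniqueness for the original problem follows from (b) applied to the difference of two solutions.

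For (b), the main tool is the energy estimate near $\H$ that the paper announces (the estimate for wave operators on vector bundles close to the Cauchy horizon). The strategy is: choose a foliation $\{\S_s\}$ of a collar neighbourhood of $\H$ in $\H \sqcup D(\S)$ by acausal hypersurfaces degenerating to $\H$ as $s \to 0$ (e.g.\ level sets of a suitable defining function built from $V$ and the constant surface gravity $\kappa = 1$). Using Assumption \ref{as: Cauchy_horizon}, the normalisation $\kappa = 1$ makes the lapse of this foliation degenerate in a controlled, essentially exponential way, and this is precisely what the energy estimate exploits: one obtains a Gr\"onwall-type inequality for a natural energy $E(s)$ of $u$ on $\S_s$ with a sign that, thanks to $\kappa \neq 0$ being constant, forces $E(s) \to 0$ as $s \to 0$ to propagate forward, i.e.\ $E \equiv 0$ on the collar. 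Hence $u$ vanishes on a neighbourhood of $\H$ in $D(\S)$. Then standard uniqueness for the Cauchy problem on the globally hyperbolic $D(\S)$ (applied with Cauchy data on a spacelike $\S_s$ inside the collar where $u$ and $\n u$ vanish) propagates this to all of $D(\S)$. For (a), existence, I would argue dually/by approximation: exhaust $D(\S)$ by globally hyperbolic pieces with spacelike Cauchy hypersurfaces $\S_{s_j} \to \H$, solve the Cauchy problem on each with zero data on $\S_{s_j}$ and right-hand side $f - Pr^N$ (possible by the classical theory), obtaining $v_j$; the energy estimate gives uniform bounds on $v_j$ and its derivatives on compact subsets, independent of $j$, because the right-hand side vanishes to infinite order at $\H$ and the degenerate-lapse weight in the estimate is integrable. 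A diagonal/compactness argument then extracts a limit $v^N \in C^\infty(\H \sqcup D(\S))$ solving the equation, and the infinite-order vanishing of $f - Pr^N$ together with the energy estimate forces $\n^k v^N|_\H = 0$ for all $k$, in particular for $k \le N$.

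The main obstacle is the energy estimate near the Cauchy horizon: making it work for operators acting on sections of a general vector bundle (not just scalars), with only the bare assumption that $\H$ is smooth, compact, totally geodesic with constant nonzero surface gravity — and crucially \emph{without} assuming the generators of $\H$ close or that the spacetime has any symmetry. The difficulty is that the natural energy current is not manifestly of the right sign once curvature and connection terms from the vector bundle are present, and the hypersurfaces $\S_s$ have a lapse that degenerates, so one cannot simply quote the standard energy inequality; one has to track carefully how the $\kappa = 1$ normalisation controls the error terms (this is where the null energy condition / Ricci condition enters to absorb the bad zeroth-order contributions) and to choose the foliation and the multiplier vector field so that the boundary term at $\H$ has a definite sign. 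Establishing this estimate with constants uniform as $s \to 0$, and uniform over the $j$ in the exhaustion, is the technical heart; once it is in hand, the reduction above and the classical Cauchy theory assemble the proof of Theorem \ref{mainthm1} in a routine way.
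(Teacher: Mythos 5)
Your overall skeleton (solve a family of Cauchy problems with data on spacelike hypersurfaces approaching $\H$, control the limit with an energy estimate near the horizon, and glue with the classical theory on $D(\S)$) is the same as the paper's, but there are two genuine gaps in the way you set it up. First, your reduction starts from the claim that $Pw^N - f$ ``and all its $\n$-derivatives vanish on $\H$''; this is not what the hypothesis gives. Each $w^N$ satisfies $\n^k(Pw^N-f)|_\H=0$ only for $k\le N$, so for fixed $N$ the source $f-Pw^N$ vanishes at $\H$ to order $N$, not to infinite order. One could repair this by Borel-summing the compatible family $(w^N)$ into a single section with a full asymptotic expansion, but you neither do this nor work with the finite-order vanishing directly, and everything downstream (the ``integrable weight'', the conclusion $\n^k v^N|_\H=0$ for all $k$) leans on the false infinite-order claim.

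Second, and more seriously, your quantitative picture of the energy estimate is wrong: near a horizon with surface gravity normalised to $1$ the estimate is \emph{not} one with an integrable Gr\"onwall weight. The actual inequality (Theorem \ref{thm: Energy1}) has a coefficient of order $C/t$, i.e.\ a multiplicative loss $(t_1/t_0)^{D_m}$ and a source weight $t^{-D_m-1/2}$, where $D_m$ grows with the Sobolev order. Consequently, ``$E(s)\to 0$ as $s\to 0$'' does not propagate to $E\equiv 0$; one needs the energy of the difference to vanish at a rate beating $t^{D_m}$, which is exactly why the paper fixes $N>D_m$, compares the approximating solutions with $w^N$ to get bounds of the form $Ct^{N+3/2}$, and then must separately prove that the resulting solution is independent of the choice of $(m,N)$ before bootstrapping regularity. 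Your proposal never engages with this interplay between the order $N$ of the asymptotic solution and the loss $D_m$, and it also treats the smoothness of the limit \emph{up to} $t=0$ (in the paper a nontrivial iteration using $P=\psi\n_t^2+L_1\n_t+L_2$ and $\psi\sim t$, together with the estimates against $w^N$) as a routine compactness statement; compactness only yields finite regularity on $t>0$ plus continuous extension, not the claimed $u\in C^\infty(\H\sqcup D(\S),F)$ with $\n^k u|_\H=\n^k w^N|_\H$.
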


\noindent
The notation $\n^ka|_\H = \n^k b|_\H$ means that 
\[
	\n^k_{X_1, \dots, X_k}a|_\H = \n^k_{X_1, \dots, X_k}b|_\H
\]
for all $X_1, \dots, X_k \in TM|_\H$.
We think of $w^N$ as the asymptotic expansion of the actual solution $u$ up to order $N$.
Let us show in a simple example how Theorem \ref{mainthm1} can be applied.
\begin{example} \label{ex: ex_main1}
Let $(M = \R \times S^1, g = 2dtdx + tdx^2)$ be the already mentioned Misner spacetime (see Figure \ref{fig: Misner} and Example \ref{ex: Misner spacetime}) with past Cauchy horizon $\H = \{t = 0\}$.
We show later in Example \ref{ex: Misner spacetime} that $\H$ actually satisfies our assumptions.
Consider the initial value problem
\begin{align*}
	Pu &= f, \\
	u|_{\H} &= u_0,
\end{align*} 
where $u_0$ is a given smooth function on $S^1$.
To show existence of a solution, we may choose 
\[
	w^N(t, x) := \sum_{j = 0}^{N+1}\frac{u_j(x)t^j}{j!}.
\]
If for example $P = \Box + 1$, then $P = t \d_t\d_t - 2\d_x\d_t + \d_t +1$ and the equations $\n^k(Pw^N - f)|_{\H} = 0$ are equivalent to
\[
	(- 2 \d_x + (k+1))u_{k+1} + u_k - (\d_t)^kf|_{t = 0} = 0
\]
on $S^1$ for each $k \in \N$.
These equations can now be solved iteratively on $S^1$ in a unique way for each order $k \in \N$.
We have thus computed the asymptotic solutions $w^N$ and Theorem \ref{mainthm1} guarantees a solution on the region $\H \sqcup D(\S) = [0, \infty) \times S^1$ to the initial data $u_0$.
Note that this solution in fact unique, since any asymptotic expansion must be of the above form.
In Theorem \ref{mainthm2}, Theorem \ref{mainthm3} and Theorem \ref{mainthm4} we generalise this procedure in different directions.
\end{example}

\noindent The proofs of the other main results of this paper will rely heavily on Theorem \ref{mainthm1} using techniques which generalise Example \ref{ex: ex_main1}.
Note also that we in Example \ref{ex: ex_main1} only specify the restriction of the solution to the hypersurface as initial data and not the transversal derivative as one would do on a spacelike hypersurface.

Choosing $f = 0 = w^N$ in Theorem \ref{mainthm1}, we get the following unique continuation statement.

\begin{cor}[Unique continuation] \label{cor: unique_cont}
Assume that $u \in C^\infty(\H \sqcup D(\S), F)$ satisfies
\begin{align*}
	Pu &= 0 \text{ on } D(\S),\\
	\n^k u|_\H &= 0,
\end{align*} 
for all $k \in \N$.
Then $u = 0$ on $\H \sqcup D(\S)$.
\end{cor}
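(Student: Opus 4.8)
The plan is to apply Theorem \ref{mainthm1} directly, with the trivial choices $f = 0$ and $w^N = 0$ for every $N \in \N$. First I would check that these choices satisfy the hypotheses of the theorem. Since $P$ is linear, $P w^N = P(0) = 0$, so $\n^k(P w^N - f)|_\H = \n^k 0|_\H = 0$ for all $N$ and all $k$, and likewise $\n^k(w^N - w^{N+1})|_\H = 0$ holds trivially. Thus all the assumptions of Theorem \ref{mainthm1} are met, and the theorem produces a \emph{unique} $\tilde u \in C^\infty(\H \sqcup D(\S), F)$ satisfying $P\tilde u = 0$ on $D(\S)$ and $\n^k \tilde u|_\H = \n^k w^N|_\H = 0$ for all $N \in \N$ and $k \leq N$.

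The remaining step is to identify the given $u$ with $\tilde u$ using the uniqueness clause. The zero section obviously satisfies $P(0) = 0$ on $D(\S)$ and $\n^k 0|_\H = 0$ for all $k$, and the section $u$ from the statement satisfies $Pu = 0$ on $D(\S)$ and $\n^k u|_\H = 0$ for all $k \in \N$ by hypothesis; both are therefore solutions of the problem solved uniquely by $\tilde u$. Hence $u = \tilde u = 0$ on $\H \sqcup D(\S)$. There is no genuine obstacle in this argument: all of the analytic difficulty (the energy estimate near $\H$ and the global existence/uniqueness on $D(\S)$) is already encapsulated in Theorem \ref{mainthm1}, and Corollary \ref{cor: unique_cont} is simply its specialisation to vanishing source and vanishing asymptotic data.
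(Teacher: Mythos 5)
Your proposal is correct and is exactly the paper's argument: the paper derives Corollary \ref{cor: unique_cont} by choosing $f = 0$ and $w^N = 0$ in Theorem \ref{mainthm1} and invoking its uniqueness clause, just as you do. Nothing is missing.
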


\noindent 
Statements reminiscent of these were up to now only known in the analytic setting (or in special situations), using a Cauchy-Kowalevski argument.
Our argument relies instead on an energy estimate close to the Cauchy horizon, which allows us to drop the highly restrictive analyticity assumption. 
It is also interesting to note that Theorem \ref{mainthm1} were false if we allowed for vanishing surface gravity, see Counter Example \ref{counterex: non-zero}. 

\textbf{Uniqueness for admissible wave equations.}
In Corollary \ref{cor: unique_cont} it was assumed that the solution vanishes up to infinite order at the Cauchy horizon.
It is however well-known that the solution to a linear wave equation on globally hyperbolic spacetimes is uniquely determined by its restriction to certain lightlike hypersurfaces, see \cite{BaerWafo2014}*{Thm. 23}.
Since the spacetimes we consider here are not globally hyperbolic, \cite{BaerWafo2014}*{Thm. 23} does not apply.
It turns out that under a certain condition on the wave operator the solution is uniquely determined by its restriction on $\H$, whereas for general wave operators the solution is not unique.
Let us define this class of \emph{admissible} wave operators. 
Note that $P$ is a wave operator if and only if
\[
	Pu = \n^*\n u + B(\n u) + A(u),
\]
for some smooth homomorphism fields $B$ from $T^*M \otimes F$ to $F$ and $A$ from $F$ to $F$ and where
\begin{equation}
	\n^*\n := -\tr_g(\n^2). \label{eq: connection-dAlembert}
\end{equation}

\begin{definition}[Admissible wave operator] \label{def: admissible}
Assume that $a$ is a symmetric or hermitian positive definite metric on $F$ such that
\begin{equation}
	\n_Va|_\H(w, w) + a|_\H(B(g(V, \cdot) \otimes w), w) \leq 0, \label{eq: admissible}
\end{equation}
for all $w \in F|_\H$. 
Then we call $P$ an \emph{admissible wave operator with respect to $\H$}.
\end{definition}

\begin{thm}[Uniqueness for admissible wave equations] \label{mainthm2}
Let $P$  be an admissible wave operator with respect to $\H$ in the sense of Definition \ref{def: admissible}. Assume that $u \in C^\infty(\H \sqcup D(\S), F)$ such that
\begin{align*}
	Pu &= 0 \text{ on } D(\S),\\
	u|_\H &= 0.
\end{align*} 
Then $u = 0$ on $\H \sqcup D(\S)$.
\end{thm}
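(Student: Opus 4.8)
The strategy is to reduce the statement to Corollary \ref{cor: unique_cont}, that is, to show that a solution $u$ with $u|_\H = 0$ actually vanishes to infinite order at $\H$. Once this is established, Corollary \ref{cor: unique_cont} immediately gives $u = 0$ on $\H \sqcup D(\S)$. So the entire content of the proof lies in the infinite-order vanishing, which I would prove by induction on the order $k$ of vanishing of $\n^k u|_\H$. The base case $k = 0$ is the hypothesis $u|_\H = 0$. For the inductive step, suppose $\n^j u|_\H = 0$ for all $j \leq k$; I want to conclude $\n^{k+1} u|_\H = 0$.

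For the inductive step I would set up an energy argument along the generators of $\H$, using the vector field $V$ with $\n_V V = V$ and the positive definite metric $a$ from the admissibility condition. The first thing to notice is that since $u$ vanishes to order $k$ along $\H$, the only components of $\n^{k+1} u|_\H$ that are not automatically zero are those involving at least one derivative transversal to $\H$ — in fact, by an inductive commutation-of-derivatives argument, it suffices to control a single scalar quantity built from the highest transversal derivative of $u$, say $\n^{k+1}_{L, \dots, L, V, \dots} u$ where $L$ is a fixed transversal lightlike field. Differentiating the equation $Pu = 0$ (written as $\n^*\n u + B(\n u) + A(u) = 0$) $k$ times and restricting to $\H$, the lower-order terms $B(\n u)$, $A(u)$ and most of the second-derivative terms vanish on $\H$ by the inductive hypothesis, leaving a transport equation along the generators for the leading transversal derivative. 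The admissibility inequality \eqref{eq: admissible}, namely $\n_V a|_\H(w,w) + a|_\H(B(g(V,\cdot)\otimes w), w) \leq 0$, is precisely the sign condition needed so that the $a$-energy of this leading derivative is non-increasing (or satisfies a favorable Grönwall-type inequality) along the flow of $V$. Since the generators of a compact Cauchy horizon are either closed or recur (by compactness the flow of $V$ has a non-trivial recurrence), a non-increasing non-negative energy that returns near its starting point must be identically zero — this is the key structural point that makes compactness of $\H$ essential. Hence the leading transversal derivative vanishes on $\H$, completing the induction.

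I would organize the computation by first fixing convenient local frames adapted to $\H$: a lightlike $V$ tangent to $\H$, a transversal lightlike $L$ with $g(V, L) = 1$, and a complement spanning a spacelike distribution; the totally geodesic hypothesis on $\H$ keeps the connection terms manageable. The main obstacle, and the step requiring the most care, is the commutator bookkeeping: showing rigorously that after $k$ derivatives of $Pu = 0$ restricted to $\H$, everything except the transport term for the top transversal derivative genuinely vanishes under the inductive hypothesis, and identifying the coefficient of that transport term with exactly the combination $\n_V a + a(B(g(V,\cdot)\otimes \cdot), \cdot)$ appearing in the admissibility condition. This is where the precise form of $P$ as $\n^*\n + B(\n\cdot) + A$ and the normalization $\n_V V = V$ both get used, and where a naive approach would drown in index computations; I would try to isolate the relevant transport equation abstractly (e.g. contracting $Pu = 0$ against $g(L, \cdot)$ twice in a suitable sense) rather than expanding in coordinates. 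A secondary subtlety is making the recurrence argument uniform: one needs the energy estimate with constants independent of the generator, which compactness of $\H$ provides, but it must be stated carefully since individual generators need not be periodic.
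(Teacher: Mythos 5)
Your overall architecture matches the paper's proof: reduce to Corollary \ref{cor: unique_cont} by showing $\n_t^{k+1}u|_\H=0$ inductively, where differentiating $Pu=\n^*\n u+B(\n u)+A(u)=0$ $k$ times and restricting to $\H$ yields (this is the paper's Lemma \ref{le: iterative}) the transport relation $0=2\n_V\n_t^{k+1}u+2(k+1)\n_t^{k+1}u-B(g(V,\cdot)\otimes\n_t^{k+1}u)$ on $\H$, and admissibility then gives $\d_V\, a(w,w)\le -2(k+1)\,a(w,w)$ for $w=\n_t^{k+1}u|_\H$. The gap is in your concluding step. The principle you invoke --- \enquote{a non-increasing non-negative energy that returns near its starting point must be identically zero} --- is false: non-increase plus recurrence only forces the energy to be constant along recurrent orbits, not zero. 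What actually kills the energy is the strictly positive zeroth-order coefficient $2(k+1)$, which comes precisely from the surface gravity being normalised to $1$ (via $\d_t\psi(0,\cdot)=2$); a mechanism that does not use this constant in an essential way cannot be correct, since Theorem \ref{mainthm2} fails for vanishing surface gravity (Counter Example \ref{counterex: non-zero}). Moreover, even granting the exponential decay $E(\phi_s(p))\le e^{-2(k+1)s}E(p)$, compactness of $\H$ does \emph{not} make every generator recurrent (Birkhoff/minimal-set arguments give recurrent points, not recurrence of all orbits), so your argument as stated only yields $E=0$ on the recurrent set and leaves the remaining generators uncovered; your \enquote{uniformity over generators} worry does not fix this.

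The repair is short and avoids dynamics altogether, and it is what the paper does: since $\H$ is compact, $E:=a(\n_t^{k+1}u,\n_t^{k+1}u)|_\H$ attains its maximum at some point, where $\d_V E=0$; the inequality $\d_V E\le -2(k+1)E$ then forces $E\le 0$ at that point, hence $E\equiv 0$ by positive definiteness of $a$, completing the induction. (Equivalently, your parenthetical Gr\"onwall inequality suffices without any recurrence: if $E(p)>0$, then backwards along the complete flow of $V$ one gets $E(\phi_{-s}(p))\ge e^{2(k+1)s}E(p)\to\infty$, contradicting boundedness of the continuous function $E$ on the compact $\H$.) A minor further imprecision: under the inductive hypothesis the only surviving component of $\n^{k+1}u|_\H$ is the \emph{purely} transversal one; mixed derivatives such as $\n^{k+1}_{L,\dots,L,V,\dots}u$ already vanish after commuting the tangential slot to the front, since the commutators are of order $\le k$.
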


\begin{remark}
Remarkably, the conclusion of Theorem \ref{mainthm2} fails for general wave operators.
There is one particularly important example. 
Let $F := TM$ and let $\n$ be the Levi-Civita connection.
Since $g$ is not positive definite on $F$, the operator $\n^*\n$ is \emph{not} a priori admissible in the sense of Definition \ref{def: admissible}. 
We show in Section \ref{sec: Remarks} that the conclusion of Theorem \ref{mainthm2} is in fact false for $\n^*\n$ acting on vector fields.
\end{remark}

\begin{remark}
If $B = 0$ and $\n$ is metric with respect to $a$, then it is clear that $P$ is admissible.
The condition is however much more general than that. 
For example, assume that $\n$ is metric with respect to $a$, i.e.
\[
	\n a = 0,
\]
and assume that $B|_\H = \sum_{i = 1}^n e_i \otimes L_i$, where $e_i$ is some local frame in $T\H$ and $L_i$ are local endomorphism fields of $F$. 
Since $V$ is lightlike it follows that $g(V, e_i) = 0|_\H$ and hence equation \eqref{eq: admissible} is satisfied. 
\end{remark}

\textbf{Well-posedness for admissible scalar valued wave equations.}
Let us move on to the existence results. 
We consider scalar valued equations, i.e.\ $F = \R \times M$ with the canonical connection $\n := \d$ and $a$ given by pointwise multiplication.
Since the coefficients are real, complex scalar valued solutions can be treated as real valued solutions by considering the real and imaginary parts separately.
The usual d'Alembert operator is given by 
\[
	\Box := \n^*\n,
\]
or expressed in local coordinates as
\[
	\Box = - g^{\a \b}\left(\d_\a \d_\b - \d_{\n_{\d_\a}\d_\b}\right) = - g^{\a \b}\left(\d_\a \d_\b - \Gamma^\gamma_{\a\b}\d_\gamma\right).
\]
The general form of a wave operator acting on scalar valued functions is
\[
	P = \Box + \d_W + \a
\]
where $W$ is a smooth vector field and $\a$ is a smooth function. 
It turns out that we can express the admissibility condition in Definition \ref{def: admissible} as a condition on the restriction of $W$ to $\H$.
Recall that $\H \sqcup D(\S)$ is a smooth manifold with boundary $\H$.
We can ask for each $p \in \H$ whether $W|_p$ is inward pointing (pointing into $D(\S)$), tangent to $\H$ or outward pointing (pointing out of $D(\S)$). 

\begin{lemma}[Admissible linear scalar valued wave equations]\label{le: scalar_admissible}
A wave operator $P = \Box + \d_W + \a$ is admissible in the sense of Definition \ref{def: admissible} (with the above choices of $F$, $\n$ and $a$) if and only if $W|_\H$ is nowhere outward pointing, i.e.\ if $W|_p$ is inward pointing or tangent to $\H$ at each $p \in \H$.
\end{lemma}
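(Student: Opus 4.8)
The plan is to compute the admissibility condition \eqref{eq: admissible} explicitly for the scalar case, where $F = \R \times M$, $\n = \d$, and $a$ is pointwise multiplication. First I would identify the homomorphism field $B$ for the scalar wave operator $P = \Box + \d_W + \a$. Writing $Pu = \n^*\n u + B(\n u) + A(u)$, we have $\n^*\n u = \Box u$, $A(u) = \a u$, and the first-order term $\d_W u = du(W) = g(W^\flat, \cdot)$ paired appropriately; so $B$ is the homomorphism $T^*M \otimes F \to F$ given by $B(\xi \otimes w) = \xi(W) w$, or equivalently $B(g(Y, \cdot) \otimes w) = g(Y, W) w$ for any vector field $Y$. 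In particular $B(g(V, \cdot) \otimes w) = g(V, W) w$.

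Next I would plug this into \eqref{eq: admissible}. Since $a$ is just multiplication by $1$ (the canonical metric on the trivial bundle) and $\n = \d$ is metric with respect to it, the term $\n_V a|_\H(w,w)$ vanishes. Hence \eqref{eq: admissible} reduces to
\[
	g(V, W)\, w^2 \leq 0
\]
for all $w \in F|_\H = \R$, which is equivalent to $g(V, W)|_\H \leq 0$ pointwise on $\H$. So the whole lemma reduces to showing: $g(V, W)|_p \leq 0$ if and only if $W|_p$ is inward pointing or tangent to $\H$ at $p$.

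The remaining step is the geometric interpretation of the sign of $g(V, W)$. Here $V$ is the nowhere vanishing lightlike vector field tangent to $\H$ from Assumption \ref{as: Cauchy_horizon}. At each $p \in \H$, the tangent space $T_p\H$ is the $g$-orthogonal complement of $V|_p$ (since $\H$ is lightlike and $V|_p$ is its null direction), so $g(V, Y)|_p = 0$ for $Y \in T_p\H$; this immediately handles the "tangent to $\H$" case and shows $g(V, W)$ depends only on the transversal component of $W$. To fix the sign convention for "inward pointing," I would note that $V$, being tangent to a Cauchy horizon $\H = \H_\pm$ of $\S$, is future- or past-directed consistently, and a short computation in a Gaussian-null-type coordinate system (or directly from the Misner-type normal form) shows that a vector $X$ transverse to $\H$ points into $D(\S)$ precisely when $g(V, X) < 0$ (and out of $D(\S)$ when $g(V, X) > 0$); the orientation of $V$ relative to $D(\S)$ is what pins this down. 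Combining the three cases — $g(V,W) < 0$ (inward), $g(V,W) = 0$ (tangent), $g(V,W) > 0$ (outward) — gives exactly the stated equivalence.

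The main obstacle I anticipate is getting the sign convention exactly right: one must carefully track the orientation of $V$ (is it future- or past-pointing, and is $\H$ a future or past Cauchy horizon) so that "nowhere outward pointing" matches "$g(V,W) \le 0$" rather than the reverse. This is best resolved by checking against the explicit Misner model $g = 2\,dt\,dx + t\,dx^2$ with $\H = \{t=0\}$, $D(\S) = \{t > 0\}$, where $V = \d_x$ is lightlike and tangent to $\H$, the inward normal direction is $\d_t$, and $g(V, \d_t) = g(\d_x, \d_t) = 1 > 0$ — so in that model the inward-pointing condition corresponds to $g(V, W) > 0$, and the sign in the general statement must be fixed accordingly (possibly $V$ is chosen past-directed there, flipping the sign); the abstract argument above should then be adjusted to be consistent with whatever convention is used elsewhere in the paper. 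Everything else is routine once $B$ is identified and the null-orthogonality $T_p\H = V|_p^\perp$ is invoked.
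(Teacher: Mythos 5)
Your reduction of the admissibility condition is correct and is exactly the paper's first step: since $\n = \d$ is metric with respect to pointwise multiplication, the term $\n_Va$ in \eqref{eq: admissible} drops out, $B(g(V,\cdot)\otimes w) = g(V,W)\,w$, and admissibility becomes $g(V, W|_\H) \leq 0$. Up to there you and the paper agree.

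The gap is in the second half: you never actually settle the sign that turns $g(V,W|_\H)\leq 0$ into \enquote{nowhere outward pointing}, and your attempted consistency check is wrong, which is why you end the proof with the sign left \enquote{to be adjusted}. Concretely: in the Misner model $g = 2\,dt\,dx + t\,dx^2$ the vector field $\d_x$ satisfies $\n_{\d_x}\d_x = -\tfrac12\,\d_x$ (Example \ref{ex: Misner spacetime}), so its surface gravity is $-\tfrac12$, not $1$; the vector field of Assumption \ref{as: Cauchy_horizon} after the normalisation $\kappa = 1$ is $V = -2\,\d_x$, which is future directed, and then $g(V,\d_t) = -2 < 0$ with $\d_t$ inward pointing. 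So the Misner model confirms, rather than contradicts, your abstract claim that inward-pointing transversal vectors pair negatively with $V$; your apparent discrepancy comes entirely from using the unnormalised $\d_x$. Since the whole content of the lemma beyond the first reduction is precisely this sign (it is what distinguishes \enquote{nowhere outward pointing} from \enquote{nowhere inward pointing}), leaving it unresolved is a genuine gap. The clean way to close it — and what the paper does — is to invoke the null time function of Proposition \ref{prop: time_function}: $\d_t$ is an inward-pointing lightlike vector field along $\H$ with $g(V,\d_t)|_\H = -1$, so writing $W|_\H = \beta\,\d_t + (\text{part tangent to } \H)$ and using $g(V,X)=0$ for $X\in T\H$ gives $g(V,W)|_\H = -\beta$; hence $g(V,W|_\H)\leq 0$ iff $\beta\geq 0$ iff $W|_\H$ is nowhere outward pointing. (For a future Cauchy horizon the paper's time-reversal convention takes care of the orientation, so no case distinction is needed.)
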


\noindent 
If for example $W|_\H = 0$, then $P$ is admissible.
We give the proof of Lemma \ref{le: scalar_admissible} in Section \ref{sec: linear_characteristic}.

\begin{thm}[Well-posedness for admissible scalar valued wave equations]\label{mainthm3}
Assume that $\Ric(V, X) = 0$ for all $X \in T\H$.
Let $P = \Box + \d_W + \a$ be a wave operator on scalar valued functions such that
\[
	W|_\H
\]
is nowhere outward pointing.
For every $u_0 \in C^\infty(\H)$ and $f \in C^\infty(\H \sqcup D(\S))$, there is a unique $u \in C^\infty(\H \sqcup D(\S))$ such that
\begin{align*}
	Pu &= f, \\
	u|_{\H} &= u_0.
\end{align*}
Moreover, the solution $u$ depends continuously on the data $(u_0, f)$.
\end{thm}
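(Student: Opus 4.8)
The plan is to deduce the theorem from Theorem \ref{mainthm1}. From the single datum $u_0 \in C^\infty(\H)$ I will construct asymptotic solutions $(w^N)_{N\in\N}$ of $Pu=f$ at $\H$ with $w^N|_\H = u_0$, feed them into Theorem \ref{mainthm1} to obtain a solution $u$, and then read off uniqueness and continuous dependence. The construction amounts to solving, order by order in the distance to $\H$, a hierarchy of transport equations along the generators of $\H$; this is exactly where the three hypotheses --- $\kappa$ a non-zero constant, $\Ric(V,\cdot)|_{T\H}=0$, and $W|_\H$ nowhere outward pointing --- are used, and it is also where the counterexamples for general wave operators and for vanishing surface gravity live.

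\emph{Step 1 (the transport hierarchy).} One first sets up coordinates $(t, x^1, \dots, x^n)$ near $\H$ with $\H = \{t=0\}$, $t>0$ on $D(\S)$ near $\H$, and $t$ normalised so that $\grad t|_\H$ is a nowhere vanishing multiple of the generator $V$; such coordinates are available because $\H$ is a smooth compact totally geodesic null hypersurface with surface gravity normalised to $\kappa = 1$. Writing $P = \Box + \d_W + \a$ in these coordinates and using that $\H$ is null and totally geodesic, a computation shows that differentiating $Pu-f$ transversally $k$ times and restricting to $\H$ yields, for $u_j := \d_t^j u|_\H \in C^\infty(\H)$, a hierarchy whose $k$-th equation is a transport equation along $V$,
\[
	V u_{k+1} + c_k\, u_{k+1} = \psi_k(u_0, \dots, u_k, f), \qquad c_k = k + \kappa + W^t|_\H = k + 1 + W^t|_\H ,
\]
where $W^t|_\H \in C^\infty(\H)$ is the transversal component of $W$ along $\H$ and $\psi_k$ is a linear differential expression, of finite order, in $u_0, \dots, u_k$ (tangentially along $\H$) and in finitely many derivatives of $f$ restricted to $\H$. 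The three hypotheses enter here: $\Ric(V,\cdot)|_{T\H}=0$ (in particular $\Ric(V,V)=0$) is what allows one to bring the $k$-th equation into this normalised form, with the mixed transversal-tangential second-order part of $P$ reducing on $\H$ to the single vector field $V$ and with coefficient $k$ in front of $u_{k+1}$; the constancy of $\kappa$ produces the constant summand $\kappa$; and the assumption that $W|_\H$ is nowhere outward pointing --- equivalently, by Lemma \ref{le: scalar_admissible}, that $P$ is admissible --- gives $W^t|_\H \ge 0$, so that $c_k \ge k + 1 \ge 1$ everywhere on $\H$. This positivity of the coefficients $c_k$ is the crucial structural fact.

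\emph{Step 2 (solving the hierarchy and concluding).} The datum $u_0$ is free; one solves for $u_1, u_2, \dots$ recursively. Since $\H$ is compact, the flow $\Phi_s$ of $V$ is complete, and for a coefficient $c_k \ge 1$ and a smooth right-hand side the transport equation $Vv + c_k v = \psi_k$ has the unique bounded solution
\[
	v(p) = \int_{-\infty}^{0} \exp\left(-\int_{\sigma}^{0} c_k(\Phi_\tau p)\, d\tau\right) \psi_k(\Phi_\sigma(p))\, d\sigma ,
\]
the integral and all its $p$-derivatives converging by the uniform bounds available on the compact $\H$, and uniqueness holding because $c_k$ never vanishes. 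Each $u_{k+1}$ so produced depends linearly and continuously on $(u_0, f)$ in the $C^\infty$-topology. Fixing a cutoff $\chi$ equal to $1$ near $\H$ and supported in the coordinate neighbourhood, one sets $w^N := \chi \sum_{j=0}^{N+1} \tfrac{1}{j!}\, u_j\, t^j \in C^\infty(\H \sqcup D(\S))$ and checks from the construction that $\n^k(Pw^N - f)|_\H = 0$ and $\n^k(w^N - w^{N+1})|_\H = 0$ for all $k \le N$; since $Pw^N - f$ and $w^N - w^{N+1}$ vanish at $\H$ to the relevant order in every direction, vanishing of their transversal $\d_t$-jets upgrades to vanishing of $\n^k|_\H$. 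Theorem \ref{mainthm1} then gives a unique $u \in C^\infty(\H \sqcup D(\S))$ with $Pu = f$ on $D(\S)$ and $\n^k u|_\H = \n^k w^N|_\H$ for all $N$ and $k \le N$; in particular $u|_\H = u_0$, which proves existence. For uniqueness, if $\tilde u$ solves the same problem then $\tilde u - u$ solves $P(\tilde u - u) = 0$ and vanishes on $\H$, whence $\tilde u = u$ by Theorem \ref{mainthm2} (applicable because $P$ is admissible) --- alternatively, the unique solvability of the hierarchy forces $\n^k \tilde u|_\H = \n^k u|_\H$ for all $k$ and Corollary \ref{cor: unique_cont} applies. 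Continuous dependence follows by composing continuous maps: $(u_0, f) \mapsto (u_j)_j \mapsto (w^N)_N$ is continuous by the preceding, and the solution operator furnished by Theorem \ref{mainthm1} is continuous in the asymptotic data by the energy estimate near $\H$ on which its proof rests.

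\emph{Main obstacle.} The delicate part is Step 1, namely identifying the transport structure of the hierarchy and verifying that the coefficients $c_k$ are bounded away from zero. This is precisely where all three hypotheses are indispensable, and it accounts for the counterexamples in the paper: for a Cauchy horizon with $\kappa = 0$ the hierarchy loses its transport form; and if $W|_\H$ is outward pointing then $c_k$ can vanish somewhere on $\H$ for some $k$, so that the corresponding transport equation is neither uniquely nor --- absent a compatibility condition on $\psi_k$ --- always solvable, and both existence and uniqueness for the Cauchy problem fail.
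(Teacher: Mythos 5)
Your overall architecture (reduce to Theorem \ref{mainthm1} by building the asymptotic expansion from $u_0$ via a hierarchy of transport equations along $V$, get uniqueness from Theorem \ref{mainthm2} via Lemma \ref{le: scalar_admissible}) is the paper's architecture, and Step 1 is essentially Lemma \ref{le: iterative}. But there is a genuine gap in Step 2, precisely at the point where the hypothesis $\Ric(V,X)=0$ for $X\in T\H$ actually enters — and you have misplaced that hypothesis. The normalised transport form of the hierarchy, with coefficient $k+1+\frac12\beta$ and $\beta\ge 0$ from admissibility, needs no curvature assumption at all: Lemma \ref{le: iterative} holds for every wave operator. What does need an extra input is your claim that the integral formula
\[
	v(p) = \int_{-\infty}^{0} \exp\Bigl(-\int_{\sigma}^{0} c_k(\Phi_\tau p)\, d\tau\Bigr)\, \psi_k(\Phi_\sigma(p))\, d\sigma
\]
defines a \emph{smooth} function, ``the integral and all its $p$-derivatives converging by the uniform bounds available on the compact $\H$.'' Compactness of $\H$ gives no such bounds: for a general nowhere vanishing vector field $V$ on a compact manifold the differential $d\Phi_\sigma$ can grow exponentially as $\sigma\to-\infty$, at a rate governed by the Lyapunov exponents of the flow, and when that rate exceeds $c_k$ (which is entirely possible for the low orders, e.g.\ $c_0=1+\frac12\beta$) the formally differentiated integrand is not integrable and the bounded solution of $\d_Vv+c_kv=\psi_k$ is in general only Hölder continuous, not $C^\infty$. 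Since the $u_{k+1}$ must be smooth to serve as coefficients of $w^N$ in Theorem \ref{mainthm1}, this breaks the construction.

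The paper closes exactly this gap with Lemma \ref{le: ODE_Riem}: the transport equation is solved smoothly because $V$ is a Killing field of a Riemannian metric on $\H$, namely the auxiliary metric $\sigma$ of equation \eqref{eq: sigma}, so that the flow $\Phi_s$ acts by isometries, $\abs{d\Phi_s(Y)}_\sigma=\abs{Y}_\sigma$, and dominated convergence applies to all derivatives of the integral. Verifying $\L_V\sigma|_{t=0}=0$ is where the hypothesis $\Ric(V,X)|_\H=0$ is used (it forces $[V,e_i]\in E$, hence $\L_V\sigma(V,e_i)=0$); without it $V$ need not preserve any Riemannian metric and the smooth solvability of the hierarchy genuinely fails, not merely the method of proof. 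So you should either prove this Killing property (importing the Ricci hypothesis at that point) or supply some other argument for smoothness of the $u_{k+1}$; as written, the Ricci assumption plays no role in your proof, which is a sign the argument cannot be complete. Two smaller points: your continuity claim for the solution operator of Theorem \ref{mainthm1} is not contained in its statement — the paper instead gets continuous dependence from bijectivity plus the open mapping theorem for Fréchet spaces — and the cutoff/extension of $w^N$ off the collar is harmless but should be said to leave the jets at $\H$ unchanged, which it does.
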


\begin{remark}
A natural condition to impose on the Ricci curvature in general relativity is the dominant energy condition.
Defining $T := \Ric - \frac12 \mathrm{Scal} g$, it requires that for any future pointing causal vector field $X$, $-T(X, \cdot)^\sharp$ is a future pointing causal vector field as well.
Since $\H$ is totally geodesic, it follows by \cite{Kupeli1987}*{Thm. 30} that $g(\n_X V, Y) = 0$ for all tangent vectors $X, Y \in T\H$. 
Using this, one calculates that $\Ric(V, V)|_{\H} = 0$.
Now if the dominant energy condition is satisfied, then $T(V, V)|_{\H} = \Ric(V, V)|_{\H} = 0$.
Therefore $-T(V, \cdot)^\sharp|_\H$ is tangent to $\H$ and causal and hence lightlike.
Hence $-T(V, \cdot)^\sharp|_\H = fV$ for some smooth function $f$.
We conclude that $\Ric(V, X)|_{\H} = T(V, X)|_\H = g(-fV, X)|_\H = 0$ for any $X \in T\H$.
\end{remark}

\noindent 
It is interesting to note that in general, there exist solutions to $P u = 0$ on $D(\S)$ that do not extend continuously up to $\H$. 
See Remark \ref{rmk: blowup} for an explicit example. 
In Theorem \ref{mainthm3} we consider only those solutions that extend smoothly to the Cauchy horizon.
Let us also emphasise that we only assume specify the restriction of the function to $\H$ in Theorem \ref{mainthm3}. 
This is typical for characteristic Cauchy problems (where the initial hypersurface is lightlike), c.f. \cite{BaerWafo2014}*{Sec. 4}.
We show in Counter Example \ref{coex: scalar} that both the uniqueness and the existence statements in Theorem \ref{mainthm3} are false for general wave operators, i.e.\ if we drop the assumption on $W$. 
This is rather surprising, since both the Cauchy problem and the usual characteristic Cauchy problem on globally hyperbolic spacetimes are uniquely solvable for \emph{any} linear wave equation, see \cite{BaerWafo2014}. 
In particular, the well-posedness theory in our setting depends not on the principal symbol (which is fixed for wave operators), but on the \emph{first order part} of the wave operator!

\textbf{Local well-posedness for admissible scalar valued semi-linear wave equations.}
Using Theorem \ref{mainthm3} and the energy estimates, we are able to show local existence for semi-linear wave equations. 

\begin{thm}[Local well-posedness for admissible scalar valued semi-linear wave equations]\label{mainthm4}
Assume that $\Ric(V, X) = 0$ for all $X \in T\H$.
Let $P = \Box + \d_W + \a$ be a wave operator such that
\[
	W|_\H
\]
is nowhere outward pointing. 
Let $f \in C^\infty((\H \sqcup D(\S))\times \R)$ and let $u_0 \in C^\infty(\H)$ be given. 
Then there is an open subset $U \subset \H \sqcup D(\S)$ such that $\H \subset U$ and a unique $u \in C^\infty(U)$ such that
\begin{align*}
	P u &= f(u), \\
	u|_\H &= u_0. 
\end{align*}
\end{thm}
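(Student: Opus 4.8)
The plan is to reduce the semi-linear problem to the linear theory of Theorem \ref{mainthm3} via a Banach fixed point argument, carried out on a sufficiently small collar neighbourhood of $\H$. First I would fix a collar $\Phi \colon [0, T_0) \times \H \to \H \sqcup D(\S)$ adapted to the generators, so that the ``time'' coordinate $t$ measures distance from the horizon in a controlled way, and regions of the form $U_T := \Phi([0, T) \times \H)$ form a decreasing family of manifolds-with-boundary shrinking to $\H$ as $T \to 0$. On such $U_T$ one has the energy estimates from the proof of Theorem \ref{mainthm3} available, with constants that are uniform (or at worst controlled) as $T \to 0$; the condition $\Ric(V, X) = 0$ on $T\H$ and the hypothesis that $W|_\H$ is nowhere outward pointing are exactly what make the linear solution operator bounded on these collars. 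The point of working on a small collar rather than all of $\H \sqcup D(\S)$ is that the nonlinearity $f(u)$ is only Lipschitz in $u$ on bounded sets, so one needs the measure (or the energy weight) of the domain to be small in order to beat the Lipschitz constant.

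The core of the argument is the contraction mapping. Given the smooth initial datum $u_0 \in C^\infty(\H)$, I would first invoke the construction in Example \ref{ex: ex_main1} (generalised as in the proof of Theorem \ref{mainthm3}) to produce formal asymptotic solutions $w^N$ of the linear problem $P w = f(w_0^{\mathrm{asympt}})$ with $w|_\H = u_0$ — more precisely one solves the full semi-linear problem \emph{formally} order by order at $\H$, which is possible because at each order the transport equation along the generators is an ODE on $\H$ with a non-zero coefficient coming from $\kappa = 1$, exactly as in Example \ref{ex: ex_main1}; the nonlinearity only contributes lower-order data at each step. This yields $w^N \in C^\infty(\H \sqcup D(\S))$ with $\n^k(P w^N - f(w^N))|_\H = 0$ and consecutive $w^N$ agreeing to the appropriate order. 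Subtracting off such a formal solution, it suffices to solve for $v := u - w$ a problem with data vanishing to infinite order at $\H$, where the nonlinearity is now $\tilde f(v) := f(v + w) - f(w) - (\text{correction})$, still smooth and vanishing together with $v$ at $\H$. Define the map $\mathcal{S} \colon v \mapsto \tilde v$, where $\tilde v$ is the solution given by Theorem \ref{mainthm3} of the \emph{linear} equation $P \tilde v = \tilde f(v)$ with zero data on $\H$ (this uses that Theorem \ref{mainthm3} applies with the same $W$, $\alpha$, and the same Ricci hypothesis). On a closed ball in an energy space over $U_T$ — say sections with $\n^k v|_\H = 0$ for $k \le N$ and bounded energy norm — the estimates show $\mathcal{S}$ maps the ball to itself and is a contraction once $T$ is small enough, since $\|\mathcal{S} v_1 - \mathcal{S} v_2\| \le C(T) \|v_1 - v_2\|$ with $C(T) \to 0$. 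The unique fixed point $v$, added back to $w$, is the desired solution $u$ on $U := U_T$.

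Two points require care and I expect them to be the main obstacles. The first is regularity: the fixed point is a priori only in the energy space, and one must bootstrap to $C^\infty(U)$. This should follow by differentiating the equation, noting that derivatives of $u$ satisfy linear wave equations (with $f'(u)$-type coefficients now known to be as regular as the current estimate permits) with the same admissible principal and first-order structure, and re-running the energy estimate of Theorem \ref{mainthm3} at each order — a standard but nontrivial iteration, relying crucially on the fact that the linear energy estimates hold for \emph{all} orders of derivatives near $\H$ (which is the content of the energy estimate underlying Theorem \ref{mainthm1}). The second, more delicate, is uniqueness: the theorem claims $u$ is the unique smooth solution on $U$, but a priori a different solution might live on a smaller neighbourhood or have larger energy. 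Here I would argue that any two smooth solutions $u_1, u_2$ agreeing with $u_0$ on $\H$ must, by the formal solvability order by order, agree to infinite order at $\H$ (the asymptotic expansion is uniquely determined by $u_0$ and the equation), hence $u_1 - u_2$ vanishes to infinite order at $\H$; then Corollary \ref{cor: unique_cont} applied to the linear wave equation satisfied by $u_1 - u_2$ (with coefficients built from $f$, $u_1$, $u_2$) forces $u_1 = u_2$ on the globally hyperbolic part of their common domain. Shrinking $U$ to lie inside any given competing solution's domain then gives the stated local uniqueness.
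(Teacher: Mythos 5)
Your proposal is correct and essentially reproduces the paper's argument: construct the formal solution of the semilinear equation order by order at $\H$ (via the transport ODEs along $V$, which rely on the Ricci condition and the admissibility of $W$), iterate the linear solvability of Theorem \ref{mainthm3} on a small collar $[0,T)\times\H$ with the weighted energy estimate of Theorem \ref{thm: Energy1} supplying the smallness in $T$, bootstrap smoothness up to $\H$ from agreement with the asymptotic expansion, and linearize the difference of two solutions for uniqueness. The paper phrases the fixed point as an explicit Picard iteration on smooth iterates starting from $w^N$ (so the linear solver is only ever applied to smooth right-hand sides, avoiding your issue of defining the solution map on an energy-space ball) and obtains uniqueness directly from Theorem \ref{mainthm2}, which needs only $u|_\H$ to match, rather than your infinite-order matching plus Corollary \ref{cor: unique_cont}; these are minor variants of the same proof.
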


\noindent We believe that one is able to prove local existence for a much larger class of non-linear wave equations than those treated in Theorem \ref{mainthm4} using the methods presented here. 
For simplicity of presentation, we only consider the formulation in Theorem \ref{mainthm4}.

\begin{remark}
We show in Section \ref{sec: Remarks}, by giving explicit counter examples, that Theorem \ref{mainthm1}, Theorem \ref{mainthm2}, Theorem \ref{mainthm3} and Theorem \ref{mainthm4} were all false if we allowed for vanishing surface gravity.
\end{remark}

\textbf{Application to the strong cosmic censorship conjecture.}
Let us now discuss the already mentioned connection to general relativity in more detail.
By the strong cosmic censorship conjecture, it is expected that vacuum spacetimes containing a compact Cauchy horizon have very special geometry.
The study of this problem was initiated by Moncrief in \cite{Moncrief1982}, \cite{Moncrief1984} and by Moncrief and Isenberg in \cite{MoncriefIsenberg1983}, \cite{IsenbergMoncrief1985}.
Most importantly, they made the following conjecture in $1983$:
\begin{conjecture}[Moncrief-Isenberg \cite{MoncriefIsenberg1983}] \label{conj: Moncrief-Isenberg}
Any smooth vacuum spacetime with a compact Cauchy horizon $\H$ admits a Killing vector field in the globally hyperbolic region $D(\S)$, extending smoothly up to the Cauchy horizon $\H$.
\end{conjecture}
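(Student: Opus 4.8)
The plan is to split the conjecture according to the surface gravity of $\H$ and to treat the two regimes by completely different arguments. In a vacuum spacetime $\Ric \equiv 0$, so the null energy condition holds trivially and, by the theorem of Larsson \cite{Larsson2014} and Minguzzi \cite{Minguzzi2015} recalled in Remark \ref{rmk: null_energy_condition}, the compact Cauchy horizon $\H$ is automatically smooth and totally geodesic; moreover the surface gravity $\kappa$ associated with any nowhere vanishing lightlike generator field $V$ is a well-defined smooth function on $\H$, and $\H \sqcup D(\S)$ is a smooth manifold with boundary. The basic dichotomy is then: either $\kappa$ can be normalised to a non-zero constant, or it cannot (the \emph{degenerate} regime, which in particular contains the case $\kappa \equiv 0$).

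In the non-degenerate case the conjecture follows from the results already available. By the work carried out with Rácz in \cite{PetersenRacz2018}, if the surface gravity of the compact Cauchy horizon in a vacuum spacetime can be normalised to a non-zero constant, then the Killing equation can be solved to every order at $\H$: there are vector fields $(w^N)_{N \in \N} \subset C^\infty(\H \sqcup D(\S), TM)$ with $\n^k(\L_{w^N}g)|_\H = 0$ for $k \leq N$, matching transversal jets $\n^k(w^N - w^{N+1})|_\H = 0$, and $w^N|_\H \neq 0$. One now applies Theorem \ref{mainthm1} to the wave operator $\n^*\n$ on the bundle $F = TM$ with the Levi-Civita connection (the Ricci zeroth-order term governing Killing fields vanishes in vacuum). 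This produces a unique smooth $X \in C^\infty(\H \sqcup D(\S), TM)$ with $\n^*\n X = 0$ on $D(\S)$ and full Taylor expansion at $\H$ equal to that of the formal Killing field; in particular $X \not\equiv 0$. Finally, the symmetric $2$-tensor $\L_X g$ satisfies a linear wave equation whose coefficients are controlled by $\n^*\n X$ and $\div X$, and by construction its full infinite jet at $\H$ vanishes; Corollary \ref{cor: unique_cont} then forces $\L_X g \equiv 0$ on $D(\S)$, so $X$ is the desired Killing field extending smoothly to $\H$.

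\textbf{The degenerate case is the main obstacle, and in full generality it remains open.} Two difficulties must be overcome. First, one must rule out, or else handle directly, a \enquote{mixed} horizon on which $\kappa$ vanishes only on a proper subset: a zeroth-law statement asserting that the surface gravity of a compact Cauchy horizon in vacuum can always be normalised to a constant would reduce matters to $\kappa \equiv 0$, but no such statement is known without additional symmetry hypotheses, so the possibly non-constant $\kappa$ has to be carried through the argument. Second, when $\kappa \equiv 0$ the energy estimate underpinning Theorem \ref{mainthm1} degenerates; indeed the counterexamples of Section \ref{sec: Remarks} show that all of Theorems \ref{mainthm1}--\ref{mainthm4} fail for vanishing surface gravity, so the machinery used above cannot be invoked and a different mechanism for producing the Killing field is required. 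The natural route is to exploit the rigidity of the near-horizon geometry of a degenerate compact horizon — the induced metric and the successive transversal jets of $g$ at $\H$ are strongly constrained — in the spirit of Moncrief--Isenberg \cite{MoncriefIsenberg2018}, and to pair this either with a Cauchy--Kowalevski argument in the analytic category or with a new energy estimate adapted to the weight degenerating as $\kappa \to 0$. Closing this last gap, that is, constructing the Killing field in the degenerate regime without analyticity or symmetry, is the crux; the contribution of the present paper is to settle the non-degenerate case of the conjecture completely.
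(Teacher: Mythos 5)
The statement you were asked to prove is labelled a \emph{conjecture} in the paper and is not proven there: the paper, together with \cite{PetersenRacz2018}, establishes it only under the additional hypothesis that the surface gravity of $\H$ can be normalised to a non-zero constant (Theorem \ref{thm: PetersenRacz}), precisely by solving the Killing equation to all orders at $\H$ and then applying Theorem \ref{mainthm1} together with unique continuation (Corollary \ref{cor: unique_cont}) for $\L_X g$ — which is exactly your non-degenerate branch. Your assessment that the degenerate regime remains open is also consistent with the paper, whose counterexamples in Section \ref{sec: Remarks} show that the energy-estimate machinery behind Theorem \ref{mainthm1} genuinely fails for vanishing surface gravity. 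So your proposal is correct as far as it goes, follows essentially the same route as the paper for the case it settles, and rightly does not claim the full conjecture.
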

\noindent 
Moncrief and Isenberg proved this conjecture in \cite{MoncriefIsenberg1983} in spacetime dimension $4$ under the assumptions that the spacetime is analytic and that the generators of the Cauchy horizon are closed.
They moreover proved that if the generators are closed, then the surface gravity can always be normalised to a constant (however potentially zero).
The same authors recently generalised their result in \cite{MoncriefIsenberg2018} to also cover the case when the generators of the horizon densely fill a $2$-torus (the analytic non-ergodic case), still under the assumption of analyticity. 
In both these results, they used on analyticity of the spacetime metric and a Cauchy-Kovalewski argument to propagate the Killing vector field off the Cauchy horizon.

Friedrich, Rácz and Wald proved the conjecture for smooth spacetimes of dimension $4$ under the assumptions that the compact Cauchy horizon has closed generators and that the surface gravity can be normalised to a non-zero constant in \cite{FRW1999} (see also \cite{R2000} including matter fields and \cite{HollandsIshibashiWald2007} \cite{IsenbergMoncrief2008} for extensions to higher dimensions).
They relied on the fact that the generators of the Cauchy horizon were closed and that the initial data was invariant along the generators.
This allowed to transform the problem into the well studied characteristic Cauchy problem with initial data on two intersecting null hypersurfaces.

If one does not assume that the generators close or that the spacetime metric is analytic, one instead needs Theorem \ref{mainthm1} to solve this wave equation.
In order to apply Theorem \ref{mainthm1} to Conjecture \ref{conj: Moncrief-Isenberg}, the author and István Rácz generalise in \cite{PetersenRacz2018} the computations by Moncrief-Isenberg in \cite{MoncriefIsenberg1983}.
We prove that the Killing equation can be solved up to any order at a compact Cauchy horizon with constant non-zero surface gravity, without any assumption on the generators.
Theorem \ref{mainthm1} can then be applied to prove the following statement (which holds in any spacetime dimension $n+1 \geq 2$).

\begin{thm}[see \cite{PetersenRacz2018}] \label{thm: PetersenRacz}
Any smooth vacuum spacetime containing a compact Cauchy horizon $\H$, with surface gravity that can be normalised to a non-zero constant, admits a Killing vector field in the globally hyperbolic region $D(\S)$, extending smoothly up to the Cauchy horizon $\H$.
\end{thm}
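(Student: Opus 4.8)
The plan is to reduce the existence of a Killing vector field to the solvability of a linear wave equation that fits the hypotheses of Theorem \ref{mainthm1}. The key structural fact is that on a vacuum (Ricci-flat) spacetime, a vector field $X$ is Killing if and only if it satisfies the linear wave equation $\Box_{Hodge} X^\flat = 0$ (equivalently $\na^*\na X = \Ric(X,\cdot)^\sharp = 0$ since $\Ric = 0$) \emph{together with} the first-order constraint $\div X = 0$; more precisely, one shows that if $X$ solves the wave equation and the constraints hold on the initial hypersurface, then the antisymmetrized derivative $\L_X g$ itself satisfies a homogeneous linear wave equation with vanishing Cauchy data, hence vanishes identically. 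This is the standard propagation-of-constraints argument, and here the bundle is $F = T^*M$ with $\na$ the Levi-Civita connection, $P = \na^*\na$ (which is a wave operator in the sense of the paper since $\na^*\na = -\tr_g \na^2 + \text{curvature terms}$).

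The first step is therefore to invoke \cite{PetersenRacz2018}: on the compact Cauchy horizon $\H$ with constant nonzero surface gravity, one can construct, order by order, a formal solution $(w^N)_{N\in\N} \subset C^\infty(\H\sqcup D(\S), T^*M)$ to the Killing equation at $\H$ — that is, sections satisfying $\na^k(P w^N)|_\H = 0$ and the agreement condition $\na^k(w^N - w^{N+1})|_\H = 0$ for $k \le N$, and which moreover satisfy the constraint equations (the $\div$ condition and the Killing condition $\L_X g = 0$) to all orders at $\H$. This is exactly the input format of Theorem \ref{mainthm1} with $f = 0$. Applying Theorem \ref{mainthm1} produces a genuine smooth solution $X^\flat = u \in C^\infty(\H\sqcup D(\S), T^*M)$ with $\na^*\na X^\flat = 0$ on $D(\S)$ and $\na^k u|_\H = \na^k w^N|_\H$ for all $N$ and $k\le N$.

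The second step is to upgrade this solution of the wave equation to an honest Killing field. Set $K := \L_X g$, a symmetric $2$-tensor. Using Ricci-flatness and the identity relating $\L_X g$ to the Hodge Laplacian on $X^\flat$, one derives a linear wave equation $\na^*\na K = \mathcal{R}(K)$ for some zeroth-order curvature operator $\mathcal{R}$ (this is where the full strength of vacuum is used). By construction of the $w^N$ in \cite{PetersenRacz2018}, all derivatives $\na^j K|_\H$ vanish. Applying the unique continuation statement Corollary \ref{cor: unique_cont} (or Theorem \ref{mainthm1} directly) to the equation for $K$ yields $K \equiv 0$ on $\H\sqcup D(\S)$, i.e.\ $X$ is Killing on $D(\S)$ and extends smoothly to $\H$. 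Uniqueness of $X$ (given its asymptotics at $\H$) is immediate from the uniqueness clause of Theorem \ref{mainthm1}, and nontriviality of $X$ follows from the construction in \cite{PetersenRacz2018}, where the leading-order term at $\H$ is essentially $V^\flat$, which is nonzero.

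The main obstacle — and the reason this theorem is deferred to a companion paper — is the first step: verifying that the Killing equation really can be solved to \emph{all} orders at $\H$ when the generators need not close, which requires carefully generalizing the Moncrief–Isenberg recursion and checking that no obstruction arises at any order (the surface gravity being a nonzero constant is exactly what makes the recursion at each order solvable, in parallel with the role it plays in the energy estimate underlying Theorem \ref{mainthm1}). Granting that input, the remaining work here is the by-now routine propagation-of-constraints computation plus a clean application of Theorem \ref{mainthm1} and Corollary \ref{cor: unique_cont}; one should also double-check that $P = \na^*\na$ on $T^*M$ genuinely satisfies the hypotheses of Theorem \ref{mainthm1} (it does: Theorem \ref{mainthm1} is stated for arbitrary wave operators on arbitrary vector bundles, with no admissibility assumption needed).
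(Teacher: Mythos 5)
Your proposal follows essentially the same route the paper indicates: the theorem is deferred to \cite{PetersenRacz2018}, whose content is precisely the all-orders asymptotic solution of the Killing equation at $\H$, which is then fed into Theorem \ref{mainthm1} (no admissibility needed), with the Killing property recovered by the standard propagation-of-constraints argument, i.e.\ a homogeneous wave equation for $\L_Xg$ with data vanishing to all orders at $\H$ and the unique continuation statement of Corollary \ref{cor: unique_cont}. This matches the strategy described in the introduction, so there is nothing substantive to add beyond noting that the genuinely new work lives in the companion paper, exactly as you say.
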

\noindent 
Again, to the best of our knowledge, all known examples of compact Cauchy horizons in vacuum spacetimes satisfy our assumption on the surface gravity.
As a consequence of Theorem \ref{thm: PetersenRacz}, the maximal globally hyperbolic development of \emph{generic} compact or asymptotically flat vacuum initial data 
cannot be extended over a compact Cauchy horizons with surface gravity that can be normalised to a non-zero constant. 
Theorem \ref{thm: PetersenRacz} constitutes therefore a natural first step towards the strong cosmic censorship conjecture without symmetry assumptions.

A topic related to this work is to understand the asymptotics of the Einstein equation, using methods for so called Fuchsian equations. 
The goal there is to prescribe the asymptotic behaviour at the initial singularity and show the existence of a solution to the Einstein equation with those asymptotics. 
Solutions with asymptotics that are bounded towards the initial singularity could be interpreted as solutions of certain wave equations with initial data on a compact Cauchy horizon. 
Most of the results are done in the analytic setting, see \cite{AnderssonRendall2001} and references therein. 
Some more recent results dropped the assumption of analyticity, see \cite{ABIL2013}, \cite{ABIL2013_2}, \cite{BH2012}, \cite{BH2014}, \cite{Rendall2000}, \cite{BL2010} and \cite{Stahl2002} and references therein. 
In all cases known to the author, where analyticity is not assumed, one instead assumes symmetry of the spacetime when applying the Fuchsian methods. 
In contrast to this, we neither assume analyticity nor symmetry of the spacetime. 

There are many results on the characteristic Cauchy problem, also for non-linear wave equations, when the lightlike hypersurface is a light cone or two intersecting lightlike hypersurfaces, see \cite{CP2012}, \cite{DossaTadmon2010} and \cite{Rendall1990} and references therein. 
In these results, the solution is shown to exist on (a part of) the domain of dependence of the lightlike hypersurface where the initial data are specified.
Since the domain of dependence of a compact Cauchy horizon is in general nothing but the Cauchy horizon itself (see Remark \ref{rmk: domain_dependence}), these results do not apply to our case.
For linear wave equations, the characteristic Cauchy problem has been studied for more general lightlike hypersurfaces by Bär and Tagne Wafo in the already mentioned \cite{BaerWafo2014}, generalising results of Hörmander in \cite{Hormander1990}. 
The difference to our work is that they consider lightlike initial hypersurfaces which are subsets of globally hyperbolic spacetimes, whereas we do not.

The paper is structured as follows. 
In Section \ref{sec: Remarks} we give examples of spacetimes containing compact Cauchy horizons and give counter examples of seemingly potential generalisations of our main results.
Section \ref{sec: Energy} is the analytic core of the paper, where we formulate and prove our energy estimate. 
In Sections \ref{sec: global_given_asymptotic}-\ref{sec: non-linear_characteristic} we prove our main results, Theorems \ref{mainthm1}, \ref{mainthm2}, \ref{mainthm3} and \ref{mainthm4}.

\subsubsection*{Acknowledgements}
I am especially grateful to Andreas Hermann for numerous discussions and helpful comments on this work. 
I also want to thank my PhD supervisor Christian Bär for important discussions concerning initial value problems for wave equations on curved spacetimes. 
Furthermore, I want to thank Vincent Moncrief, István Rácz, Hans Ringström and Florian Hanisch for helpful comments. 
Finally, I would like to thank the Berlin Mathematical School, Sonderforschungsbereich 647 and Schwerpunktprogramm 2026, funded by Deutsche Forschungsgemeinschaft, for financial support.

\section{Examples and remarks} \label{sec: Remarks}

Before proceeding with the proofs, let us start by giving counter examples to three seemingly potential generalisations of our main results. 
We also give some examples to point out certain differences between the problem studied here and the characteristic Cauchy problem in globally hyperbolic spacetimes \cite{BaerWafo2014}.
Most importantly, due to the peculiar nature of compact Cauchy horizons, there will be no use of the fact that solutions to wave equations obey \enquote{finite speed of propagation} (see Remark \ref{eq: no_finite_speed}).
The behaviour of waves close to Cauchy horizons is naturally \enquote{global}.
This section is independent of the proofs of our main results, which starts in Section \ref{sec: Energy}.
We first study the simplest spacetime where our results apply.

\begin{example}[Misner spacetime] \label{ex: Misner spacetime}
Define the Misner spacetimes as
\begin{equation*} \label{Misner space}
	(M_\pm, g) := (\R \times S^1, \pm 2dtdx + tdx^2).
\end{equation*}
If we choose $\S := \{C\} \times S^1$ in $M_\pm$ for $C > 0$, the past Cauchy horizon is given by $H_-(\S) = \{0\} \times S^1$ and the future Cauchy horizon is empty. 
The Misner spacetime $M_+$ is illustrated in Figure \ref{fig: Misner}.
It is clear that $H_-(\S)$ is totally geodesic and we claim that the surface gravity of $H_-(\S)$ can be normalised to a non-zero constant. 
Choosing $V := \d_x$, one calculates that 
\begin{align*}
	g(\n_V V, \d_t) &= g(\n_{\d_x}(\d_x), \d_t) = - g(\d_x, \n_{\d_x} \d_t) = - \frac12 \d_t g(\d_x, \d_x) = - \frac12, \\
	g(\n_V V, \d_x) &= g(\n_{\d_x}(\d_x), \d_x) = \frac12 \d_x g(\d_x, \d_x) = 0,
\end{align*}
which implies that $\n_V V = \mp \frac12 V$ on $M_\pm$. 
This show that the surface gravity is constant and non-zero. 
Therefore our main results apply with $\H := H_-(\S)$.
\end{example}

Let us now describe three peculiar features of the characteristic Cauchy problem for initial data on compact Cauchy horizons that differs strongly from usual Cauchy problem.

\begin{remark}[No use of finite speed of propagation] \label{eq: no_finite_speed}
The d'Alembert operator on the Misner spacetime $M_+$ is $\Box = \d_t(t\d_t - 2\d_x)$. Consider the admissible wave equation
\begin{equation} \label{eq: no_finite_speed_eq}
	\Box u + u = 0.
\end{equation}
Given any initial data $u_0 \in C^\infty(\{0\} \times S^1)$, Theorem \ref{mainthm3} implies that there is  $u \in C^\infty([0, \infty) \times S^1)$ solving \eqref{eq: no_finite_speed_eq} such that $u|_{t = 0} = u_0$. 
Evaluating \eqref{eq: no_finite_speed_eq} at $t = 0$ gives $- 2\d_x(\d_tu|_{t = 0}) + \d_tu|_{t = 0} + u_0 = 0$. 
This implies that $\d_tu|_{t = 0}(x) = - \int_{-\infty}^0 e^s u_0(x - 2s)ds$, where we now consider $u_0$ as a periodic function on $\R$.
If $u_0 \geq 0$ and not identically zero, it follows that $\d_tu|_{t = 0}(x) < 0$ for \emph{every} $x \in S^1$! 
By continuity of $u$, we conclude that there is a $\delta > 0$ such that $(0, \delta) \times S^1 \subset \supp(u)$!
This holds, even if $u_0$ is identically zero on a piece of $S^1$. 
Strictly speaking, this does not contradict finite speed of propagation, since for any point $p \in \{0\} \times S^1$, the causal future is $J_+(p) = [0, \infty) \times S^1$. 
However, it shows that the finite speed of propagation does not tell us anything if initial data is specified on the Cauchy horizon. 
Proving our main results is therefore a \emph{non-local} problem and cannot be studied locally on one coordinate patch at the time.
\end{remark}

\begin{remark}[Solutions may blow up at the Cauchy horizon] \label{rmk: blowup}
Note that $u(t, \cdot) = \ln(t)$ satisfies $\Box u = 0$ on the Misner spacetimes $M_\pm$. 
Moreover, $u$ does not extend continuously to the horizon $\H = \{0\} \times S^1$. 
We conclude that there are solutions to $\Box u = 0$, defined on $D(\S)$ that \enquote{blow up} at the Cauchy horizon $\H$.
\end{remark}

\begin{remark}[Domain of dependence of a Cauchy horizon] \label{rmk: domain_dependence}
Consider again the Misner spacetime $M_+$. 
The vector field $-\frac t2\d_t + \d_x$ is lightlike and its integral curves will \enquote{spiral} around $\{0\} \times S^1$ for $t>0$ and $t < 0$ close to $0$ but never intersect $\{0\} \times \S$, compare with the light cones illustrated in Figure \ref{fig: Misner}. 
This implies that the domain of dependence of the Cauchy horizon $\{t_0\} \times \S$ is nothing but the Cauchy horizon itself. 
Therefore our main results control the solution \emph{outside the domain of dependence} of the Cauchy horizon where initial data is prescribed.
\end{remark}

The following counter examples also illustrate the difference between the problem studied here and the usual characteristic Cauchy problem.

\begin{counterexample}[Thm. \ref{mainthm2}, Thm. \ref{mainthm3} and Thm. \ref{mainthm4} fail for general wave operators]
 \label{coex: scalar}
Let us give two examples to show that neither existence nor uniqueness holds in Theorem \ref{mainthm3} and Theorem \ref{mainthm4} without the admissibility assumption. 
Consequently, also Theorem \ref{mainthm2} is false without the admissiblity assumption.
The d'Alembert operator on the Misner spacetime $M_+$ is given by
\[
	\Box = t\d_t\d_t - 2 \d_x\d_t + \d_t.
\]
\begin{itemize}
\item First consider the non-admissible operator $P:= \Box - \d_t$. Note that $u(t,x) = Ct$ solves $P u = 0$ for all $C\in \R$ and $u|_{\H_-} = 0$. We conclude that uniqueness does not hold for all wave operators.
\item Now consider the non-admissible operator $P := \Box - \d_t + 1$. Assume that $Pu = 0$ and let $u_0 := u|_{t = 0}$. 
Then $Pu|_\H = -2 \d_x \d_t u|_\H + u_0 = 0$.
But if we integrate this equation over $S^1$, we conclude that $\int_{S^1} u_0(s)ds = 0$, which is a strong restriction on the initial data. 
If $u_0$ is \emph{not} satisfying this restriction, there is no solution $u$ to $P u = 0$ with $u|_{t = 0} = u_0$. 
The conclusion is that existence of solution does not hold for all wave operators.
\end{itemize}
A natural wave operator is $\n^*\n$ acting on tensors, where $\n$ is the Levi-Civita connection with respect to $g$.
We already remarked in the introduction that we cannot choose $a = g$ in Defintion \ref{def: admissible}, since $g$ is not positive definite.
In fact, we show here that the conclusion of Theorem \ref{mainthm2} is false for this operator.
Consider the Misner spacetime $M_+$ and let $\n$ be the Levi-Civita connection on vector fields.
Using that $\n_{\d_x}\d_x = - \frac12\d_x + \frac12 t \d_t$, $\n_{\d_t}\d_x = \frac12 \d_t$ and $\n_{\d_t}\d_t = 0$, it follows that
\[
	\n^*\n = t\n_{\d_t}\n_{\d_t} - 2 \n_{\d_x} \n_{\d_t} + \n_{\d_t}
\]
and hence that
\[
	\n^*\n (t \d_t) = 0.
\]
Since $t\d_t|_{t = 0} = 0$, we have found a nontrivial solution to trivial initial data, despite the simple geometric nature of the operator $\n^*\n$. 
This shows that Theorem \ref{mainthm2} were false if we dropped the assumption that the wave operator is admissible in the sense of Definition \ref{def: admissible}.

\end{counterexample}

\noindent In the next example we modify the Misner spacetime in order to show that all four main results are wrong for compact Cauchy horizons with vanishing surface gravity.

\begin{counterexample}[All four main results fail for zero surface gravity] \label{counterex: non-zero}
Consider the spacetime 
\[
	(M, g) := (\R \times S^1, 2dtdx + t^4 dx^2)
\]
Let $\S := \{1\} \times S^1$. 
By a calculation similar to that for the Misner spacetime, it follows that $H_-(\S) = \{0\} \times S^1$ is smooth, compact and totally geodesic and the surface gravity vanishes. 
It is easy to check that if the surface gravity vanishes, one cannot normalise it to a non-zero constant.
The d'Alembert operator is given by $\Box = \d_t(t^4 \d_t - 2 \d_x)$.
Define $u \in C^\infty(M)$ by
\[
	u(t, x) := \begin{cases} 	e^{ - \frac1t}, \text{ if }t > 0, \\
						0, \text{ if } t \leq 0.
            \end{cases}
\]
It follows that 
\[
	\Box u - (2t +1)u = 0
\]
and $(\d_t)^n u|_{H_-(\S)} = 0$ for all $n \in \N$. 
This shows that we cannot drop the non-zero surface gravity assumption in neither Theorem \ref{mainthm1}, Theorem \ref{mainthm2}, Theorem \ref{mainthm3} nor Theorem \ref{mainthm4}.

It is of interest to note that also the existence statement fails in Theorem \ref{mainthm3} and Theorem \ref{mainthm4} even for the equation $\Box u = 0$ if one drops the non-zero surface gravity assumption. 
Consider the spacetime 
\[
	(M, g) := (\R \times (S^1)^2, 2dtdx + t^m dx^2 + dy^2)
\]
for an integer $m > 1$. 
Let $\S := \{1\} \times (S^1)^2$. 
Again, $H_-(\S) = \{0\} \times S^1$ is smooth, compact and totally geodesic and has vanishing surface gravity. 
The d'Alembert operator is given by $\Box = \d_t(t^m \d_t - 2 \d_x) - \d_y^2$.
Assume now that $\Box u = 0$ and let $u_0 := u|_{t = 0}$.
It follows that
\[
	\d_x(\d_t u(0, x,y)) + \d_y^2u_0 = 0,
\]
for all $(x, y) \in (S^1)^2$.
Integrating over $S^1$ in the $x$-variable gives
\[
	\d_y^2 \int_{S^1} u_0(0, s, y) ds = 0.
\]
This means that $y \mapsto \int_{S^1} u_0(0, s, y) ds$ is constant, which is a strong restriction on our choice of initial data $u_0$. 
This shows that also existence of solution fails in Theorem \ref{mainthm3} and Theorem \ref{mainthm4} if we drop the non-zero surface gravity assumption. 
\end{counterexample}

An important example of a \emph{vacuum} spacetime containing two compact Cauchy horizons with non-zero constant surface gravity is the Taub-NUT spacetime.

\begin{example}[The Taub-NUT-spacetime] \label{ex: Taub-NUT}
The Taub-NUT spacetime is defined by
\[
	(M, g) := (\R \times S^3, \pm 4 l dt \sigma_1 + 4 l^2 U(t) {\sigma_1}^2 + (t^2 + l^2)({\sigma_2}^2 + {\sigma_3}^2)),
\]
where 
\begin{align*}
	U(t) := \frac{(t_+-t)(t-t_-)}{t^2 + l^2}
\end{align*}
and where $t_\pm:= m \pm \sqrt{m^2+l^2}$, with $m \in \R$, $l > 0$ and $\sigma_1, \sigma_2, \sigma_3$ are orthonormal left invariant one-forms on $S^3$. 
Let us choose $\S := \{\tau \} \times S^3$ for some $\tau \in (t_-, t_+)$. Then $\S$ is an acausal closed hypersurface. 
The past and future Cauchy horizons of any such $\S$ are given by
\begin{align*}
	H_-(\S) &= \{t_-\} \times S^3, \\
	H_+(\S) &= \{t_+\} \times S^3,
\end{align*}
which are clearly compact. 
Since $(M, g)$ is Ricci flat, Remark \ref{rmk: null_energy_condition} implies that $H_-(\S)$ and $H_+(\S)$ are totally geodesic. 
Similarly to the Misner spacetimes, one calculates that both Cauchy horizons have constant non-zero surface gravity. 
Our main results therefore apply to the Taub-NUT spacetime with $\H := H_-(\S)$ or $\H := H_+(\S)$. 
\end{example}

Let us also illustrate an example where our main results apply and the generators of the Cauchy horizon do not close. 
This is interesting because the techniques of \cite{FRW1999} and \cite{R2000}, where certain wave equations are solved for initial data on compact Cauchy horizons, rely on the fact that the generators are closed (and on the fact that the initial data is invariant along the generators).
Our results apply without any assumptions on the generators, hence also to the next example.

\begin{example}[A compact Cauchy horizon with non-closed generators] \label{ex: non-closed_generators}
Consider the spacetime $M := \R \times \R^n$ with metric
\[
	g = 2dtdx^1 + t (dx^1)^2 + \sum_{j=2}^n (dx^j)^2.
\]
It is easy to compute that $g$ is flat.
Let $\Gamma$ be a grid on $\R^n$. 
By translation invariant of the metric, the metric is induced on the quotient $\R \times \R^n/\Gamma$.
If the quotient is compact, then $\H := \{t = 0\}$ is a totally geodesic compact smooth past Cauchy horizon.
By choosing $\Gamma$ with \enquote{irrational angles} one can produce examples such that any integral curve of the lighlike vector field $V := \d_{x^1}$ densely fill $\H$.
By the calculation in Example \ref{ex: Misner spacetime}, we know that
\[
	\n_V V = - \frac12 V,
\]
so the surface gravity is normalised to a non-zero constant.
\end{example}
\noindent
In \cite{MoncriefIsenberg2018}, Moncrief and Isenberg treat the case when the generators densely fill a 2-torus. 
From the above example it is clear that more general behaviour of the generators is possible. 
Example \ref{ex: non-closed_generators} satisfies however the assumptions of this paper and of \cite{PetersenRacz2018}.

\section{An energy estimate close to the Cauchy horizon} \label{sec: Energy}

Let us from now assume that $\H$ is a \emph{past} Cauchy horizon, in addition to Assumption \ref{as: Cauchy_horizon}.
The case when $\H$ is a future Cauchy horizon is obtained by a time reversal.
In this section we state and prove our energy estimate.

\subsection{A null time function} \label{sec: null time function}

The first step towards formulating the energy estimate is to express the metric in terms of a \enquote{null time function} on a small future neighbourhood of $\H$.
This is reminiscent of the locally defined \enquote{Gaussian null coordinates} used by Moncrief and Isenberg in \cite{MoncriefIsenberg1983}. 
As we have seen in Remark \ref{eq: no_finite_speed}, to solve wave equations with initial data on a compact Cauchy horizon is genuinely a \emph{non-local} problem. 
It is therefore convenient to avoid working with local coordinates.
Recall that $V$ denotes the nowhere vanishing lightlike vector field tangent to $\H$ such that 
\[
	\n_V V = V
\]
on $\H$.
Since $\H$ is totally geodesic, \cite{Kupeli1987}*{Thm. 30} implies that 
\[
	g(\n_X V, Y) = 0
\]
for all $X, Y \in T\H$. 
Therefore there is a smooth one-form $\o$ on $\H$ such that
\[
	\n_X V = \o(X) V
\]
for all $X \in T\H$.
Note that $\o(V) V = \n_V V = V$ and therefore $\omega(V) = 1$. 
Since $\o$ is nowhere vanishing, $E := \ker(\o)$ is a sub-vector bundle of $T\H$.
We get the splitting
\[
	T\H = \R V \oplus E.
\]
We will construct a null time function by flowing $\H$ along the geodesics emanating from a certain lightlike vector field transversal to $\H$.
This will give a diffeomorphism of a small future neighbourhood of $\H$ in $\H \sqcup D(\S)$ to $[0, \e) \times \H$.
Here $\H$ will correspond to $\H \times \{0\} \times \H$.
Let us therefore consider $E$ as a subbundle of $T(\{t\} \times \H)$ in the canonical way.

\begin{prop}[The null time function] \label{prop: time_function}
There is an open neighbourhood $U \subset \H \sqcup D(\S)$, containing $\H$ and a smooth function $t:U \to \R$ such that $(U, g|_U)$ is isometric to
\[
	[0, \epsilon) \times \H,
\]
where $t$ is the coordinate on $[0, \epsilon)$ and the metric takes the form
\begin{equation}\label{eq: the metric}
 	\begin{pmatrix}
 		0 & 1 & 0 \\
 		1 & - \psi & 0 \\
 		0 & 0 & \g
 	\end{pmatrix}
\end{equation}
with respect to the splitting $T([0, \epsilon) \times \H) = \R\d_t \oplus \R\grad(t) \oplus E$. 
Here $\psi \in C^\infty([0, \epsilon) \times \H)$ is such that $\d_t\psi(t, \cdot), \psi(t, \cdot) > 0$ for all $t \in (0, \epsilon)$ and $\d_t \psi(0, \cdot) = 2$ and $\psi(0, \cdot) = 0$. 
The induced (time-dependent) metric $\g$ on the vector bundle $E$ is positive definite. 
Moreover, $\d_t$ is a lightlike geodesic vector field and $\{t\} \times \H$ are spacelike Cauchy hypersurfaces in $D(\S)$ for any $t \in (0, \epsilon)$.
\end{prop}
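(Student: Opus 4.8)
The plan is to obtain $t$ by flowing $\H$ along the null geodesics emanating from a canonically normalised transversal null field --- a global version of the (locally defined) Gaussian null coordinates of Moncrief--Isenberg. First I would fix, along $\H$, the unique lightlike vector field $L$ satisfying $g(L, X) = 0$ for all $X \in E$ and $g(L, V) = -1$. It exists and is unique: any $L$ with $g(L,\cdot)|_E = 0$ and $g(L, V) = 0$ lies in $(T\H)^\perp = \R V$, so the solutions of $g(L,\cdot)|_E = 0$, $g(L,V) = -1$ form an affine line $\{L_0 + sV : s \in \R\}$, and the single equation $g(L_0 + sV, L_0 + sV) = g(L_0, L_0) - 2s = 0$ determines $s$. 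One checks that $L$ is then transversal to $\H$ (otherwise $g(L,V) = 0$) and inward-pointing; fixing the sign in $g(L, V) = -1$ is precisely what, combined with $\n_V V = V$, will produce the constant $2$ in $\d_t\psi(0,\cdot) = 2$. Now set $\Phi(t, p) := \exp_p(t L_p)$ on $[0, \epsilon) \times \H$. Since $d\Phi|_{(0,p)}$ is an isomorphism for every $p$ and $\H$ is compact, after shrinking $\epsilon$ the map $\Phi$ is a diffeomorphism onto an open neighbourhood $U \subseteq \H \sqcup D(\S)$ of $\H$ (compactness supplying injectivity and a uniform $\epsilon$, and $L$ inward-pointing ensuring $\{t\}\times\H \subset D(\S)$ for $t > 0$). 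With $t$ the pulled-back coordinate, $\d_t$ is the velocity field of these geodesics, so $\n_{\d_t}\d_t = 0$, and $g(\d_t,\d_t)$ is constant along each geodesic, hence $\equiv g(L,L) = 0$.

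The block form of the metric then follows from short connection identities. For any $t$-independent vector field $\widetilde X$ on $[0,\epsilon)\times\H$, $\d_t g(\d_t, \widetilde X) = g(\n_{\d_t}\d_t, \widetilde X) + g(\d_t, \n_{\widetilde X}\d_t) = \tfrac12\widetilde X\big(g(\d_t,\d_t)\big) = 0$ (using $\n_{\d_t}\d_t = 0$ and $[\d_t, \widetilde X] = 0$), so $g(\d_t, \widetilde X)$ is independent of $t$ and equals its restriction $g(L, \widetilde X|_\H)$; in particular $g(\d_t, E) = 0$, whereas $g(\d_t, \grad t) = dt(\d_t) = 1$. Since $dt$ annihilates $T(\{t\}\times\H) \supseteq E$, also $g(\grad t, E) = 0$, and putting $\psi := -g(\grad t, \grad t)$ yields exactly \eqref{eq: the metric}; the decomposition $\R\d_t \oplus \R\grad t \oplus E$ is legitimate because the $\d_t$--$\grad t$ block has determinant $-1$ and $E$ is $g$-orthogonal to it, with $\g := g|_E$ positive definite at $t = 0$ --- $E$ being a complement of the radical $\R V$ of the (positive semidefinite) degenerate metric on the lightlike hypersurface $\H$ --- hence positive definite on all of $[0,\epsilon)\times\H$ after another shrinking of $\epsilon$, by compactness.

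The delicate point, which I expect to be the main obstacle, is the behaviour of $\psi$ at $t = 0$. At $t = 0$ one has $\grad t \in (T\H)^\perp = \R V$, so $\psi(0,\cdot) = -g(\grad t, \grad t) = 0$. For $\d_t\psi(0,\cdot)$ I would pass to coordinates $(t,y)$ with $y$ a chart on $\H$, where $g$ has the matrix $\left(\begin{smallmatrix} 0 & -\o_b \\ -\o_a & h_{ab}(t)\end{smallmatrix}\right)$ --- here $-\o_a = g(\d_t,\d_a)$ is independent of $t$ by the identity above, and $h(t)$ is the induced metric on $\{t\}\times\H$, with $h(0)$ degenerate of radical $\R V$. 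Inverting gives $\psi = -g^{tt} = (h^{ab}\o_a\o_b)^{-1}$, so the point is the rate at which $h^{ab}$ blows up along $\R V$. Using $\d_t\big(h_{ab}V^aV^b\big)\big|_{t=0} = 2g(\n_V L, V) = -2g(L,V) = 2$, where $\n_V V = V$ and the normalisation $g(L,V) = -1$ enter, together with $\o(V) = 1$, a Schur-complement expansion yields $h^{ab}\o_a\o_b = \tfrac{1}{2t} + O(1)$, i.e.\ $\psi(t,\cdot) = 2t + O(t^2)$; thus $\psi(0,\cdot) = 0$ and $\d_t\psi(0,\cdot) = 2$. Positivity is then immediate: $\psi(t,\cdot) = (h^{ab}(t)\o_a\o_b)^{-1} > 0$ for $t \in (0,\epsilon)$ since $h(t)$ is positive definite and $\o \neq 0$, and since $\d_t\psi(0,\cdot) = 2 > 0$ on the compact set $\H$, also $\d_t\psi > 0$ on $(0,\epsilon)\times\H$ after shrinking $\epsilon$ once more. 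Finally, each $\{t\}\times\H$ with $t \in (0,\epsilon)$ is compact and spacelike --- its induced metric $h(t)$ is positive definite, $h(0)$ being positive semidefinite with one-dimensional kernel on which $\d_t h(0)$ is strictly positive --- and lies in $D(\S)$, hence is a Cauchy hypersurface of $D(\S)$, being a compact spacelike hypersurface in a globally hyperbolic region; this completes the proof. Keeping all signs consistent, so that $\d_t\psi(0,\cdot)$ comes out exactly $+2$ and the slices are genuinely spacelike, is the part needing the most care; the geodesic construction and the block form are routine.
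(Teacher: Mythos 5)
Your construction coincides in essentials with the paper's: the same normalised transversal null field $L$ (characterised along $\H$ by $g(L,E)=0$, $g(L,V)=-1$, $g(L,L)=0$), the same geodesic flow $\exp_p(tL)$, the same derivation of the block form, and your Schur-complement computation of $\d_t\psi(0,\cdot)=2$ is a correct (if more roundabout) substitute for the paper's direct covariant computation $\d_t\psi(0,\cdot)=-2g(\n_VV,\d_t)|_{t=0}=-2g(V,L)=2$. The genuine gap is the sentence \enquote{one checks that $L$ is \dots inward-pointing}. Nothing in the algebraic normalisation $g(L,V)=-1$ determines on which side of $\H$ the vector $L$ points: two lightlike vectors with $g(L,V)<0$ lie in the same causal cone, so $L$ points towards $D(\S)$ (the future side of the past horizon) exactly when $V$ is future directed, and the hypotheses do not hand you this. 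Assumption \ref{as: Cauchy_horizon} only gives $\n_VV=\kappa V$ with $\kappa\neq 0$, and normalising $\kappa=1$ may involve rescaling $V$ by a negative constant; if $V$ were past directed, your $L$ would be past directed too, the geodesics $\exp_p(tL)$, $t>0$, would leave $\H\sqcup D(\S)$ altogether, $U$ would not be contained in $\H\sqcup D(\S)$, and the slices $\{t\}\times\H$ would not lie in $D(\S)$. So this is not sign bookkeeping but the causal-theoretic core of the proposition.

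The paper closes this gap in two steps, both absent from your proposal. First, $V$ is shown to be future directed: with $\kappa=+1$ the generators, reparametrised as geodesics, are complete in the positive and incomplete in the negative direction, and \cite{Larsson2014}*{Lem. 1.6} then forces $V$ to be future pointing; this is where the sign of the surface gravity and the standing convention that $\H$ is a \emph{past} horizon enter. Second, even for a future pointing transversal $L$ one must rule out that the geodesics enter $D(\S)$ for small \emph{negative} parameter rather than positive: the paper argues by contradiction, producing a past directed timelike curve from $x\in\H$ into $D(\S)$, extending it to reach $\S$ by global hyperbolicity, and contradicting acausality of $\S$ via $x\in I_+(\S)$ versus $\H\subset I_-(\S)$ (O'Neill). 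You need some version of both arguments. A smaller point: \enquote{a compact spacelike hypersurface in the globally hyperbolic $D(\S)$ is a Cauchy hypersurface} is a theorem of Budic--Isenberg--Lindblom--Yasskin (\cite{BILY1978}), not an immediate fact, and should be invoked as such. The remainder of your argument (existence, uniqueness and smoothness of $L$; injectivity of the flow for small $\e$; the block form; $\psi(0,\cdot)=0$; the expansion $\psi=2t+O(t^2)$; positive definiteness of $\g$ and of the slice metrics for small $t>0$) is correct.
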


\noindent
Note that the null time function that we have constructed is a natural generalisation of the null time function that comes with the Misner spacetime.
In other words, the proposition establishes a foliation reminiscent of Figure \ref{fig: Misner} for any Cauchy horizon satisfying our assumptions.

Before proving Proposition \ref{prop: time_function}, let us first discuss the statement in more detail.
Since $\d_t$ is lightlike, and $g(\grad(t), \d_t) = 1$, it follows that $\d_t$ and $\grad(t)$ are linearly independent.
This explains the splitting $T([0, \epsilon) \times \H) = \R\d_t \oplus \R\grad(t) \oplus E$.
Define 
\[
	\H_t  := \{t\} \times \H \subset [0, \e) \times \H,
\]
for all $t \in [0, \e)$. 
Proposition \ref{prop: time_function} gives a canonical diffeomorphism between the Cauchy horizon $\H = \H_0$ and the Cauchy hypersurfaces $\H_t$ for $t \in (0, \epsilon)$. 
As already explained, we consider $E \subset T\H_t$ as a subbundle in the canonical way. 
We can also extend $V$ as a smooth vector field with $V \in T\H_t$ for all $t \in [0, \e)$ by demanding that $[V, \d_t] = 0$.
It follows that 
\[
	T\H_t = \R V \otimes E|_{\H_t}
\]
for all $t \in [0, \e)$.
For the proof of the energy estimate, another way to decompose $T\H_t$ turns out to be useful. 
By Proposition \ref{prop: time_function}, it follows that $-\psi = g(\grad(t), \grad(t))$, which implies that $\grad(t) = Z - \psi\d_t$, for a certain smooth vector field $Z$ such that $Z|_{\H_t} \in T\H_t$ for all $t \in [0, \epsilon)$ and $Z|_{\H} = -V$. 
Let from now on $Z$ denote this vector field. 
It follows that
\begin{equation*}
	T\H_t = \R Z \otimes E|_{\H_t}
\end{equation*}
for all $t \in [0, \e)$ after shrinking $\e$ if necessary. 
We now prove Proposition \ref{prop: time_function}.

\begin{proof}
Since $\n_V V = V$, one can reparametrize the integral curves of $V$ to geodesics tangent to $V$.
The geodesics are complete in the positive direction and incomplete in the negative direction. 
Therefore for example \cite{Larsson2014}*{Lem. 1.6} implies that $V$ is future directed.
Since $E \subset TM|_\H$ is a Riemannian subbundle, it follows that $T M|_\H = E \oplus E^\perp$. 
Now, since $M$ is time-oriented, there is a nowhere vanishing timelike vector field $T$ on $M$. 
Since $\H$ is lightlike, $T|_\H$ must be transversal to $\H$.
Let us now project $T|_\H$ onto $E^\perp$, we call the projection $\bar T|_\H$.
$\bar T|_\H$ must be nowhere vanishing and everywhere linearly independent of $V$, for otherwise we get a contradiction to the fact that $T|_\H$ is transversal to $\H$.
Since $E^\perp$ is a subbundle of rank $2$ and $V \in E^ \perp$, this shows that $E^\perp$ is a trivial bundle spanned by $\tilde T|_\H$ and $V$.
Since $E^\perp$ is a Lorentzian subbundle, we conclude, there is a unique future pointing nowhere vanishing lightlike vector field $L \in E^\perp$ such that $g(L, V) = - 1$. 

Consider now the map
\begin{align*}
	f_s: \H &\to M, \\*
	x &\mapsto \exp_x(Ls),
\end{align*}
for those $s \in \R$ where this map is defined. 
By compactness of $\H$, there is an $\epsilon > 0$ such that $f_s$ is defined for all $s \in [0, \epsilon)$ and such that the map
\begin{align*}
	\hat f: [0, \epsilon) \times \H &\to M, \\
	(s, x) &\mapsto f_s(x),
\end{align*}
is a diffeomorphism (of manifolds with boundary) onto its image. 
Let us show that we can shrink $\e$ such that $\hat f((0, \epsilon) \times \H) \subset D(\S)$. 
For each $x \in \H$, $s \mapsto f_s(x)$ is a future pointing lightlike curve. Since $\H \sqcup D(\S)$ is a smooth manifold with boundary $\H$ and $L$ is transversal on $\H$, either $f_s(x) \in D(\S)$ for all negative $s$ close to zero or for all positive $s$ close to zero. 
Assume to reach a contradiction that $f_s(x) \in D(\S)$ for all negative $s$ close to $0$. 
It follows by continuity that there exists a past directed timelike curve $\gamma$ such that $\gamma(0) = x$ and $\gamma(s') \in D(\S)$ for all negative $s'$ close to $0$. 
Since $D(\S)$ is globally hyperbolic, we can extend $\gamma$ to a past directed timelike curve reaching $\S$. 
In other words, $x \in I_+(\S)$. 
On the other hand, by \cite{O'Neill1983}*{Prop. 53 (1)}, $x \in \H \subset I_-(\S)$. 
This contradicts acausality of $\S$, and we conclude that $f_s(x) \in D(\S)$ for small positive $s$. 
It follows by compactness of $\H$ that we can shrink $\e$ so that $\hat f((0, \e) \times \H) \subset D(\S)$. 
Define $U := \hat f([0, \epsilon) \times \H)$. 

Denote the coordinate on $[0, \epsilon)$ by $t$. 
By construction we have $\n_{\d_t} \d_t = 0$, i.e.\ $\d_t$ is a geodesic vector field. 
Since $\d_t|_\H = L$, $\d_t$ is a lightlike vector field.
Let now $X \in E \subset T\H$ and extend $X$ to $T([0, \epsilon) \times \H)$ by $(0, X)$, which we still denote by $X$. 
It follows that $[X, \d_t] = 0$. 
Moreover, it follows that $\d_t g(\d_t, X) = g(\n_{\d_t}\d_t, X) + g(\d_t, \n_{\d_t}X) = g(\d_t, \n_X\d_t) = \frac12 \d_X g(\d_t, \d_t) = 0$. 
Since $g(\d_t, X)|_{\{0\} \times \H} = g(L, X) = 0$, we conclude that $g(\d_t, X) = 0$ everywhere. 
Since $\grad(t)|_{\{0\} \times \H} = - V$, we note that $\d_t$ and $\grad(t)$ are linearly independent on $[0, \epsilon) \times \H$, making $\epsilon$ even smaller if necessary. 
Moreover, $g(\grad(t), X) = dt(X) = \d_Xt = 0$ and $g(\grad(t), \d_t) = 1$ everywhere. 
This implies that $X, \d_t$ and $\grad(t)$ are everywhere linearly independent.
We have shown up to now that
\begin{align*}
	g(\d_t, \d_t) &= 0,  \\
	g(\d_t, \grad(t)) &= 1, \\
	g(\d_t, X) &= 0, \\
	g(\grad(t), X) &= 0.
\end{align*}
We define $\psi := -g(\grad(t), \grad(t))$, which completes the form of the metric stated in \eqref{eq: the metric}. 
Since $\grad(t)|_{\{0\}\times \H} = -V$ it follows that $\psi(0, \cdot) = - g(-V, -V) = 0$. 
In order to calculate $\d_t \psi(0, \cdot)$, first extend the vector field $V$ to $T([0, \epsilon) \times \H)$ by $(0, V)$, still denoting it $V$. 
We have $[V, \d_t] = 0$. It follows that $\d_t \psi(0, \cdot) = - 2 g(\n_{\d_t}\grad(t), - V)|_{\{0\} \times \H} = - 2\d_t g(\grad(t), -V)|_{\{0\} \times \H} + 2 g(-V, - \n_{\d_t} V)|_{\{0\} \times \H} =  2 g(V, \n_V \d_t)|_{\{0\} \times \H} = - 2 g( \n_V V, \partial_t)|_{\{0\} \times \H} = 2$. 
Shrinking $\epsilon$ if necessary, we can make sure that $\d_t \psi(t, \cdot) > 0$ for all $t \in [0, \epsilon)$ and $\psi(t, \cdot) > 0$ for all $t \in (0, \epsilon)$. 

Since $\psi > 0$, we have made sure that $\grad(t)$ is timelike on $(0, \epsilon) \times \H$, which implies that hypersurfaces $\{t\} \times \H$ are compact spacelike hypersurfaces in the globally hyperbolic spacetime $D(\S)$, for all $t \in (0, \epsilon)$. 
By \cite{BILY1978}*{Thm. 1} (the statement is given in $n = 3$, but the proof goes through in any dimension) it follows that the level sets $\{t\} \times \H$ are Cauchy hypersurfaces, for all $t \in (0, \epsilon)$.
\end{proof}

\subsection{Stating the energy estimate} \label{sec: Energy_estimate}

For each $s \in \R$, let $\norm{\cdot}_{s}$ denote a Sobolev norm for the Sobolev space $H^s(\H)$. 
Since $\H$ is a compact smooth manfold, all Sobolev norms are equivalent. 
We are formulating the energy estimate close to $\H$ in terms of the null time function $t$ given by Proposition \ref{prop: time_function}.
We use the notational convention $\N := \{0,1, \hdots \}$. 
Let from now on $\n_t := \n_{\d_t}$.

\begin{thm} \label{thm: Energy1}
Let $P$ be a wave operator. For any $m \in \N$, there exists a constant $D_m > 0$ such that for all $[t_0, t_1] \subset (0, \epsilon)$, we have
\begin{align*}
	\norm{u(t_1, \cdot)}_{2m+1} + \sqrt{t_1}\norm{\n_t u(t_1, \cdot)}_{2m} &\leq D_m \left(\frac{{t_1}}{{t_0}}\right)^{D_m}\left(\norm{u(t_0, \cdot)}_{2m+1} + \sqrt{t_0}\norm{\n_t u(t_0, \cdot)}_{2m}\right) \\*
	& \quad + D_m{t_1}^{D_m}\int_{t_0}^{t_1}\frac{\norm{P u(t, \cdot)}_{2m}}{t^{D_m + 1/2}}dt
\end{align*}
for all $u \in C^\infty([t_0, t_1] \times \H, F)$.
\end{thm}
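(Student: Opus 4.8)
The plan is to derive the estimate from a first-order (symmetric hyperbolic) reduction of the wave equation adapted to the null time function $t$ from Proposition \ref{prop: time_function}, combined with a Grönwall argument in which one keeps careful track of the degenerate factor $\psi$, which vanishes to first order at $\H$ (recall $\psi(0,\cdot)=0$, $\d_t\psi(0,\cdot)=2$, so $\psi(t,\cdot)\sim 2t$ near $\H$). First I would expand $Pu$ in the frame $\R\d_t\oplus\R Z\oplus E$: using the metric form \eqref{eq: the metric}, the inverse metric pairs $\d_t$ with $Z$, so the principal part of $P$ takes the schematic form $Pu = -2\n_t\n_Z u + \psi\,\n_t\n_t u + \Delta_E u + (\text{first order})$, where $\Delta_E$ is a $t$-dependent elliptic (Laplace-type) operator on the fibres of $E$, built from $\g$. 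The key structural point is that the ``dangerous'' second derivative $\n_t\n_t u$ comes weighted by $\psi\sim 2t$, so it degenerates exactly at the rate that allows the $\sqrt{t}\,\|\n_t u\|$ term in the energy.

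Next I would define, for each integer $m$, the energy
\[
	\E_m(t) := \norm{u(t,\cdot)}_{2m+1}^2 + t\,\norm{\n_t u(t,\cdot)}_{2m}^2,
\]
or rather a frame-adapted variant built by applying $2m$ tangential derivatives (in $Z$ and $E$ directions, i.e. derivatives along $\H_t$) to $u$ and forming the natural energy density of the wave equation for the differentiated quantity; equivalence of all Sobolev norms on the compact manifold $\H$ lets me pass freely between this and $\norm{\cdot}_{2m+1}$, $\norm{\cdot}_{2m}$. Differentiating $\E_m$ in $t$, integrating by parts on each time slice $\H_t$ (which is a closed manifold, so no boundary terms), and using the equation $Pu = f$ to substitute for $\psi\,\n_t\n_t u$, one obtains an inequality of the form
\[
	\der{}{t}\E_m(t) \leq \frac{C_m}{t}\,\E_m(t) + \frac{C_m}{\sqrt{t}}\,\norm{f(t,\cdot)}_{2m}\sqrt{\E_m(t)},
\]
where the $1/t$ singularity in the first term arises from three sources: the factor $\d_t\psi/\psi \sim 1/t$ produced when the weight $t$ in the energy is differentiated and balanced against the $\psi\,\n_t\n_t$ term; the commutators of the $2m$ tangential derivatives with the $t$-dependent coefficients of $P$ (in particular with $\psi$ and $\g$); and the zeroth/first order terms of $P$. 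Crucially the sign works out: the ``bad'' boundary-type contribution from integrating $-2\n_t\n_Z u$ against $\n_t u$, after integration by parts in the $Z$-direction, combines with $\d_t$ of the weighted kinetic term to leave only a $C_m/t$ loss rather than a worse singularity — this is where the non-zero surface gravity (encoded in $\d_t\psi(0,\cdot)=2\neq 0$) is used, and is the analytic heart of the matter.

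Finally I would divide by $\sqrt{\E_m}$ to get $\der{}{t}\sqrt{\E_m} \leq \tfrac{C_m}{2t}\sqrt{\E_m} + \tfrac{C_m}{2\sqrt t}\norm{f(t,\cdot)}_{2m}$, and integrate this linear ODE inequality from $t_0$ to $t_1$ via the integrating factor $t^{-C_m/2}$. This yields
\[
	\sqrt{\E_m(t_1)} \leq \left(\frac{t_1}{t_0}\right)^{C_m/2}\sqrt{\E_m(t_0)} + \frac{C_m}{2}\,t_1^{C_m/2}\int_{t_0}^{t_1}\frac{\norm{f(t,\cdot)}_{2m}}{t^{C_m/2+1/2}}\,dt,
\]
and taking square roots in the definition of $\E_m$ (using $\sqrt{a+b}\leq\sqrt a+\sqrt b$ and relabelling $D_m$) gives exactly the asserted inequality with $f = Pu$. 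I expect the main obstacle to be the bookkeeping in the second step: one must show that after commuting $2m$ tangential derivatives through $P$ and integrating by parts, every error term is controlled by $C_m\E_m(t)/t$ with \emph{no} term of order $t^{-3/2}$ or worse surviving; this requires using the precise vanishing order of $\psi$ and the smoothness of $\g$, $Z$, and of the lower-order coefficients of $P$ up to $\H$, and is the place where one genuinely needs $\kappa\neq 0$ — with vanishing surface gravity $\psi$ would vanish to higher order and the balance that produces the clean $1/t$ weight would break down (consistent with Counter Example \ref{counterex: non-zero}).
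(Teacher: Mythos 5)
Your proposal follows essentially the same route as the paper's proof: a $t$-weighted energy equivalent to $\norm{u(t,\cdot)}_{2m+1}+\sqrt t\,\norm{\n_t u(t,\cdot)}_{2m}$, a differential inequality $\frac{d}{dt}\E^{2m}(u,t)\leq \frac Ct \E^{2m}(u,t)+\frac{C}{\sqrt t}\norm{Pu}_{2m}\sqrt{\E^{2m}(u,t)}$ obtained by slice-wise integration by parts and commutator estimates exploiting $\psi\sim 2t$, and then integration with the factor $t^{-C}$, exactly as in Lemma \ref{le: Sobolev_vs_energy}, Lemma \ref{lemma: Q-estimate} and the concluding argument of Section \ref{sec: Energy_proof}. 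The one caveat is that your first candidate energy $\norm{u}_{2m+1}^2+t\norm{\n_t u}_{2m}^2$ cannot be differentiated directly (it produces the uncontrollable top-order cross term $\langle \n_t u,u\rangle_{2m+1}$); the paper implements precisely your ``frame-adapted variant'' as the five-term energy $\E^{2m}$ built from $2\n_Z u-\psi\n_t u$, the $\sqrt\psi$-weighted and unweighted tangential gradients and $(1+\Delta)^m$-based Sobolev norms, proving the inequality first for the model operator $Q=\n_t(\psi\n_t-2\n_Z)+\bar\Delta$ and absorbing $P-Q$ as a first-order perturbation.
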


\noindent Note that we are not allowed to put $t_0 = 0$ in Theorem \ref{thm: Energy1}.

\begin{remark}
When restricting to $t_0 \geq \delta > 0$, our energy estimate is essentially equivalent to the classical ones, like for example \cite{BaerWafo2014}*{Thm. 8}. 
\end{remark}

\begin{remark}
The natural \enquote{energy} in Theorem \ref{thm: Energy1} has the form 
\[
	\norm{u(t, \cdot)}_{2m+1} + \sqrt{t}\norm{\n_t u(t, \cdot)}_{2m}.
\] 
This means that the energy can control the value of the function at $t = 0$ but \emph{not} the first time derivative. 
This is actually what one would expect, since we only specify the value and not the first derivative at the Cauchy horizon in Theorem \ref{mainthm2}, Theorem \ref{mainthm3} and Theorem \ref{mainthm4}. 
\end{remark}

\subsection{Preparations for the proof}
\subsubsection{The Sobolev spaces}

One main problem in proving energy estimates for wave equations close to a Cauchy horizon, is that the horizon is lightlike. 
The metric degenerates on the horizon and there is no natural Sobolev norm coming from the geometry. 
We overcome this by introducing a certain Riemannian metric on $\H$. 
For each $p \in \H$ and $X, Y \in T_p\H$, we define
\begin{equation}
	\sigma(X, Y) := g(X, Y) + g(X, \d_t) g(Y, \d_t). \label{eq: sigma}
\end{equation}
It follows immediately that $\sigma|_{E \times E} = \g|_\H$. 
Since $\g|_\H$ is positive definite, $\sigma(V, V) = 1$ and ${\sigma(X, V) = 0}$ for all $X \in E$, we conclude that $\sigma$ is a Riemannian metric on $T\H$. 
By using the diffeomorphism $\H \cong \{t\} \times \H = \H_t$ given by Proposition \ref{prop: time_function}, we may consider $\sigma$ as a Riemannian metric on $\H_t$ for each $t \in [0, \e)$.

Let us denote the Levi-Civita connection with respect to $\sigma$ by $\hat \n$.
Using the connection $\n$ on $F$, we define the connection-Laplace operator $\Delta$ on $\H_t$ with respect to $\sigma$ by
\[
	\Delta h := - \sum_{i, j=1}^n \sigma^{ij}\left( \n_{e_i}\n_{e_j} - \n_{\hat \n_{e_i}e_j} \right)h,
\]
where $h \in C^\infty(\H_t, F|_{\H_t})$ and $e_1, \hdots, e_n$ is some local frame of $T\H_t$ for some fixed $t \in [0, \e)$. 

Theorem \ref{thm: Energy1} is formulated in terms of any Sobolev norm, but it will be convenient to use certain Sobolev norms that are constructed from $\sigma$.
For any $f_1, f_2 \in C^\infty([0, \epsilon) \times \H, F)$, define the $L^2$-inner products on $\H_t$ by
\[
	\langle f_1, f_2\rangle_{L^2}(t) := \langle f_1(t, \cdot), f_2(t, \cdot) \rangle_{L^2} := \int_{\H_t} a(f_1(t, \cdot), f_2(t, \cdot))d\mu_\sigma,
\]
where $d\mu_\sigma$ is the volume element associated to $\sigma$. 
The $L^2$-norm is defined as $\norm{f}_{L^2}(t) := \sqrt{\langle f, f\rangle_{L^2}(t)}$. 
For any $s \in \R$, we define the Sobolev inner products
\[
	\langle f_1, f_2 \rangle_{s}(t) := \langle (1 +\Delta)^{s/2}f_1, (1 +\Delta)^{s/2}f_2 \rangle_{L^2}(t)
\]
and the Sobolev norms 
\begin{equation} \label{eq: Sobolev_sigma}
	\norm{f}_{s}(t) := \sqrt{\langle f, f \rangle_{s}(t)}. 
\end{equation}

\subsubsection{Some estimates}

Let us prove two important but straightforward lemmas. 
We say that a linear differential operator $D$ \emph{differentiates in $\H_t$-direction} if for all $u \in C^\infty([0, \e) \times \H, F)$, $Du|_{\H_t}$ only depends on $u|_{\H_t}$ and \emph{not} on the $\d_t$-derivative of $u$. 
For example, $\Delta$ differentiates only in $\H_t$-direction.

\begin{lemma} \label{le: commutators1}
Let $m \in \N$ and let $D$ be a differential operator acting on sections of $F$ and differentiating in $\H_t$-direction of order $k \in \N$. Then the differential operators $[(1+\Delta)^m, D]$ and $[(1+\Delta)^m, \n_t]$ are differentiating in $\H_t$-direction and are of order $2m+k-1$ and $2m$ respectively.
\end{lemma}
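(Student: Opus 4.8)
The plan is to prove both commutator identities by induction on $m$, treating the two cases in parallel since they share the same mechanism. The starting observation is the elementary identity $[(1+\Delta)^m, D] = \sum_{j=0}^{m-1}(1+\Delta)^j [1+\Delta, D](1+\Delta)^{m-1-j}$, which reduces everything to understanding the single commutator $[\Delta, D] = [1+\Delta, D]$. Here $\Delta$ is the connection-Laplacian on $\H_t$ with respect to $\sigma$, a second-order operator differentiating purely in $\H_t$-direction; and $D$ is a $k$-th order operator also differentiating in $\H_t$-direction. The commutator of a second-order operator with a $k$-th order operator, both acting on sections of the same bundle over the fixed Riemannian manifold $(\H_t,\sigma)$, is a differential operator of order at most $2 + k - 1 = k+1$, and it again differentiates only in $\H_t$-direction because neither $\Delta$ nor $D$ involves $\n_t$ and the bracket of two $\H_t$-direction operators never produces a $\d_t$-derivative. (One can see this concretely in a local frame $e_1,\dots,e_n$ of $T\H_t$: writing both operators in terms of $\n_{e_i}$, the leading symbols cancel in the commutator.)

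First I would record the base case $m=1$: $[1+\Delta, D]$ differentiates in $\H_t$-direction and has order $k+1 = 2\cdot 1 + k - 1$, and likewise $[1+\Delta,\n_t]$ has order $2 = 2\cdot 1$ — note that although $\n_t$ does differentiate in the transversal direction, the bracket $[\Delta,\n_t]$ does not: expanding $\Delta$ in a frame and using that $[\d_t, e_i]$ is again a section of $T\H_t$ (by our construction $[V,\d_t]=0$ and $E$ is spanned by $\d_t$-invariant fields, while $[\grad(t),\d_t]$ lies in $T\H_t$ as well since $g(\grad(t),\d_t)=1$ is constant), together with curvature terms of $\n$, one finds that all the resulting terms are differentiations in $\H_t$-direction of order $\le 2$. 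Then for the inductive step I would feed the summation formula above: each summand $(1+\Delta)^j[1+\Delta,D](1+\Delta)^{m-1-j}$ is a composition of operators differentiating in $\H_t$-direction, of orders $2j$, $k+1$, and $2(m-1-j)$ respectively, hence of total order $2j + (k+1) + 2(m-1-j) = 2m + k - 1$; summing over $j$ preserves this bound. The argument for $[(1+\Delta)^m,\n_t]$ is identical with $k+1$ replaced by $2$, giving total order $2m$, and the $\H_t$-direction property is preserved because it is stable under composition and finite sums.

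The only genuine subtlety — and the step I expect to demand the most care — is verifying that $[\Delta,\n_t]$ really does differentiate only in $\H_t$-direction, i.e. that no $\d_t^2$ or mixed $\d_t e_i$ term survives. This rests on two facts established in Proposition~\ref{prop: time_function} and the surrounding discussion: that $\d_t$ is a lightlike geodesic vector field with $\n_{\d_t}\d_t = 0$, and that one can choose, locally around any point, a frame of $T\H_t$ consisting of vector fields commuting with $\d_t$ (namely $V$, which satisfies $[V,\d_t]=0$ by construction, together with a local frame of $E$ extended $\d_t$-invariantly, and then $Z$ or $\grad(t)$ in place of $V$ if desired). Writing $\Delta = -\sum \sigma^{ij}(\n_{e_i}\n_{e_j} - \n_{\hat\n_{e_i}e_j})$ in such a frame and commuting through $\n_t$, every term where $\n_t$ would hit $e_i$ vanishes, the derivatives of $\sigma^{ij}$ and of $\hat\n_{e_i}e_j$ in the $\d_t$-direction are smooth coefficients (not derivative operators), and the commutators $[\n_t, \n_{e_i}] = \n_{[\d_t,e_i]} + R^F(\d_t, e_i) = R^F(\d_t,e_i)$ are zeroth order. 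Hence $[\Delta,\n_t]$ is a second-order operator built entirely from $\n_{e_i}$, $\n_{e_i}\n_{e_j}$, and zeroth-order curvature/coefficient terms, which is precisely the assertion. Once this is in hand, the induction is routine bookkeeping.
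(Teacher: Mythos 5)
Your proposal is correct and follows essentially the same route as the paper: the drop of one order in $[(1+\Delta)^m,D]$ rests on the fact that the principal symbol of $\Delta$ is the scalar $-\sigma(\xi,\xi)\,\id_F$ and hence commutes with the symbol of $D$ (this is exactly what your frame computation of the cancelling leading terms encodes, and it is needed because for bundle-valued operators a commutator does not drop order in general), while the claim for $[(1+\Delta)^m,\n_t]$ rests on $[\n_t,\n_X]=R^\n(\d_t,X)+\n_{[\d_t,X]}$ together with $[\d_t,X]\in T\H_t$ whenever $X\in T\H_t$; your telescoping identity merely makes explicit what the paper calls \enquote{repeating this argument}. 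The only slip is the parenthetical assertion that $[\grad(t),\d_t]\in T\H_t$ and that $\grad(t)$ could be used in a frame of $T\H_t$: since $\grad(t)=Z-\psi\d_t$ is not tangent to $\H_t$ and $[\d_t,\grad(t)]\,t=-\d_t\psi\neq 0$, this is false, but it is a side remark that your argument nowhere uses, since the frame $V,e_2,\dots,e_n$ (or $Z$ in place of $V$) suffices.
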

\begin{proof}
Since the principal symbol of $\Delta$ is given by $-\sigma(\xi, \xi) \id_F$, it commutes with any other principal symbol. 
Hence the principal symbol of $(1+\Delta)^m$ commutes with the principal symbol of $D$ which proves that the $2m+k$-principal symbol of $[(1+\Delta)^m, D]$ vanishes. 
We conclude that $[(1+\Delta)^m, D]$ is of order $2m+k-1$. 

For a smooth vector field $X$, $[\n_t, \n_X] = R^\n(\d_t, X) + \n_{[\d_t, X]}$, where $R^\n$ is the curvature on $F$ induced by $\n$. 
It follows that $[\n_t, \n_X]$ is a differential operator of first order. 
If $X \in T\H_t$ for all $t \in [0, \e)$, then $[\d_t, X] \in T\H_t$ as well. 
By repeating this argument, we conclude that $[(1+\Delta)^m, \n_t]$ is a differential operator of order $2m$ only differentiating in $\H_t$-direction. 
\end{proof}

\begin{remark} \label{rmk: psi_over_t}
On many places in the proof of the energy estimates we will use the following trivial fact. 
Let $f \in C^\infty([0, \e) \times \H, \R)$ such that $f|_{t = 0}$. 
By compactness of $\H$, there is is a constant $C_f > 0$ such that
\[
	\max_{x \in \H}(f(t, \cdot)) \leq C_f t 
\]
for all $t>0$ small enough. 
If in addition $f \in C^\infty([0, \e) \times \H, \R)$ and $\d_tf|_{t = 0} > 0$, then there is a constant $C_f > 0$ such that
\begin{equation} \label{eq: estimate_C_f}
	\frac{t}{C_f} \leq \max_{x \in \H}(f(t, \cdot)) \leq C_f t.
\end{equation}
for all $t>0$ small enough. 
Proposition \ref{prop: time_function} implies that $\psi$ satisfies an inequality like \eqref{eq: estimate_C_f}.
\end{remark}

\begin{lemma} \label{le: commutators2}
Let $m \in \N$ and let $D$ be a linear differential operator on $F$ of order $k \in \N$, differentiating in $\H_t$-direction. 
For any $\a \in \R$ and smooth vector field $X$ such that $X \in T\H_t$ for all $t \in [0, \e)$, the operators $[D, \psi^\alpha]$, $[D, \d_X(\psi^\a)]$ and $[D, \d_t \psi]$ are differential operators of order $k-1$.
Moreover, for any $s \in \R$, there is a constant $C_s > 0$ such that for all $[t_0, t_1] \subset [0, \e)$,
\begin{align*}
	\norm{[D, \psi^\alpha] u(t, \cdot)}_{s} &\leq Ct^{\a + 1}\norm{u(t, \cdot)}_{s+k-1}, \\
	\norm{[D, \d_X(\psi^\a)] u(t, \cdot)}_{s} &\leq Ct^{\a + 1}\norm{u(t, \cdot)}_{s+k-1}, \\
	\norm{[D, (\d_t\psi) \psi^\a] u(t, \cdot)}_{s} &\leq Ct^{\a+1}\norm{u(t, \cdot)}_{s+k-1}.
\end{align*}
for any $s \in \R$ and $u \in C^\infty([t_0, t_1] \times \H, F)$ and $t \in [t_0, t_1]$.
\end{lemma}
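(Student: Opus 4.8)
The plan is to treat all three commutators in exactly the same way, since $\psi^\a$, $\d_X(\psi^\a)$, and $(\d_t\psi)\psi^\a$ are all smooth functions on $[0,\e)\times\H$ that vanish to at least first order at $t=0$; the only input we will use is that they are $O(t)$ together with all their $\H_t$-tangential derivatives, uniformly in $x\in\H$, on compact subsets of $[0,\e)$. So first I would record the general principle: if $\phi\in C^\infty([0,\e)\times\H,\R)$ with $\phi|_{t=0}=0$, then by Remark \ref{rmk: psi_over_t} (applied also to each $\H_t$-tangential derivative of $\phi$, which again vanishes at $t=0$) there is for every $k$ a constant with $\sup_{x\in\H}|(\text{tangential derivatives of order}\le k\text{ of }\phi)(t,\cdot)|\le C t$ for small $t$, hence on any $[t_0,t_1]\subset[0,\e)$ a uniform bound of the same form. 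The three functions in the statement are of this type: $\psi$ vanishes at $t=0$ by Proposition \ref{prop: time_function}, hence so do $\psi^\a$ (for $\a>0$; for $\a\le 0$ one instead notes $\psi^\a$ is smooth on $(0,\e)$ and the claimed estimate carries the factor $t^{\a+1}$, which is exactly one power of $t$ better than the naive $t^\a$ — see below), $\d_X(\psi^\a)=\a\psi^{\a-1}\d_X\psi$, and $(\d_t\psi)\psi^\a$. Actually the cleanest bookkeeping is: write $\psi^\a = t^\a \tilde\psi^\a$ where $\tilde\psi := \psi/t$ is smooth and bounded above and below on compact subsets of $[0,\e)$ by \eqref{eq: estimate_C_f}; then every tangential derivative of $\psi^\a$ is $t^\a$ times a bounded factor, and similarly $\d_X(\psi^\a)$ and $(\d_t\psi)\psi^\a$ are $t^\a$ times bounded smooth factors. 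So in all three cases we are looking at $[D,\phi]$ with $\phi = t^\a\chi$, $\chi$ smooth with all $\H_t$-tangential derivatives bounded on $[t_0,t_1]\times\H$.

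Next, the order count. The operator $D$ differentiates only in $\H_t$-direction and has order $k$; multiplication by the function $\phi$ is order $0$. The commutator of an order-$k$ operator with an order-$0$ multiplication operator is order $k-1$: in local coordinates on $\H_t$ the top-order part of $D$ is $\sum_{|\beta|=k} a_\beta \d^\beta$, and $[\d^\beta,\phi]$ is a sum of terms $(\d^\gamma\phi)\d^{\beta-\gamma}$ with $|\gamma|\ge 1$, so every surviving term has at most $k-1$ derivatives falling on $u$. Crucially, each such term carries at least one tangential derivative of $\phi = t^\a\chi$, which by the previous paragraph is $t^\a$ times a bounded factor (there is no help in going to higher $\gamma$, but no harm either). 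Hence $[D,\phi] = t^\a \sum (\text{bounded smooth coefficients})\cdot(\text{tangential derivatives of order}\le k-1)$.

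Finally, the Sobolev estimate. The Sobolev norms $\norm{\cdot}_s(t)$ are built from $(1+\Delta)^{s/2}$ on $\H_t$ with the fixed metric $\sigma$ and the $L^2(\H_t)$ inner product; since $\H$ is compact and $\sigma$ is $t$-independent, a differential operator of order $\le k-1$ on $\H_t$ with coefficients depending smoothly on $(t,x)$ maps $H^{s+k-1}(\H_t)\to H^s(\H_t)$ boundedly, with operator norm bounded uniformly for $t\in[t_0,t_1]$ (the coefficients and finitely many of their derivatives are continuous on the compact set $[t_0,t_1]\times\H$). Pulling out the scalar factor $t^\a$ gives $\norm{[D,\phi]u(t,\cdot)}_s \le C t^\a \norm{u(t,\cdot)}_{s+k-1}$; and in fact one power of $t$ better, because — as noted above — every term of $[D,\phi]$ contains a \emph{tangential derivative} of $\phi$, and $\d_{e_i}(t^\a\chi) = t^\a\d_{e_i}\chi$ where $\d_{e_i}\chi$ again vanishes at $t=0$ (being a tangential derivative of a function of the form $\psi/t$ times stuff, or of $\d_X\psi$, or of $\d_t\psi$, each of which... hm, $\d_t\psi$ does not vanish at $t=0$). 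Let me restate this last improvement carefully: for $[D,\psi^\a]$ and $[D,\d_X(\psi^\a)]$ the extra power is immediate since $\psi$ and $\d_X\psi$ vanish at $t=0$ so each term has a factor vanishing at $t=0$, i.e. an extra $t$; for $[D,(\d_t\psi)\psi^\a]$ one uses that the $\psi^\a$ factor already contributes $t^\a$ and the function $(\d_t\psi)\psi^\a$, while not obviously gaining, has each of its \emph{tangential} derivatives still containing a $\psi$ or a $\d_X\psi$ factor... the honest statement is that $(\d_t\psi)\psi^\a = t^{\a}\cdot(\d_t\psi)\tilde\psi^\a$ and the single tangential derivative appearing in each commutator term acts to produce $t^\a$ times a bounded factor, so we get $t^\a$; to upgrade to $t^{\a+1}$ one absorbs the remaining $\psi$-type vanishing — indeed $\d_t\psi(0,\cdot)=2\ne 0$ but $\psi^\a$ itself is $t^\a\tilde\psi^\a$ with no extra vanishing, so the extra power comes instead from writing $\psi^{\a} = t\cdot t^{\a-1}\tilde\psi^\a$... \textbf{this is the delicate point}. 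I expect the main obstacle to be exactly this: getting the sharp exponent $t^{\a+1}$ (rather than $t^\a$) uniformly, which forces one to be careful that in each commutator term the derivative hitting the function lands on a factor that genuinely vanishes at $t=0$ (true for $\psi$ and $\d_X\psi$) and, in the third case, to rewrite $(\d_t\psi)\psi^\a$ so that one factor of $t$ can be peeled off from the $\psi^\a$ part — using $\psi\le C_f t$ from \eqref{eq: estimate_C_f} — before estimating. Modulo that bookkeeping, everything reduces to the elementary mapping property of differential operators of order $\le k-1$ between Sobolev spaces on the fixed compact Riemannian manifold $(\H,\sigma)$, with constants uniform in $t\in[t_0,t_1]$ by compactness of $[t_0,t_1]\times\H$.
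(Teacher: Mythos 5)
There is a genuine gap, and it sits exactly where you flag it: the sharp exponent $t^{\a+1}$. Your bookkeeping only uses that $\psi|_{t=0}=0$ and $\d_X\psi|_{t=0}=0$, and that is one power of $t$ short. Test it on the simplest case $\a=1$, $D=\n_{e_i}$ of order $k=1$: the coefficient of $[D,\psi]$ is $\d_{e_i}\psi$, and knowing only $\psi(0,\cdot)=0$ gives $\d_{e_i}\psi=O(t)$, hence $\norm{[D,\psi]u}_s\lesssim t\norm{u}_s$, whereas the lemma claims $t^{2}$. The same shortfall occurs in your general accounting: a tangential derivative of $\psi^\a$ is a sum of terms $\psi^{\a-j}\prod_l \d^{\gamma_l}\psi$, and with $\d^{\gamma_l}\psi=O(t)$ you only reach $O(t^\a)$. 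The missing ingredient is the paper's key observation that $\psi(t,\cdot)/t\to\d_t\psi(0,\cdot)=2$ in $C^\infty(\H)$, i.e.\ that $\d_t\psi(0,\cdot)$ is a \emph{constant on $\H$} (this is where the normalised surface gravity enters through Proposition \ref{prop: time_function}). Consequently every $\H$-tangential derivative of $\psi$ vanishes to \emph{second} order at $t=0$ (since $\d_{e_i}\psi|_{t=0}=0$ and $\d_t\d_{e_i}\psi|_{t=0}=\d_{e_i}\d_t\psi|_{t=0}=\d_{e_i}(2)=0$), and likewise the tangential derivatives of $\d_X\psi$ and of $\d_t\psi$ vanish at $t=0$. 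With $\d^{\gamma_l}\psi=O(t^2)$ your coefficient count does give $O(t^{\a+1})$ in all three cases, including the third one you leave unresolved: $\d_{e_i}\bigl((\d_t\psi)\psi^\a\bigr)=(\d_{e_i}\d_t\psi)\psi^\a+\a(\d_t\psi)\psi^{\a-1}\d_{e_i}\psi=O(t^{\a+1})$, precisely because the tangential derivative of $\d_t\psi$ (not $\d_t\psi$ itself) vanishes at $t=0$.

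For comparison, the paper proceeds slightly differently but with the same key fact: since $D$ only differentiates in $\H_t$-direction, $t^\a$ commutes with $D$, so $[D,\psi^\a]=t^\a[D,(\psi/t)^\a]$, and $(\psi/t)^\a\to 2^\a$ in $C^\infty(\H)$, whence $[D,(\psi/t)^\a]u(t,\cdot)\to[D,2^\a]u(0,\cdot)=0$; Remark \ref{rmk: psi_over_t} then converts this vanishing at $t=0$ into the extra factor of $t$. Your coefficient-based reduction to uniform mapping properties of order-$(k-1)$ operators on the fixed compact manifold $(\H,\sigma)$ is fine (and arguably makes the uniformity in $u$ more transparent), but without the constancy of $\d_t\psi|_{t=0}$ — equivalently the $C^\infty(\H)$ convergence of $\psi/t$ to a constant — the proposal proves only the weaker bound $Ct^{\a}\norm{u}_{s+k-1}$, which is not sufficient for the way the lemma is used later (e.g.\ in Lemma \ref{le: Sobolev_vs_energy} and the energy estimate).
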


\begin{proof}
Note that it suffices to prove the estimates for $u \in C^\infty([0, \e) \times \H, F)$.
Since $\psi^\a$, $\d_X (\psi^\a)$ and $\d_t \psi$ are scalar valued functions, it is clear that $[D, \psi^\alpha]$, $[D, \d_X(\psi^\a)]$ and $[D, (\d_t\psi) \psi^\a]$ are of order $k-1$. 
The key observation is that
\[
	\frac{\psi(t, \cdot)}{t} =  \frac{\psi(t, \cdot) - \psi(0, \cdot)}t \to \d_t\psi(0, \cdot) = 2
\]
as $t \to 0$ in $C^\infty(\H)$, since $\psi(0, \cdot) = 0$.
As a consequence, we get
\begin{align*}
	\frac{[D, \psi^\alpha]}{t^\a}u(t, \cdot) = [D,\left(\frac{\psi}t\right)^\alpha]u(t, \cdot) \to [D,2^\a]u(0, \cdot) = 0
\end{align*}
as $t \to 0$.
Therefore
\begin{align*}
	(1+\Delta)^{s/2}\frac{[D, \psi^\alpha]}{t^\a}u(t, \cdot)|_{t = 0} &= 0.
\end{align*}
We conclude that 
\begin{align*}
	a(1+\Delta)^{s/2}\frac{[D, \psi^\alpha]}{t^\a}u(t, \cdot), (1+\Delta)^{s/2}\frac{[D, \psi^\alpha]}{t^\a}u(t, \cdot))|_{t = 0} 
		&= 0, \\
	\d_t a(1+\Delta)^{s/2}\frac{[D, \psi^\alpha]}{t^\a}u(t, \cdot), (1+\Delta)^{s/2}\frac{[D, \psi^\alpha]}{t^\a}u(t, \cdot))|_{t = 0} 
		&= 0.
\end{align*}
Thus Remark \ref{rmk: psi_over_t} implies the first estimate.
The second and the third estimates follow along the same lines using $\d_t \psi(0, \cdot) = 2$.
\end{proof}

\subsubsection{The energy}

The proof of Theorem \ref{thm: Energy1} will rely on differentiating an energy with respect to the null time function. 
It turns out that the expression $\norm{u(t, \cdot)}_{2m+1} + \sqrt{t}\norm{\n_t u(t, \cdot)}_{2m}$ is not best suited for the calculations. 
Instead, we define another energy $\E^{2m}(u,t)$ which is equivalent to this and better suited for proving the energy estimate.
For this, we first define the expression
\[
	\g \circ a(\bar \n f_1, \bar \n f_2) := \sum_{i,j = 2}^n \g^{ij} a(\n_{e_i} f_1, \n_{e_j} f_2),
\]
for smooth sections $f_1, f_2$, where $(e_2, \hdots, e_n)$ is a frame in $E$. Recall that $\g$ is a positive definite metric on $E \subset T\H_t$ for all $t \in [0, \e)$.
Since $E \subset T\H_t$ is a subbundle, this expression is defined independently of the choice of frame.
We will also use the following natural notation
\[
	\langle \phi \bar \n f_1, \bar \n f_2\rangle_0(t, \cdot) := \int_{\H} \phi \g \otimes a(\bar \n f_1(t, \cdot), \bar \n f_2(t, \cdot)) d\mu_\sigma
\]
and
\[
	\norm{\phi \bar \n f}_0(t) := \sqrt{\langle \phi \bar \n f, \phi \bar \n f\rangle_0(t)},
\]
for smooth scalar valued functions $\phi$.
With these defininitions at hand, we may define the energy.
For any $m \in \N$, we define the $2m$-energy as
\begin{align*}
	\E^{2m}(u, t) &:=  \norm{2\n_{\grad(t)} u + \psi \n_t u}_{2m}^2 + \norm{\sqrt{\psi} \bar \n (1+\Delta)^m u}_{0}^2 \\*
	& \quad + \norm{\sqrt \psi \n_tu}_{2m}^2 + \norm{\bar \n (1+\Delta)^m u}_{0}^2 \\*
	& \quad + \norm{u}_{2m}^2.
\end{align*}
Here and from now on $\norm{\cdot}_{s}$ means the Sobolev norm with respect to $\sigma$ as defined in equation \eqref{eq: Sobolev_sigma}.
The following lemma clarifies the relation between the Sobolev norms and the energy.

\begin{lemma} \label{le: Sobolev_vs_energy}
For each $m \in \N$ there is a constant $C_m > 0$ such that for all $[t_0, t_1] \subset [0, \e)$, we have
\begin{equation} \label{eq: Sobolev_vs_energy}
	\frac1{C_m} \sqrt {\E^{2m}(u, t)} \leq \norm{u(t, \cdot)}_{2m+1} + \sqrt t \norm{\n_t u(t, \cdot)}_{2m}  \leq C_m \sqrt {\E^{2m}(u, t)}
\end{equation}
for all $u \in C^\infty([t_0, t_1] \times \H, F)$ and $t \in [t_0, t_1]$. 
\end{lemma}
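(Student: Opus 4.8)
\textbf{Proof plan for Lemma \ref{le: Sobolev_vs_energy}.}
The statement asserts a two-sided comparison between the ``energy'' $\E^{2m}(u,t)$ and the square of the natural Sobolev quantity $\norm{u(t,\cdot)}_{2m+1} + \sqrt t \norm{\n_t u(t,\cdot)}_{2m}$, uniformly for $t$ in a compact subinterval of $[0,\e)$. The plan is to expand the definition of $\E^{2m}$ term by term and to show that, modulo harmless lower-order pieces that can be absorbed by the $\norm{u}_{2m}^2$ summand, the energy captures exactly three contributions: a full-order spatial part $\norm{u(t,\cdot)}_{2m+1}^2$, a weighted time-derivative part $t\,\norm{\n_t u(t,\cdot)}_{2m}^2$, and a bounded remainder. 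Since all Sobolev norms on the compact manifold $\H$ are equivalent and the metrics $\sigma$, $\g$, $\psi$ vary smoothly in $t$ (with $\psi$ comparable to $t$ by Remark \ref{rmk: psi_over_t}), the constant $C_m$ can be taken uniform on $[t_0,t_1]$; in fact it will be uniform on all of $[0,\e)$ after possibly shrinking $\e$.

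The key steps, in order, are as follows. First I would record that the $(2m+1)$-Sobolev norm of $u$ on $\H_t$ is equivalent to $\norm{(1+\Delta)^{1/2}(1+\Delta)^m u}_{L^2}$, and that, since $\Delta$ is the connection-Laplacian for $\sigma$ and $T\H_t = \R Z \oplus E$, one has the splitting estimate $\norm{(1+\Delta)^{1/2} h}_{L^2}^2 \simeq \norm{h}_{L^2}^2 + \norm{\n_Z h}_{L^2}^2 + \norm{\bar\n h}_0^2$ for any section $h$ (applied with $h = (1+\Delta)^m u$). Second, I would use that $\grad(t) = Z - \psi \d_t$, so $2\n_{\grad(t)}u + \psi \n_t u = 2\n_Z u - \psi \n_t u$, and combine the term $\norm{2\n_{\grad(t)}u + \psi\n_t u}_{2m}^2$ with $\norm{\sqrt\psi\,\n_t u}_{2m}^2$: by the elementary inequality relating $\norm{a-b}^2$, $\norm{b}^2$ to $\norm{a}^2$, $\norm{b}^2$ (and using $\psi \lesssim t \lesssim 1$), these two terms together are comparable to $\norm{\n_Z u}_{2m}^2 + t\norm{\n_t u}_{2m}^2$ up to constants, modulo commutator errors between $\n_Z$ (resp.\ $\n_t$) and $(1+\Delta)^m$ which by Lemma \ref{le: commutators1} are of order $\leq 2m$ and hence bounded by $\norm{u}_{2m}^2$-type terms, actually by $\norm{u}_{2m+1}^2$ which we already control from the other side. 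Third, the remaining summands $\norm{\sqrt\psi\,\bar\n(1+\Delta)^m u}_0^2$ and $\norm{\bar\n(1+\Delta)^m u}_0^2$ are, again using $\psi \lesssim 1$, comparable to $\norm{\bar\n(1+\Delta)^m u}_0^2$, which by the splitting of step one is one of the pieces of $\norm{u}_{2m+1}^2$. Assembling: $\E^{2m}(u,t) \simeq \norm{u}_{2m}^2 + \norm{\n_Z u}_{2m}^2 + \norm{\bar\n(1+\Delta)^m u}_0^2 + t\norm{\n_t u}_{2m}^2 \simeq \norm{u}_{2m+1}^2 + t\norm{\n_t u}_{2m}^2$, which upon taking square roots gives \eqref{eq: Sobolev_vs_energy}.

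The main obstacle I expect is bookkeeping the commutators carefully so that they are genuinely absorbable. Specifically, in passing between $\norm{\n_X (1+\Delta)^m u}_{L^2}$ and $\norm{(1+\Delta)^m \n_X u}_{L^2}$ for $X \in \{Z, \d_t\}$, one picks up $[(1+\Delta)^m, \n_X]u$, which Lemma \ref{le: commutators1} shows differentiates only in $\H_t$-direction and has order $2m$; its $L^2$-norm is thus bounded by $\norm{u}_{2m}$, which is fine \emph{provided} we have already shown the $\le$ direction controlling $\norm{u}_{2m}$ by $\sqrt{\E^{2m}}$ — so the argument should be organised to establish the two inequalities in the right logical order, or simply to note that $\norm{u}_{2m}^2$ is one of the explicit summands of $\E^{2m}$, so it is controlled on both sides from the start. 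A secondary subtlety is the time-derivative term: the energy contains $\sqrt\psi\,\n_t u$ rather than $\sqrt t\,\n_t u$, but since $\psi/t$ is bounded above and below by positive constants for small $t>0$ (Remark \ref{rmk: psi_over_t}), and at $t=0$ both sides of \eqref{eq: Sobolev_vs_energy} involving $\n_t u$ drop out, this causes no real difficulty; one must only make sure the comparison constant does not degenerate as $t\to 0$, which the uniform bounds on $\psi/t$, $\d_t\psi$, and on the coefficients of $\sigma,\g$ guarantee.
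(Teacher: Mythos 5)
Your plan is correct and follows essentially the same route as the paper's proof: both rest on the splitting $T\H_t = \R Z \oplus E$ (giving the equivalence of $\norm{u}_{2m+1}$ with $\norm{\n_Z u}_{2m} + \norm{\bar\n(1+\Delta)^m u}_0 + \norm{u}_{2m}$), the identity $2\n_{\grad(t)}u + \psi\n_t u = 2\n_Z u - \psi\n_t u$, the commutator bounds of Lemma \ref{le: commutators1} and Lemma \ref{le: commutators2}, and the comparability $\psi \sim t$ from Remark \ref{rmk: psi_over_t}, with lower-order terms absorbed via the explicit $\norm{u}_{2m}^2$ summand of the energy. Your remarks on the logical ordering of the two inequalities and on the uniformity of the constant as $t \to 0$ match the bookkeeping the paper carries out (one direction written out, the other noted as similar).
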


\begin{proof}
Recall from Section \ref{sec: null time function} that $\grad(t) = Z - \psi \d_t$, where $Z|_\H = -V$ and $Z \in T\H_t$ for all $t \in [0, \e)$. 
Moreover, we know that $T\H_t = \R Z \oplus E|_{t}$ for all $t \in [0, \e)$. 
Since $\H$ is compact and $\bar g$ is a Riemannian metric for all $t \in [0, \e)$, the norm $\norm{\n_Z u}_{2m} + \norm{\bar \n (1+\Delta)^mu}_0 + \norm{u}_{2m}$ is equivalent to $\norm{u}_{2m+1}$ for all $t \in [0, \e)$. 
Let $C$ denote some constant depending on $m$ but not on $u$. 
We have
\begin{align*}
	\norm{u}_{2m+1} &\leq C (\norm{\n_Z u}_{2m} + \norm{\bar \n (1+\Delta)^m u}_0 + \norm{u}_{2m}) \\
		&\leq C\norm{2\n_Zu - \psi \n_t u}_{2m} + C \norm{\psi\n_t u}_{2m} + C\sqrt{\E^{2m}(u,t)} \\
		&\leq C\norm{2\n_{\grad(t)}u + \psi \n_t u}_{2m} + C \norm{[(1+\Delta)^m, \sqrt \psi] \sqrt \psi (\n_t u)}_{0} \\
		&\quad + C \norm{\sqrt \psi(1+\Delta)^m(\sqrt \psi \n_t u)}_{0}+ C\sqrt{\E^{2m}(u,t)} \\
		&\leq C \sqrt{\E^{2m}(u,t)},
\end{align*}
by Lemma \ref{le: commutators2}.
We also have
\begin{align*}
	\norm{\n_t u}_{2m}
		& \leq C \norm{[(1+\Delta)^m, \frac1{\sqrt \psi}](\sqrt{\psi}\n_t u)}_{0} + C \norm{ \frac1{\sqrt \psi}(1+\Delta)^m(\sqrt{\psi}\n_t u)}_{0} \\
		& \leq \frac C {\sqrt t} \sqrt{\E^{2m}(u,t)},
\end{align*}
by Remark \ref{rmk: psi_over_t} and Lemma \ref{le: commutators2}.
The other direction is similar.
\end{proof}

\subsection{Proof of the energy estimate} \label{sec: Energy_proof}

We first derive the main part of the energy estimate for a specific wave operator defined as
\[
	Q:= - \n_t (\psi \n_t + 2\n_{\grad(t)}) + \bar \Delta,
\]
where  
\[
	\bar \Delta f := - \sum_{i = 2}^n \g^{ij}\left(\n_{e_i} \n_{e_j} - \n_{\n_{e_i}e_j} \right)f,
\]
for some frame $(e_i)_{i=2}^n$ of $E$.
Again, since $E \subset T\H_t$ is a subbundle, the operator $\bar \Delta$ is independent of the choice of frame. 
Let us for completeness check that $Q$ is indeed a wave operator.
By Proposition \ref{prop: time_function}, the metric is given by
\[
	g_{\a \b} = \begin{pmatrix}
	0 & 1 & 0 \\
	1 & -\psi & 0 \\
	0 & 0 & \g_{ij}
	\end{pmatrix}
	\Rightarrow
	g^{\a \b} = \begin{pmatrix}
	\psi & 1 & 0 \\
	1 & 0 & 0 \\
	0 & 0 & \g^{ij}
	\end{pmatrix},
\]
for $i,j \geq 2$, in the basis $(\d_t, \grad(t), e_2, \hdots, e_n)$, where $e_2, \hdots, e_n \in E$.
Since the leading order term of $Q$ is of the form $-g^{\a\b}\d_{e_\a} \d_{e_\b}$ with respect to this frame, we conclude that $Q$ is a wave operator.

In this section, $\norm{\cdot}_s$ denotes the Sobolev norm on $H^{s}(\H)$ with respect to $\sigma$ as defined in equation \eqref{eq: Sobolev_sigma}.
The following lemma is the essential estimate in the proof of Theorem \ref{thm: Energy1}.
Recall from Section \ref{sec: null time function} that $\grad(t) = Z - \psi \d_t$, where $Z|_\H = -V$ and $Z \in T\H_t$ for all $t \in [0, \e)$. 

\begin{lemma}[The energy estimate for the operator $Q$] \label{lemma: Q-estimate}
For any integer $m \in \N$ there is a constant $C > 0$ such that for all $[t_0, t_1] \subset (0, \e)$, we have
\begin{align*}
		\frac{d}{dt} \E^{2m}(u,t)& \leq \frac{C}t\E^{2m}(u,t) - 2 \Re \langle{Q u, 2\n_Z u - \psi \n_t u } \rangle_{2m} \\
			&\quad + 2 \Re \langle \frac{1}{\sqrt \psi} Q u, \sqrt \psi \n_t u\rangle_{2m}. \label{eq: first_estimate}
\end{align*}
for any $u \in C^\infty([t_0, t_1] \times \H, F)$ and each $t \in [t_0, t_1]$.
\end{lemma}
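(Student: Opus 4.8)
The plan is to differentiate the energy $\E^{2m}(u,t)$ term by term with respect to $t$, expressing each derivative of a squared Sobolev norm using the formula $\frac{d}{dt}\norm{h}_{2m}^2 = 2\Re\langle \n_t h, h\rangle_{2m} + (\text{terms from }\d_t \text{ acting on }\sigma,\g,\psi\text{ and on }(1+\Delta)^m)$. The error terms coming from the $t$-dependence of the metric coefficients and of the operator $(1+\Delta)^m$ are all lower order, and by Lemma \ref{le: commutators1} and Lemma \ref{le: commutators2} (together with Remark \ref{rmk: psi_over_t}, which gives $\psi(t,\cdot)\sim t$ and hence $\psi^{-1}\sim t^{-1}$) they are bounded by $\frac{C}{t}\E^{2m}(u,t)$. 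This is where the factor $1/t$ in the statement originates: every time we pay a commutator with $\sqrt\psi$ or $\psi^{-1/2}$ we gain or lose a power of $t$, and the worst term forces the $\frac{C}{t}$ weight.

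The main structural input is an integration by parts in the spatial directions on the compact manifold $\H_t$. Writing $Qu = -\n_t(\psi\n_t u + 2\n_{\grad(t)}u) + \bar\Delta u$, I would pair $Qu$ against the two test sections appearing on the right-hand side, namely $2\n_Z u - \psi\n_t u$ and $\frac{1}{\sqrt\psi}$ times $\sqrt\psi\n_t u = \n_t u$, after applying $(1+\Delta)^m$ and taking real parts of the $L^2$-inner product. The first-order-in-$\n_t$ part $-\n_t(\psi\n_t u + 2\n_{\grad(t)}u)$, paired with $\psi\n_t u + 2\n_{\grad(t)}u = -( 2\n_Z u - \psi\n_t u) + (\text{correction})$ — recall $\grad(t) = Z - \psi\d_t$ so $2\n_{\grad(t)}u + \psi\n_t u = 2\n_Z u - \psi\n_t u$ — produces exactly $-\frac{d}{dt}\norm{2\n_Z u - \psi\n_t u}_{2m}^2$ up to commutator errors, which accounts for the first term of $\E^{2m}$. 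The operator $\bar\Delta$, paired with $\n_t u$ and integrated by parts in the $E$-directions, produces $\pm\frac{d}{dt}\norm{\bar\n(1+\Delta)^m u}_0^2$ and the $\norm{\sqrt\psi\,\bar\n(1+\Delta)^m u}_0^2$ contributions; here the key point is that $\bar\Delta$ differentiates only in $\H_t$-directions so these integrations by parts are legitimate on the closed manifold $\H_t$ with no boundary term. The remaining pieces $\norm{\sqrt\psi\n_t u}_{2m}^2$, $\norm{\bar\n(1+\Delta)^m u}_0^2$ and $\norm{u}_{2m}^2$ are there precisely to absorb the lower-order leftovers into $\frac{C}{t}\E^{2m}$.

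Concretely, the steps are: (1) compute $\frac{d}{dt}$ of each of the five terms of $\E^{2m}$, keeping the principal $\n_t$-contributions exactly and collecting all commutator/metric-variation terms; (2) insert $Qu$ by replacing $-\n_t(\psi\n_t u + 2\n_{\grad(t)}u)$ with $Qu - \bar\Delta u$ wherever the combination $\n_t(\psi\n_t u + 2\n_{\grad(t)}u)$ appears, which converts the uncontrolled second-order-in-$t$ terms into the two explicit $Qu$-pairings on the right and a $\bar\Delta u$-pairing; (3) integrate the $\bar\Delta u$-pairings by parts in the spatial directions to match them against the $\frac{d}{dt}$ of the $\bar\n$-terms, again up to commutators; (4) estimate every remaining commutator term — those involving $[(1+\Delta)^m,\n_t]$, $[(1+\Delta)^m,\sqrt\psi]$, $[(1+\Delta)^m,\psi^{-1/2}]$, $\d_t\g$, $\d_t\sigma$, Christoffel symbols of $\sigma$ versus $\g$, and the curvature $R^\n$ — by $\frac{C}{t}\E^{2m}(u,t)$ using Lemma \ref{le: commutators2} and Remark \ref{rmk: psi_over_t}; (5) collect.

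The hard part will be the bookkeeping in steps (1)--(3): making sure that the principal terms telescope correctly so that $\frac{d}{dt}\E^{2m}$ reproduces precisely $-2\Re\langle Qu, 2\n_Z u - \psi\n_t u\rangle_{2m} + 2\Re\langle \psi^{-1/2}Qu, \sqrt\psi\n_t u\rangle_{2m}$ plus genuinely lower-order material, with no uncontrolled term of size $t^{-2}\E^{2m}$ or worse surviving. In particular one must check that the dangerous cross terms — e.g. those where $\n_t$ hits $\psi$ producing $\d_t\psi$, or where $(1+\Delta)^m$ fails to commute with $\psi^{\pm 1/2}$ — combine so that the singular weights are exactly $t^{-1}$ and not worse; this is exactly the role of the weighting by $\sqrt\psi\sim\sqrt t$ in the definition of $\E^{2m}$, and the role of pairing $Qu$ once with weight $1$ and once with weight $\psi^{-1/2}$. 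Since $[t_0,t_1]\subset(0,\e)$ stays away from $t=0$, all manipulations are on a fixed compact manifold with a genuine Riemannian metric, so there are no regularity or boundary issues; the only subtlety is the uniformity of the constant $C$ as $t_0\to 0$, which is guaranteed by the uniform estimates of Lemma \ref{le: commutators2}.
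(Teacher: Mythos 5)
Your overall route is the same as the paper's: differentiate the five terms of $\E^{2m}$, trade $\n_t(\psi\n_t u-2\n_Z u)$ for $Qu-\bar\Delta u$, integrate the $\bar\Delta$-pairings by parts on $\H_t$, and absorb the remainder into $\frac{C}{t}\E^{2m}(u,t)$ via Lemma \ref{le: commutators1}, Lemma \ref{le: commutators2} and Remark \ref{rmk: psi_over_t}. However, your step (4) (\enquote{estimate every remaining commutator term}) would fail as literally stated: after the substitution and the spatial integration by parts you are left with genuinely top-order \emph{transport} terms in the $Z$-direction, namely $4\Re\langle\bar\n(1+\Delta)^m u,\bar\n\n_Z(1+\Delta)^m u\rangle_{0}$ from the first energy group, and $4\Re\langle\frac{1}{\sqrt\psi}\n_Z\n_t u,\sqrt\psi\n_t u\rangle_{2m}$ from the $\norm{\sqrt\psi\n_t u}_{2m}^2$-group (to produce the pairing $2\Re\langle\frac{1}{\sqrt\psi}Qu,\sqrt\psi\n_t u\rangle_{2m}$ you must use $\n_t(\psi\n_t u)=Qu-\bar\Delta u+2\n_t\n_Z u$, and the extra $\n_t\n_Z u$ is the price). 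These are not commutators: they contain one more spatial derivative of $u$, respectively of $\sqrt\psi\n_t u$, than the energy controls, so no direct bound by $\frac{C}{t}\E^{2m}$ exists. The paper closes them by a second integration by parts, along $Z$, on the closed $\H_t$: $2\Re\,\g\otimes a(\bar\n v,\bar\n\n_Z v)$, resp.\ $2\Re\, a(\n_Z w,w)$, is rewritten as $\d_Z$ of the corresponding square plus zero-order corrections, and Stokes converts the $\d_Z$-integral into bounded $\div_\sigma(Z)$-terms, while the weight contributes $\d_Z(1/\psi)=O(1/t)$ — exactly within the $\frac{C}{t}\E^{2m}$ budget. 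Without this skew-symmetry/divergence step your telescoping does not close; with it, your plan reproduces the paper's proof.

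A smaller point: the spatial integration by parts for $\bar\Delta$ on $\H_t$ is not error-free even though $\H_t$ has no boundary. Since $\n$ is the spacetime-induced connection and the measure is $d\mu_\sigma$, the divergence computation produces terms containing the transversal derivative $\n_t$ of the argument; their coefficients vanish at $t=0$ only because $\H$ is totally geodesic, which is precisely the content of Lemma \ref{le: part_integration}. With the weights you use, these errors are in any case at most $\frac{C}{t}\E^{2m}$, so they do not endanger the argument, but they should be recorded rather than dismissed as \enquote{no boundary term}. (Also note that the favourable sign of $\d_t\psi/\psi$, which the paper exploits, is a refinement you do not strictly need given the allowed $\frac{C}{t}$ weight.)
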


\noindent An important tool in the proof of Lemma \ref{lemma: Q-estimate} is the following \enquote{integration by parts} estimate.

\begin{lemma} \label{le: part_integration}
There exists a $C > 0$ such that for all $[t_0, t_1] \subset [0, \e)$ we have 
\begin{equation}
	\abs{\langle \bar \Delta f_1, f_2 \rangle_{0} - \langle \bar \n f_1, \bar \n f_2 \rangle_{0}} \leq C \norm{f_1}_{1}\norm{f_2}_{0} + Ct \norm{\n_tf_1}_{0} \norm{f_2}_{0}, \label{eq: part_integration}
\end{equation}
for all $f_1, f_2 \in C^\infty([t_0, t_1] \times \H, F)$ and for each $t \in [0,\e)$.
\end{lemma}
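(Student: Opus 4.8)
The plan is to compute the difference $\langle \bar\Delta f_1, f_2\rangle_0 - \langle\bar\n f_1,\bar\n f_2\rangle_0$ by a genuine integration-by-parts on each slice $\H_t$, and then show that the ``error'' terms produced are bounded by the right-hand side of \eqref{eq: part_integration}. First I would fix $t\in(0,\e)$, pick a local frame $(e_i)_{i=2}^n$ of $E\subset T\H_t$, and write $\bar\Delta f_1 = -\sum \g^{ij}(\n_{e_i}\n_{e_j} - \n_{\n_{e_i}e_j})f_1$. Plugging this into $\langle\bar\Delta f_1,f_2\rangle_0 = \int_{\H_t} a(\bar\Delta f_1, f_2)\,d\mu_\sigma$ and integrating by parts one derivative, one moves an $\n_{e_i}$ off $f_1$ and onto $f_2$ (using that $\n$ is compatible with $a$ up to the term controlled by $B$, i.e. more precisely using $\d_{e_i} a(\cdot,\cdot) = a(\n_{e_i}\cdot,\cdot) + a(\cdot,\n_{e_i}\cdot)$ if $\n$ is metric w.r.t. $a$, or otherwise collecting the non-metricity as a zeroth-order error). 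The principal term that survives is exactly $\sum\g^{ij}a(\n_{e_i}f_1,\n_{e_j}f_2) = \g\otimes a(\bar\n f_1,\bar\n f_2)$, integrated against $d\mu_\sigma$; the remaining terms are where the work lies.

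The key point is to identify the structure of those remaining terms. They are of two types. First, there are terms coming from differentiating the ``weights'' $\g^{ij}$, the Christoffel-type symbols $\n_{e_i}e_j$, and the discrepancy between the volume element $d\mu_\sigma$ and what one would want for the divergence theorem adapted to $E$ (recall $\g$ and $\sigma$ agree on $E\times E$ but $\sigma$ is the metric actually used for $d\mu_\sigma$ and for $\Delta$); all of these are first-order in $f_1$ and zeroth-order in $f_2$, hence bounded by $C\norm{f_1}_1\norm{f_2}_0$ after Cauchy--Schwarz. Second — and this is the term responsible for the $Ct\norm{\n_t f_1}_0\norm{f_2}_0$ on the right — when one integrates by parts using the frame $(e_i)$ on $\H_t$, the relevant ``divergence with respect to $\sigma$ on $\H_t$'' of the vector field $\sum_j\g^{ij}e_j$ involves $\hat\n$, and since $E$ is only a subbundle of $T\H_t$ (not $\hat\n$-parallel), the difference between $\bar\Delta$ (built from $\n_{e_i}e_j$, the ambient connection) and the honest $\sigma$-Laplacian restricted to $E$-directions picks up a contribution that is not purely tangential along $\H_t$. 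Tracking this carefully, the offending coefficient is a smooth function on $[0,\e)\times\H$ that vanishes at $t=0$ — essentially because at $t=0$ the slice $\H_0=\H$ is totally geodesic and the two decompositions of $T\H_t$ coincide in the right way, or concretely because the relevant function is built from $\psi$-type quantities (cf. Remark \ref{rmk: psi_over_t}). Hence by the elementary fact in Remark \ref{rmk: psi_over_t} it is $\leq Ct$, and after pairing with a $\n_t f_1$ factor and $f_2$ and applying Cauchy--Schwarz we get precisely the $Ct\norm{\n_t f_1}_0\norm{f_2}_0$ term.

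Concretely the steps are: (i) expand $\langle\bar\Delta f_1,f_2\rangle_0$ in a local frame and integrate by parts one derivative on $\H_t$; (ii) collect the principal term as $\langle\bar\n f_1,\bar\n f_2\rangle_0$; (iii) organize all lower-order terms into (a) those with a bounded coefficient times $\n_{e_i}f_1$ and $f_2$, estimated by $C\norm{f_1}_1\norm{f_2}_0$, and (b) those whose coefficient vanishes at $t=0$ and which involve a $\d_t$-derivative of $f_1$; (iv) apply Remark \ref{rmk: psi_over_t} to bound the coefficient in (b) by $Ct$, then Cauchy--Schwarz; (v) note all constants can be taken uniform in $[t_0,t_1]\subset[0,\e)$ by compactness of $\H$ and smoothness up to $t=0$. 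The main obstacle I expect is step (iii)(b): one has to be careful that integration by parts against $d\mu_\sigma$ on $\H_t$ using the ambient-connection operator $\bar\Delta$ (as opposed to the intrinsic $\sigma$-Laplacian on $\H_t$) produces exactly a term with a $t$-vanishing coefficient and no uncontrolled $\n_t f_2$ — the asymmetry of the estimate (a $\d_t$-derivative is allowed only on $f_1$) must be respected, which forces one to integrate by parts so as to keep all derivatives on $f_1$ when the coefficient is dangerous. The algebra is routine once the geometry of $E\subset T\H_t$ near $t=0$ is pinned down via Proposition \ref{prop: time_function}.
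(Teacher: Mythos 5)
Your proposal is correct and follows essentially the same route as the paper: the paper carries out the slice-wise integration by parts by computing the $\sigma$-divergence of the one-form $\Omega(X)=\sum_{i,j\geq 2}\sigma(X,e_i)\g^{ij}a(\n_{e_j}f_1,f_2)$ (which satisfies $\Omega(V)=0$) and applying Stokes' theorem on the closed slice $\H_t$, producing the principal term $\g\otimes a(\bar\n f_1,\bar\n f_2)$ plus tangential first-order errors in $f_1$ with $f_2$ undifferentiated. The only term containing $\n_t f_1$ comes from $\sum\g^{ij}a(\n_{\n_{e_i}e_j}f_1,f_2)$, whose transversal coefficient vanishes at $t=0$ because $\H$ is totally geodesic (and $\hat\n_{e_k}e_k\in T\H_t$), so Remark \ref{rmk: psi_over_t} yields the factor $t$ — exactly the structure you outline.
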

\begin{proof}[Proof of Lemma \ref{le: part_integration}]
The idea is to compute the divergence with respect to $\sigma$ of a certain one-form and apply the Stokes theorem.
For each smooth vector field $X \in T\H_t$, $\sigma(X, \cdot)$ is a smooth one-form on $\H_t$.
We restrict the two one-forms $\sigma(X, \cdot)$ and $a(\n_{\cdot}f_1, f_2)$ to $E \subset T\H_t$ and use the positive definite metric $\g$ on $E$ to define the one-form $\Omega$ on $T\H_t$ by
\[
	\Omega(X) := \g(\sigma(X, \cdot), a(\n_{\cdot}f_1, f_2)) = \sum_{i,j = 2}^n \sigma(X, e_i)\g^{ij}a(\n_{e_j} f_1, f_2),
\]
where $e_2, \hdots, e_n$ is a frame of $E$.
We want to compute the divergence of $\Omega$ on $T\H_t$ with respect to $\sigma$.
The computation relies on the crucial feature that
\[
	\Omega(V) = 0,
\]
since $\sigma(V, e_i) = 0$ for $i = 2, \hdots, n$.
Choose a $\sigma$-orthonormal frame $(e_1 = V, e_2, \hdots, e_n)$ with $e_2, \hdots, e_n \in E$. 
We calculate
\begin{align*}
	\div_\sigma(\Omega)
		&= \sum_{k = 1}^n \d_{e_k} \Omega(e_k) - \Omega(\hat \n_{e_k}e_k) \\
		&= \sum_{k = 2}^n \d_{e_k} \Omega(e_k) - \sum_{k = 1}^n\Omega(\hat \n_{e_k}e_k) \\
		&= \sum_{j,k = 2}^n \left(\d_{e_k}(\g^{kj})a(\n_{e_j}f_1, f_2) + \g^{kj}\n_{e_k}a(\n_{e_j}f_1, f_2)\right) \\
			& \quad + \sum_{j,k = 2}^n \left(\g^{kj}a(\n_{e_k} \n_{e_j}f_1, f_2) + \g^{kj}a(\n_{e_j}f_1, \n_{e_k}f_2) \right) \\
			& \quad - \sum_{\substack{i,j = 2 \\ k = 1}}^n \sigma(\hat \n_{e_k}e_k, e_i)\g^{ij}a(\n_{e_j}f_1, f_2) \\
		&= - a(\bar \Delta f_1, f_2) + \g \otimes a(\bar \n f_1, \bar \n f_2) \\
		& \quad + \sum_{j,k = 2}^n \left(\d_{e_k}(\g^{kj})a(\n_{e_j}f_1, f_2) + \g^{kj}(\n_{e_k}a)(\n_{e_j}f_1, f_2)\right) \\
		& \quad + \sum_{j,k = 2}^n\g^{kj}a(\n_{\n_{e_k}e_j}f_1, f_2) - \sum_{\substack{i,j = 2 \\ k = 1}}^n \sigma(\hat \n_{e_k}e_k, e_i)\g^{ij}a(\n_{e_j}f_1, f_2).
\end{align*}
Since $\sigma$ is a metric on $\H_t$ it follows that $\hat \n_{e_i}e_i \in T\H_t$ and since $\H$ is totally geodesic, it follows that $\n_{e_i}e_i|_\H \in T\H$.
In other words, all coefficients in front of the term $a(\n_tf_1, f_2)$ vanish at $t = 0$.
Integrating over $\H_t$ using Stokes' theorem and applying Remark \ref{rmk: psi_over_t} implies the assertion.
\end{proof}

\begin{proof}[Proof of Lemma \ref{lemma: Q-estimate}]
We will throughout the proof use Lemma \ref{le: commutators1}, Remark \ref{rmk: psi_over_t}, Lemma \ref{le: commutators2} and Lemma \ref{le: Sobolev_vs_energy} without explicitly mentioning it. 
The symbol $C$ will in this proof denote a constant depending only on $m$, $Q$ and the geometry, its value can change from line to line. 
Let us write $\grad(t) = -\psi \d_t + Z$, where $Z \in T\H_t$ for all $t \in  [0, \e)$. 
The operator $Q$ takes the form 
\[
	Q = \n_t (\psi \n_t - 2\n_Z) + \bar \Delta.
\]
We start by differentiating the first term of the energy.
Inserting the definition of $Q$, we get
\begin{align}
\frac{d}{dt} &\norm{\psi \n_t u - 2\n_Z u}_{2m}^2 \nonumber \\
	&= \frac{d}{dt} \int_\H \abs{(1+\Delta)^m (\psi \n_t u - 2\n_Z u)}^2_a d\mu_\sigma \nonumber \\
	&= \int_\H (\n_ta)((1+\Delta)^m(\psi \n_t u - 2\n_Z u)), (1+\Delta)^m(\psi \n_t u - 2\n_Z u))) d\mu_\sigma \nonumber \\*
	& \quad + 2 \Re \langle [\n_t, (1+\Delta)^m](\psi \n_t u - 2\n_Z u ), (1+\Delta)^m(\psi \n_t u  - 2\n_Z u) \rangle_{0} \nonumber \\*
	& \quad + 2 \Re \langle \n_t(\psi \n_t u - 2\n_Z u ), \psi \n_t u  - 2\n_Z u \rangle_{2m} \nonumber \\
	&\leq C\E^{2m}(u, t) + 2 \Re \langle{ Q u - \bar \Delta u, \psi \n_t u - 2\n_Z u} \rangle_{2m} \nonumber \\
	&\leq C\E^{2m}(u, t) + 2 \Re \langle{ Q u, \psi \n_t u - 2\n_Z u} \rangle_{2m} - 2 \Re \langle \bar \Delta u , \psi \n_t u - 2 \n_Z u \rangle_{2m}. \label{eq: higher_estimate_second}
\end{align}
The last term in equation \eqref{eq: higher_estimate_second} is estimated using Lemma \ref{le: part_integration} as follows:
\begin{align}
	- 2 \Re \langle \bar \Delta u , \psi \n_t u - 2 \n_Z u\rangle_{2m} 
		&= -2\Re \langle [(1+\Delta)^m, \bar \Delta] u, (1+\Delta)^m(\psi \n_t u - 2\n_Z u) \rangle_{0} \nonumber \\*
			&\quad - 2 \Re \langle \bar \Delta (1+\Delta)^m u, (1+\Delta)^m(\psi \n_t u - 2 \n_Z u) \rangle_{0} \nonumber \\
		& \leq C \E^{2m}(u,t) \nonumber \\*
			&\quad - 2\Re \langle \bar \n (1+\Delta)^m u, \bar \n (1+\Delta)^m(\psi \n_t u - 2 \n_Z u )\rangle_{0} \nonumber \\
		& = C\E^{2m}(u, t) \nonumber \\*
			& \quad - 2\Re \langle \bar \n (1+\Delta)^m u, \bar \n (1+\Delta)^m (\psi  \n_tu)\rangle_0 \nonumber \\*
			& \quad + 4 \Re \langle \bar \n (1+\Delta)^m u, \bar \n (1+\Delta)^m \n_Z u \rangle_{0} \nonumber \\
		& \leq C \E^{2m}(u, t) \nonumber \\*
			& \quad - 2\Re \langle \bar \n (1+\Delta)^m u, [\bar \n (1+\Delta)^m, \psi]  \n_tu)\rangle_{0} \nonumber \\*
			& \quad - 2\Re \langle \bar \n (1+\Delta)^m u, \psi \bar \n [(1+\Delta)^m, \n_t]u\rangle_{0} \nonumber \\*
			& \quad - 2\Re \langle \bar \n (1+\Delta)^m u,  \psi \bar \n \n_t(1+\Delta)^m u\rangle_{0} \nonumber \\*
			& \quad + 4\Re \langle \bar \n (1+\Delta)^m u, \bar \n [(1+\Delta)^m, \n_Z] u \rangle_{0} \nonumber \\*
			& \quad + 4\Re \langle \bar \n (1+\Delta)^m u, \bar \n \n_Z (1+\Delta)^m u \rangle_{0} \nonumber \\
		& \leq C \E^{2m}(u,t) \nonumber \\*
			& \quad - 2\Re\langle \bar \n (1+\Delta)^m u,  \psi \bar \n \n_t(1+\Delta)^m u\rangle_{0} \nonumber \\*
			& \quad + 4\Re \langle \bar \n (1+\Delta)^m u, \bar \n \n_Z (1+\Delta)^m u \rangle_{0}. \label{eq: ee_second_first}
\end{align}
In order to estimate the remaining terms in equation \eqref{eq: ee_second_first}, we need the following observation.
For any smooth vector field $X$ such that $X \in T\H_t$ for all $t \in [0,\e)$, we have $[Z, X], [\d_t, X] \in T\H_t$ for all $t \in [0, \e)$. 
It follows that $[\n_Z, \n_X] = R^\n(Z, X) + \n_{[Z, X]}$ and $[\n_t, \n_X] = R^\n(\d_t, X) + \n_{[\d_t, X]}$ are first order differential operators only differentiating in $\H_t$-direction. 
Using this, it is clear that
\[
	\d_Z \left( \g \otimes a (\bar \n v, \bar \n v) \right) - 2\Re \left( \g \otimes a (\bar \n v, \bar \n \n_Z v) \right)
\]
and 
\[
	\d_t \left( \g \otimes a (\bar \n v, \bar \n v) \right) - 2 \Re \left(\g \otimes a (\bar \n v, \bar \n \n_t v) \right)
\]
only depend on first order spatial derivatives in $v$ and we conclude that
\begin{align}
	\abs{ \int_\H \d_Z \left( \g \otimes a (\bar \n v, \bar \n v) \right) - 2\Re \left( \g \otimes a (\bar \n v, \bar \n \n_Z v) \right)  d\mu_\sigma} 
		&\leq C \norm{v}^2_{1}, \nonumber \\*
	\abs{\int_\H \d_t \left( \g \otimes a (\bar \n v, \bar \n v) \right) - 2 \Re \left( \g \otimes a (\bar \n v, \bar \n \n_t v) \right) d\mu_\sigma} 
		&\leq C \norm{v}^2_{1}. \label{eq: t-estimate}
\end{align}
Using this with $v = (1+\Delta)^m u$ and Stokes' theorem, we can continue the estimate \eqref{eq: ee_second_first} as
\begin{align}
	- 2 \Re \langle \bar \Delta u , \psi \n_t u - 2 \n_Z u\rangle_{2m} 
		& \leq C \E^{2m}(u,t) - \frac{d}{dt} \norm{\sqrt \psi \bar \n (1 + \Delta)^m u}_0^2 \nonumber \\*
			& \quad + \int_{\H} (\d_t\psi) \abs{\bar \n(1 + \Delta)^m u}_{\g \otimes a}^2 d\mu_\sigma \nonumber \\*
			& \quad + C \int_\H \d_Z \abs{\bar \n (1+\Delta)^m u}_{\g \otimes a}^2d\mu_\sigma \nonumber \\ 
		& \leq C \E^{2m}(u,t) - \frac{d}{dt} \norm{\sqrt \psi \bar \n (1 + \Delta)^m u}_0^2 \nonumber \\*
			& \quad - C \int_{\H} \div_\sigma(Z)\abs{\bar \n (1+\Delta)^m u}_{\g \otimes a}^2d\mu_\sigma \nonumber \\
		& \leq C \E^{2m}(u,t) - \frac{d}{dt} \norm{\sqrt \psi \bar \n (1 + \Delta)^m u}_0^2. \label{eq: ee_second_first_extra}
\end{align}
Combining estimates (\ref{eq: higher_estimate_second} - \ref{eq: ee_second_first_extra}) gives
\begin{align}
	\frac{d}{dt} &\left( \norm{2\n_Z u - \psi \n_t u}_{2m}^2 + \norm{\sqrt \psi \bar \n (1+\Delta)^m u}_{0}^2 \right) \nonumber \\
	&\leq C\E^{2m}(u,t) - 2 \Re \langle{Q u, 2\n_Z u - \psi \n_t u } \rangle_{2m}. \label{eq: first_estimate}
\end{align}
Let us continue with the second term in the energy, we have
\begin{align}
	\frac{d}{dt}\left( \norm{\sqrt \psi \n_t u}_{2m}^2 \right) 
	&= \int_\H (\n_ta)((1+\Delta)^m(\sqrt \psi \n_t u), (1+\Delta)^m(\sqrt \psi \n_t u) )d\mu_\sigma \nonumber \\
		& \quad + 2 \Re \langle [\n_t, (1+\Delta)^m](\sqrt \psi \n_t u), (1+\Delta)^m(\sqrt \psi \n_t u) \rangle_0 \nonumber \\
		& \quad + 2 \Re \langle \n_t \left( \sqrt \psi \n_t u \right), \sqrt \psi \n_t u \rangle_{2m} \nonumber \\
	&\leq C \E^{2m}(u, t) + 2 \Re \langle \frac1{\sqrt \psi} \n_t(\psi\n_t u), \sqrt \psi \n_t u \rangle_{2m} \nonumber \\*
	& \quad - \Re \langle \frac{\d_t \psi}{\psi} \sqrt \psi \n_t u, \sqrt \psi \n_t u \rangle_{2m} \nonumber \\
	&= C \E^{2m}(u, t) + 2 \Re \langle \frac{1}{\sqrt \psi} Q u, \sqrt \psi \n_t u\rangle_{2m} + 4 \Re \langle \frac{1}{\sqrt \psi} \n_Z \n_t u, \sqrt \psi \n_t u\rangle_{2m} \nonumber \\*
	& \quad + 4 \Re \langle \frac{1}{\sqrt \psi}\n_{[\d_t, Z]} u, \sqrt \psi \n_t u\rangle_{2m} - 2\Re \langle \frac{1}{\sqrt \psi} \bar \Delta u, \sqrt \psi \n_t u\rangle_{2m} \nonumber \\*
	& \quad - \Re \langle \frac{\d_t \psi}{\psi} \sqrt \psi \n_t u, \sqrt \psi \n_t u \rangle_{2m} . \label{eq: first_higher_estimate}
\end{align}
We estimate the third term of equation \eqref{eq: first_higher_estimate} as
\begin{align}
	 4 \Re \langle \frac{1}{\sqrt \psi} &\n_Z \n_t u, \sqrt \psi \n_t u\rangle_{2m} \nonumber \\*
	&= 4 \Re \langle \frac 1 \psi \n_Z(\sqrt \psi \n_t u), \sqrt \psi \n_t u \rangle_{2m} + 2 \Re \langle \d_Z(\frac1 \psi) \sqrt \psi \n_t u, \sqrt \psi \n_t u \rangle_{2m} \nonumber \\
	& = 4 \Re \langle [(1+ \Delta)^m, \frac{1}{\psi}]\n_Z(\sqrt \psi \n_t u), (1+\Delta)^m(\sqrt \psi \n_t u) \rangle_{0} \nonumber \\*
	& \quad + 4 \Re \langle \frac1 \psi [(1 + \Delta)^m, \n_Z](\sqrt{\psi}\n_tu), (1+\Delta)^m(\sqrt \psi \n_t u)\rangle_{0} \nonumber \\*
	& \quad + 2 \int_\H \frac1 \psi \d_Z \abs{(1 + \Delta)^m(\sqrt \psi \n_t u)}^2_{a} d\mu_\sigma \nonumber \\*
	& \quad - 2 \int_\H \frac1 \psi (\n_Za)((1 + \Delta)^m(\sqrt \psi \n_t u), (1 + \Delta)^m(\sqrt \psi \n_t u)) d\mu_\sigma \nonumber \\*
	& \quad + 2 \Re \langle [(1 + \Delta)^m, \d_Z(\frac{1}{\psi})](\sqrt \psi \n_tu), (\sqrt \psi \n_tu) \rangle_{0} \nonumber \\
	& \quad + 2 \int_\H \d_Z(\frac1 \psi) \abs{(1 + \Delta)^m(\sqrt \psi \n_t u)}_a^2 d\mu_\sigma \nonumber \\*
	& \leq \frac{C}{t}\E^{2m}(u,t) + 2 \int_\H \d_Z\left( \frac1 \psi \abs{(1 + \Delta)^m(\sqrt \psi \n_t u)}_a^2 \right) d\mu_\sigma \nonumber \\
	& \leq \frac{C}{t}\E^{2m}(u,t) - 2 \int_{\H} \frac{\div_\sigma(Z)}{\psi} \abs{(1 + \Delta)^m(\sqrt \psi \n_t u)}^2_{a} d\mu_\sigma \nonumber \\*
	& \leq \frac{C}{t}\E^{2m}(u,t).
\end{align}
The fourth term in equation \eqref{eq: first_higher_estimate} is estimated as
\begin{align}
	2 \Re \langle \frac{1}{\sqrt \psi} 2\n_{[\d_t, Z]} u, \sqrt \psi \n_t u\rangle_{2m} &\leq C\norm{[(1+\Delta)^m, \frac{1}{\sqrt \psi}] \n_{[\d_t, Z]}u}_{0} \sqrt{\E^{2m}(u,t)} \nonumber \\*
		& \quad + C\norm{\frac{1}{\sqrt \psi} (1+\Delta)^m \n_{[\d_t, Z]}u}_{0} \sqrt{\E^{2m}(u,t)} \nonumber \\
		&\leq \frac{C}{\sqrt{t}}\E^{2m}(u,t) \label{eq: ee_first_fourth}
\end{align}
since $[\d_t, Z] \in T\H_t$ for all $t \in [0,\epsilon)$. The fifth term in equation \eqref{eq: first_higher_estimate} is estimated using Lemma \ref{le: part_integration} as
\begin{align}
	-2 \Re \langle \frac{1}{\sqrt \psi} \bar \Delta u, \sqrt \psi \n_t u\rangle_{2m} 
		&= -2 \Re \langle [(1+\Delta)^m, \frac{1}{\sqrt \psi}] \bar \Delta u, (1+\Delta)^m(\sqrt \psi \n_t u) \rangle_{0} \nonumber \\*
		& \quad - 2 \Re \langle \frac{1}{\sqrt \psi} [(1+\Delta)^m, \bar \Delta] u, (1+\Delta)^m(\sqrt \psi \n_t u) \rangle_{0} \nonumber \\*
		& \quad - 2 \Re \langle \bar \Delta (1+\Delta)^m u, \frac{1}{\sqrt \psi} (1+\Delta)^m(\sqrt \psi \n_t u) \rangle_{0} \nonumber \\
	& \leq \frac{C}{\sqrt t} \E^{2m}(u,t) \nonumber \\*
		& \quad - 2 \Re \langle \bar \n (1+\Delta)^m u, \bar \n(\frac{1}{\sqrt \psi} (1+\Delta)^m(\sqrt \psi \n_t u))\rangle_{0} \nonumber \\
	& = \frac{C}{\sqrt t} \E^{2m}(u,t) \nonumber \\*
		& \quad - 2 \Re \langle \bar \n (1+\Delta)^m u, [\bar \n, \frac{1}{\sqrt \psi}] (1+\Delta)^m(\sqrt \psi \n_t u)\rangle_{0} \nonumber \\*
		& \quad - 2 \Re \langle \bar \n (1+\Delta)^m u, \frac{1}{\sqrt\psi}\bar \n (1+\Delta)^m(\sqrt \psi \n_t u)\rangle_{0} \nonumber \\
	& \leq \frac{C}{\sqrt t} \E^{2m}(u,t) \nonumber \\*
		& \quad - 2 \Re \langle \bar \n (1+\Delta)^m u, \frac{1}{\sqrt\psi}[\bar \n (1+\Delta)^m, \sqrt \psi] \n_t u\rangle_{0} \nonumber \\
		& \quad - 2 \Re \langle \bar \n (1+\Delta)^m u, \bar \n [(1+\Delta)^m, \n_t] u\rangle_{0} \nonumber \\
		& \quad - 2 \Re \langle \bar \n (1+\Delta)^m u, \bar \n \n_t (1+\Delta)^m u\rangle_{0} \nonumber \\
	& \leq \frac{C}{\sqrt t} \E^{2m}(u,t) - \frac{d}{dt}\norm{\bar \n(1+ \Delta)^m u}_0^2, \label{eq: ee_first_fifth}
\end{align}
where we in the last line used equation \eqref{eq: t-estimate}.
Using that $\frac{\d_t \psi}{\psi} > 0$, the last term in equation \eqref{eq: first_higher_estimate} is estimated as
\begin{align}
	- \Re \langle \frac{\d_t \psi}{\psi} \sqrt \psi \n_t u, \sqrt \psi \n_t u \rangle_{2m} 
		&= - \Re \langle (1+\Delta)^m (\frac{\d_t\psi}{\psi} \sqrt \psi \n_t u), (1+\Delta)^m(\sqrt \psi \n_t u) \rangle_{0} \nonumber \\
		&= - \Re \langle [(1+\Delta)^m, \frac{\d_t \psi}{\psi}] (\sqrt \psi \n_t u), (1+\Delta)^m(\sqrt \psi \n_t u) \rangle_{0} \nonumber \\*
			&\quad - \int_{\H}\frac{\d_t \psi}{\psi} \abs{(1+\Delta)^m (\sqrt \psi \n_t u)}_a^2 d\mu_\sigma \nonumber \\
		&\leq C\E^{2m}(u,t). \label{eq: ee_first_first}
\end{align}
Combine the estimates (\ref{eq: first_higher_estimate} - \ref{eq: ee_first_first}) to get
\begin{align}
	\frac{d}{dt}\left(\norm{\sqrt \psi \n_t u}_{2m}^2 + \norm{\bar \n (1+\Delta)^m u}_0^2 \right) 
		&\leq \frac{C}t \E^{2m}(u,t) + 2 \Re \langle \frac{1}{\sqrt \psi} Q u, \sqrt \psi \n_t u\rangle_{2m}. \label{eq: second_estimate}
\end{align}
The last term in the energy is estimated as
\begin{align}
	\frac{d}{dt} \left(\norm{u}_{2m}^2 \right) 
		&=\int_{\H} (\n_t a)((1+\Delta)^mu, (1+\Delta)^mu) d\mu_\sigma \nonumber \\*
			& \quad + \Re \langle [\n_t, (1+\Delta)^m]u, (1+\Delta)^mu \rangle_{0}+ 2 \Re \langle \n_t u, u \rangle_{2m} \nonumber \\
		&\leq C\E^{2m}(u,t) +  2 \Re \langle \frac{1}{\sqrt \psi}[\sqrt \psi, (1 + \Delta)^m]\n_t u , (1 +\Delta)^mu \rangle_{0} \nonumber \\*
			&\quad + 2 \Re \langle \frac{1}{\sqrt \psi}(1 + \Delta)^m(\sqrt \psi \n_t u) , (1 +\Delta)^m u \rangle_{0} \nonumber \\
		&\leq \frac{C}{\sqrt t}\E^{2m}(u,t). \label{eq: last_estimate}
\end{align}
Combining the estimates \eqref{eq: first_estimate}, \eqref{eq: second_estimate} and \eqref{eq: last_estimate} proves the assertion.
\end{proof}

To prove Theorem \ref{thm: Energy1} using Lemma \ref{lemma: Q-estimate} is now straightforward.

\begin{proof}[Proof of Theorem \ref{thm: Energy1}]
Since both $Q$ and $P$ are wave operators, $R := Q - P$ is a first order differential operator. Inserting $Q = P + R$ into Lemma \ref{lemma: Q-estimate} gives
\begin{align*}
		\frac{d}{dt} \E^{2m}(u,t)& \leq \frac{C}{t}\E^{2m}(u,t) - 2 \Re \langle{Pu+R u, 2\n_Z u - \psi \n_t u } \rangle_{2m} \\*
			&\quad + 2 \Re \langle \frac{1}{\sqrt \psi} (Pu+R u), \sqrt \psi \n_t u\rangle_{2m} \\*
			&\leq \frac{C}{t}\E^{2m}(u,t) + \frac{C}{\sqrt t}\norm{Pu + Ru}_{2m}\sqrt{\E^{2m}(u,t)}\\*
			&\quad + C \norm{[(1 + \Delta)^m, \frac{1}{\sqrt \psi}] (Pu+R u)}_0 \sqrt{\E^{2m}(u,t)} \\*
			&\leq \frac Ct \E^{2m}(u,t) + \frac{C}{\sqrt t}(\norm{Pu}_{2m} + \norm{Ru}_{2m})\sqrt{\E^{2m}(u,t)},
\end{align*}
by Lemma \ref{le: commutators2}.
Now, since $R$ is a differential operator of first order, we can estimate 
\begin{align*}
	\norm{Ru}_{2m} 
		& \leq C \norm{u}_{2m+1} + C\norm{\n_tu}_{2m} \\*
		& \leq \frac{C}{\sqrt t} \sqrt{\E^{2m}(u,t)},
\end{align*}
by Lemma \ref{le: Sobolev_vs_energy}.
Altogether, we have shown that 
\begin{align*}
	\frac{d}{dt} \E^{2m}(u,t)& \leq \frac Ct \E^{2m}(u,t) + \frac{C}{\sqrt t} \norm{Pu}_{2m} \sqrt{\E^{2m}(u,t)},
\end{align*}
which in turn implies that
\begin{align*}
	\frac{d}{dt} \sqrt{\E^{2m}(u,t)} & \leq \frac Ct \sqrt{\E^{2m}(u,t)} + \frac{C}{\sqrt t} \norm{Pu}_{2m},
\end{align*}
In other words,
\begin{align*}
	\frac{d}{dt}\left(\frac{\sqrt{\E^{2m}(u,t)}}{t^C} \right) \leq \frac{C}{t^{C+1/2}} \norm{Pu}_{2m}.
\end{align*}
Integrating this proves the statement.
\end{proof}

\section{Proof of existence and uniqueness given an asymptotic solution} \label{sec: global_given_asymptotic}

The purpose of this section is to prove Theorem \ref{mainthm1}.
The main ingredient in the proof is the energy estimate, Theorem \ref{thm: Energy1}.
The idea is to solve a sequence of Cauchy problems with initial data on Cauchy hypersurfaces approaching the Cauchy horizon. 
The initial data is expressed in terms of the asymptotic solution.
A careful use of the energy estimate shows that the sequence converges to an actual unique solution of the wave equation.
The solution will then be shown to coincide with the asymptotic solution up to any order at the Cauchy horizon, which also shows that it is smooth up to the Cauchy horizon.

Let us fix $m, N \in \N$ such that $N > D_m$, where $D_m > 0$ is the constant from Theorem \ref{thm: Energy1}. 
For each $\tau \in (0, \epsilon)$, define $v_\tau \in C^\infty((0, \epsilon)\times \H, F)$ by solving the Cauchy problem
\begin{align*}
	P v_\tau &= f, \\
	v_\tau(\tau, \cdot) &= w^N(\tau, \cdot), \\
	\n_t v_\tau(\tau, \cdot) &= \n_t w^N(\tau, \cdot).
\end{align*}
Since $(0, \epsilon) \times \H$ is globally hyperbolic, \cite{BaerGinouxPfaeffle2007}*{Thm. 3.2.11} guarantees the existence of a unique solution.

\begin{lemma}[Comparison of $v_\tau$ and $w^N$] \label{le: comparison_sequence}
There is a constant $C > 0$ (depending on $m$ and $N$) such that
\[
	\norm{(v_\tau - w^N)(t, \cdot)}_{2m+1} + \sqrt t \norm{\n_t(v_\tau - w^N)(t, \cdot)}_{2m} \leq Ct^{N + \frac32}
\]
for all $\tau, t \in (0, \e)$ such that $\tau \leq t$.
\end{lemma}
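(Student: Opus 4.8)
The plan is to apply the energy estimate of Theorem \ref{thm: Energy1} to the difference $u := v_\tau - w^N$. By construction $v_\tau$ and $w^N$ carry the same Cauchy data on the spacelike hypersurface $\{\tau\} \times \H$, so $u(\tau, \cdot) = 0$ and $\n_t u(\tau, \cdot) = 0$, while $P u = P v_\tau - P w^N = f - P w^N$. Choosing $t_0 = \tau$ and $t_1 = t$ in Theorem \ref{thm: Energy1} — which is legitimate because $0 < \tau \le t < \e$ and $v_\tau - w^N$ is smooth on all of $(0,\e) \times \H$ (the wave equation being globally solvable there, since $(0,\e)\times\H$ is globally hyperbolic with Cauchy hypersurfaces $\{t\}\times\H$) — the term involving the data at $t_0 = \tau$ drops out, leaving
\begin{align*}
	\norm{u(t, \cdot)}_{2m+1} + \sqrt{t}\,\norm{\n_t u(t, \cdot)}_{2m} \leq D_m\, t^{D_m} \int_\tau^t \frac{\norm{(Pw^N - f)(s, \cdot)}_{2m}}{s^{D_m + 1/2}}\, ds.
\end{align*}

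Next I would estimate the source term. The hypothesis $\n^k(Pw^N - f)|_\H = 0$ for $k \le N$ includes in particular the purely transversal derivatives $\n_t^k(Pw^N - f)|_\H = 0$, and since $\H = \{t = 0\}$ a short induction (using that $Pw^N - f$ already vanishes on $\H$) converts this into vanishing of the ordinary derivatives $\d_t^k(Pw^N - f)|_{t = 0} = 0$ for $k \le N$. Taylor's theorem with integral remainder in the $t$-variable, together with the fact that differentiating in the $\H_t$-directions does not lower the order of vanishing in $t$, then gives — after using compactness of $\H$ — a constant $C > 0$, independent of $\tau$ and $t$, with $\norm{(Pw^N - f)(s, \cdot)}_{2m} \le C s^{N+1}$ for all $s \in (0, \e)$.

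Finally I would carry out the integration. Since $N > D_m$, the exponent $N + \tfrac12 - D_m$ is strictly larger than $-1$, so
\begin{align*}
	t^{D_m}\int_\tau^t \frac{\norm{(Pw^N - f)(s, \cdot)}_{2m}}{s^{D_m + 1/2}}\, ds \leq C\, t^{D_m}\int_\tau^t s^{N + 1/2 - D_m}\, ds \leq \frac{C}{N + \tfrac32 - D_m}\, t^{N + 3/2},
\end{align*}
where the last inequality uses $N + \tfrac32 - D_m > 0$ and $\tau \le t$. Combined with the first display this is precisely the asserted estimate, with a new constant depending only on $m$ and $N$ (and the fixed data $w^N$, $f$, $P$ and the geometry).

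The step I expect to require the most care is the middle one: upgrading the vanishing of the covariant jet of $Pw^N - f$ along the \emph{lightlike} hypersurface $\H$ to a \emph{uniform} polynomial decay rate for its Sobolev norm on the leaves $\H_t$. Everything else is a direct substitution into Theorem \ref{thm: Energy1}; note in particular that since that theorem is stated for an arbitrary Sobolev norm, no identification of norms is needed here.
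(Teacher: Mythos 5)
Your proposal is correct and follows essentially the same route as the paper: apply Theorem \ref{thm: Energy1} with $t_0=\tau$, $t_1=t$ (the initial term vanishing because $v_\tau$ and $w^N$ share Cauchy data at $t=\tau$), bound $\norm{(f-Pw^N)(s,\cdot)}_{2m}\le Cs^{N+1}$ from the order-$N$ vanishing at $\H$, and integrate using $N>D_m$. The only cosmetic difference is how the $s^{N+1}$ bound is obtained — you use Taylor expansion in $t$ after passing from covariant to coordinate derivatives, while the paper differentiates $\norm{(f-Pw^N)(t,\cdot)}_{2m}^2$ up to order $2N+1$ in $t$ — and both are valid.
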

\begin{proof}

Note that at time $\tau$, we have
\[
	\norm{(v_\tau - w^N)(\tau, \cdot)}_{2m+1} + \sqrt \tau \norm{\n_t(v_\tau - w^N)(\tau, \cdot)}_{2m} = 0.
\]
Therefore Theorem \ref{thm: Energy1} with $t_0 = \tau$ and $t_1 = t$ implies that there is a $D_m > 0$ such that
\begin{align}
	\norm{(v_\tau - w^N)(t, \cdot)}_{2m+1} + \sqrt t \norm{\n_t(v_\tau - w^N)(t, \cdot)}_{2m} & \leq D_m t^{D_m}\int_\tau^t \frac{\norm{(f-Pw^N)(s, \cdot)}_{2m}}{s^{D_m + 1/2}}ds \nonumber \\
		& \leq D_m t^{D_m}\int_0^t \frac{\norm{(f-Pw^N)(s, \cdot)}_{2m}}{s^{D_m + 1/2}}ds. \label{eq: crucial integral}
\end{align}
Note that
\[
	\left(\frac{d}{dt}\right)^k \norm{(f - P w^N)(t, \cdot)}_{2m}^2 = \sum_{j=0}^k \genfrac(){0pt}{0}{k}{j} \langle (\n_t)^j (f - P w^N) (t, \cdot), (\n_t)^{k-j} (f - P w^N)(t, \cdot) \rangle_{2m}.
\]
By the assumptions in Theorem \ref{mainthm1}, we conclude that 
\[
	\left(\frac{d}{dt}\right)^k  \norm{(f - P w^N)(t, \cdot)}_{2m}^2|_{t=0} = 0
\]
for every $k \leq 2N+1$. 
Therefore there is a constant $C>0$ such that 
\begin{equation}
	\norm{(f - P w^N)(t, \cdot)}_{2m} \leq C t^{N+1} \label{eq: asympt}
\end{equation}
for all $t \in [0, \e)$. 
Since $N > D_m$, the integral in equation \eqref{eq: crucial integral} is bounded.
Therefore we may insert the bound \eqref{eq: asympt} into the integral and conclude the statement.
\end{proof}

The next step is to use the above lemma together with Rellich's Theorem to show that a subsequence $v_{\tau_j}$ converges to a limit $u$.

\begin{lemma}[The local existence]
There is a 
\[
	u \in C^0([0, \e), H^{2m}(\H)) \cap C^1([0, \e), H^{2m-1}(\H))
\]
such that
\[
	Pu = f
\]
and there is a constant $C > 0$ (depending on $m$ and $N$) such that
\[
	\norm{(u - w^N)(t, \cdot)}_{2m} + \sqrt t \norm{\n_t(u - w^N)(t, \cdot)}_{2m-1} \leq Ct^{N + \frac32}.
\]
\end{lemma}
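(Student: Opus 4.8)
The plan is to realise $u$ as a limit of the family $(v_\tau)_\tau$ and to push the bound of Lemma \ref{le: comparison_sequence} through to the limit. First I would fix a compact subinterval $[\delta, \e_0] \subset (0, \e)$ and restrict attention to parameters $\tau \leq \delta$. Since $w^N$ is smooth, Lemma \ref{le: comparison_sequence} shows that the family $\{v_\tau\}$ is bounded in $C^0([\delta, \e_0], H^{2m+1}(\H))$ and that $\{\n_t v_\tau\}$ is bounded in $C^0([\delta, \e_0], H^{2m}(\H))$; the latter makes the maps $t \mapsto v_\tau(t, \cdot)$ uniformly Lipschitz into $H^{2m}(\H)$. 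Because the inclusion $H^{2m+1}(\H) \hookrightarrow H^{2m}(\H)$ is compact by Rellich's theorem, the Arzel\`a--Ascoli theorem yields a subsequence converging in $C^0([\delta, \e_0], H^{2m}(\H))$. Exhausting $(0, \e)$ by such intervals and letting $\tau \downarrow 0$, a diagonal argument produces a subsequence $(v_{\tau_j})$ and a limit $u$ with $v_{\tau_j} \to u$ in $C^0_{\mathrm{loc}}((0, \e), H^{2m}(\H))$.

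Next I would upgrade this to $C^1$ and check the equation. On each $[\delta, \e_0]$ the hypersurfaces $\{t\} \times \H$ are spacelike Cauchy hypersurfaces and $\psi$ is bounded below by a positive constant, so the wave equation $P v_{\tau_j} = f$ can be solved for $\n_t^2 v_{\tau_j}$ in terms of derivatives of $v_{\tau_j}$ of order at most one in $\d_t$ and at most two overall, together with $f$. This gives a uniform bound on $\n_t^2 v_{\tau_j}$ in $C^0([\delta, \e_0], H^{2m-2}(\H))$, hence equicontinuity of $\n_t v_{\tau_j}$ with values in $H^{2m-2}(\H)$; combining this with the uniform $H^{2m}(\H)$-bound on $\n_t v_{\tau_j}$ and Rellich's theorem, a further subsequence (absorbed into the diagonal) satisfies $\n_t v_{\tau_j} \to \n_t u$ in $C^0_{\mathrm{loc}}((0, \e), H^{2m-1}(\H))$, so that $u \in C^1((0, \e), H^{2m-1}(\H))$. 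Passing to the limit in $P v_{\tau_j} = f$ term by term then gives $P u = f$ on $(0, \e) \times \H$ (classically wherever $u$ is smooth, and distributionally in general).

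It remains to extend $u$ continuously across $t = 0$ and to obtain the estimate. For fixed $t \in (0, \e)$ and all $j$ with $\tau_j \leq t$, Lemma \ref{le: comparison_sequence} gives $\norm{(v_{\tau_j} - w^N)(t, \cdot)}_{2m} \leq \norm{(v_{\tau_j} - w^N)(t, \cdot)}_{2m+1} \leq C t^{N+3/2}$ and $\sqrt{t}\, \norm{\n_t(v_{\tau_j} - w^N)(t, \cdot)}_{2m-1} \leq \sqrt{t}\, \norm{\n_t(v_{\tau_j} - w^N)(t, \cdot)}_{2m} \leq C t^{N+3/2}$; letting $j \to \infty$ yields $\norm{(u - w^N)(t, \cdot)}_{2m} \leq C t^{N+3/2}$ and $\sqrt{t}\, \norm{\n_t(u - w^N)(t, \cdot)}_{2m-1} \leq C t^{N+3/2}$, which is the asserted estimate. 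Since $w^N$ is continuous into $H^{2m}(\H)$ and $\n_t w^N$ is continuous into $H^{2m-1}(\H)$ up to $t = 0$, these two bounds force $u(t, \cdot) \to w^N(0, \cdot)$ in $H^{2m}(\H)$ and $\n_t u(t, \cdot) \to \n_t w^N(0, \cdot)$ in $H^{2m-1}(\H)$ as $t \to 0$; defining $u(0, \cdot) := w^N(0, \cdot)$ therefore gives $u \in C^0([0, \e), H^{2m}(\H)) \cap C^1([0, \e), H^{2m-1}(\H))$.

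The main obstacle is the degeneracy of the metric at the Cauchy horizon, which in Theorem \ref{thm: Energy1} and hence in Lemma \ref{le: comparison_sequence} appears as the weight $\sqrt{t}$ in front of $\n_t$: this weight blows up as $t \to 0$, so the uniform bounds only yield compactness on compact subsets of $(0, \e)$, and the continuous extension to $t = 0$ must be argued separately, using the decay rate $t^{N+3/2}$ rather than any a priori control at the horizon. A secondary point is that identifying the limit of $\n_t v_{\tau_j}$ with $\n_t u$ requires equicontinuity of $\n_t v_{\tau_j}$, which is why one first bounds $\n_t^2 v_{\tau_j}$ by solving the equation in the non-degenerate region where $\psi > 0$.
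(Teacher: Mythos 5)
Your argument is correct, but it reaches the limit $u$ by a genuinely different route than the paper. The paper fixes a single time $T \in (0,\e)$, applies Lemma \ref{le: comparison_sequence} and Rellich's theorem only on the slice $\{T\}\times\H$ to extract limiting Cauchy data $(a_0,a_1)\in H^{2m}(\H)\times H^{2m-1}(\H)$, defines $u$ as the solution of the Cauchy problem with this data at time $T$ (via \cite{BaerWafo2014}*{Thm. 13}, adapted to the null time function through Theorem \ref{thm: Energy1}), and then uses the energy estimate once more to conclude $v_{\tau_j}\to u$ in $C^0((0,\e),H^{2m}(\H))\cap C^1((0,\e),H^{2m-1}(\H))$; the comparison with $w^N$ and the extension to $t=0$ then go exactly as in your last paragraph. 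You instead avoid invoking any external well-posedness theorem: you obtain a convergent subsequence directly by a vector-valued Arzel\`a--Ascoli/Rellich argument on compact subintervals of $(0,\e)$, recover $\n_t u$ from uniform bounds on $\n_t^2 v_\tau$ gotten by solving the equation in the region where $\psi>0$, and pass to the limit in $Pv_{\tau_j}=f$. This is more self-contained (only Lemma \ref{le: comparison_sequence} and the splitting $P=\psi\n_t^2+L_1\n_t+L_2$ enter), at the cost of some compactness-in-time machinery that the paper's route sidesteps, and the paper's $u$ is a solution by construction rather than by a limiting argument. If you write your version up, spell out two steps you currently gloss over: the upgrade from equicontinuity into $H^{2m-2}(\H)$ plus boundedness in $H^{2m}(\H)$ to convergence in $C^0_{\mathrm{loc}}((0,\e),H^{2m-1}(\H))$ (use the interpolation inequality $\norm{\cdot}_{2m-1}\leq C\norm{\cdot}_{2m}^{1/2}\norm{\cdot}_{2m-2}^{1/2}$ together with Arzel\`a--Ascoli and pointwise Rellich compactness), and the identification of the limit of $\n_t v_{\tau_j}$ with $\n_t u$ (fundamental theorem of calculus for Banach-space-valued curves); note also that with the regularity you have, the limiting equation holds in $C^0_{\mathrm{loc}}((0,\e),H^{2m-2}(\H))$ and not merely distributionally, which is the form used in the subsequent regularity lemma.
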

\begin{proof}
Fix $T \in (0,\e)$. 
The previous lemma implies that $(v_{\tau}, \n_t v_{\tau})(T, \cdot)$ is bounded in $H^{2m+1}(\H) \times H^{2m}(\H)$. 
Therefore Rellich's Theorem implies that there is a sequence $\tau_j \to 0$ and $(a_0, a_1) \in H^{2m}(\H) \times H^{2m-1}(\H)$ such that
\begin{equation} \label{eq: conv_initial_data}
	(v_{\tau_j}, \n_t v_{\tau_j})(T, \cdot) \to (a_0, a_1) \in H^{2m}(\H) \times H^{2m-1}(\H),
\end{equation}
as $j \to \infty$. 
By Theorem \cite{BaerWafo2014}*{Thm. 13}, we may define $u \in C^0((0, \e), H^{2m}(\H)) \cap C^1((0, \e), H^{2m-1}(\H))$ by solving
\begin{align*}
	P u &= f, \\
	u(T, \cdot) &= a_0, \\
	\n_t u(T, \cdot) &= a_1.
\end{align*}
Strictly speaking \cite{BaerWafo2014}*{Thm. 13} does not apply, since the time function in \cite{BaerWafo2014} is not a null time function. 
However, it is straightforward to modify the proof of \cite{BaerWafo2014}*{Thm. 13} using our energy estimate Theorem \ref{thm: Energy1}, since we only consider $t > 0$.

Since 
\[
	\norm{(v_{\tau_j} - u)(T, \cdot)}_{2m} + \norm{\n_t(v_{\tau_j} - u)(T, \cdot)}_{2m-1} \to 0
\]
as $j \to \infty$ and $P( v_{\tau_j} - u) = 0$, Theorem \ref{thm: Energy1} implies that
\[
	v_{\tau_j} \to u
\]
in $C^0((0, \e), H^{2m}(\H)) \cap C^1((0, \e), H^{2m-1}(\H))$ as $j \to \infty$. 
In particular, Lemma \ref{le: comparison_sequence} implies the estimate
\begin{align*}
	&\norm{(u - w^N)(t, \cdot)}_{2m} + \sqrt t \norm{\n_t(u - w^N)(t, \cdot)}_{2m-1} \\
	&\quad = \lim_{j \to \infty} \left( \norm{(v_{\tau_j} - w^N)(t, \cdot)}_{2m} + \sqrt t \norm{\n_t(v_{\tau_j} - w^N)(t, \cdot)}_{2m-1} \right) \\
	&\quad \leq Ct^{N + \frac32}
\end{align*}
for each $t \in (0, \e)$. 
Since $N > 0$, we can let $t \to 0$ and conclude that $u \in C^0([0, \e), H^{2m}(\H)) \cap C^1([0, \e), H^{2m-1}(\H))$.
\end{proof}

Up to now we have kept $m, N$ fixed.
In the next lemma, we show that the obtained solution is actually independent of the choice of $m$ and $N$.

\begin{lemma}[Regularity of $u$] \label{le: localexistence}
There exists a unique $u \in C^\infty([0, \e) \times \H)$ such that
\begin{align*}
	Pu &= f, \\
	\n^k u|_{t = 0} &= \n^k w^N|_{t = 0},
\end{align*}
for all $N \in \N$ and $k \leq N$.
\end{lemma}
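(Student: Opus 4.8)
The plan is to bootstrap everything from the local existence result just obtained. First I would show that the section produced there is independent of the auxiliary parameters $m$ and $N$; then I would upgrade its regularity to $C^\infty$ up to $t=0$ by solving the wave equation for the $t$-derivatives of $u-w^N$ rather than of $u$; and finally I would read off both the asymptotic expansion and the uniqueness statement from the energy estimate. Throughout I may replace each $D_j$ by $\max_{i\le j}D_i$, so that $j\mapsto D_j$ is non-decreasing.

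\textbf{Independence of the choices.} Fix two admissible pairs $(m,N)$ and $(m',N')$ (with $N>D_m$, $N'>D_{m'}$) and call the two sections $u$, $u'$. On $(0,\e)\times\H$ both solve $Pv=f$, so $d:=u-u'$ solves $Pd=0$ there and is smooth on $(0,\e)\times\H$ by interior hyperbolic regularity (equivalently, since each $v_\tau$ is smooth and $v_{\tau_j}\to u$ in every $C^k$ on compact subsets). Splitting $d=(u-w^N)+(w^N-w^{N'})+(w^{N'}-u')$ and using the comparison estimate for the outer terms, and Taylor's theorem together with $\n^k(w^N-w^{N'})|_\H=0$ for $k\le\min(N,N')$ for the middle one, gives, with $\mu:=\min(m,m')$,
\[
	\norm{d(t,\cdot)}_{2\mu}+\sqrt t\,\norm{\n_t d(t,\cdot)}_{2\mu-1}\le C\,t^{\min(N,N')+1}.
\]
I would then apply Theorem \ref{thm: Energy1} at level $\mu-1$ on $[t_0,t_1]\subset(0,\e)$: the integral term drops since $Pd=0$, and the boundary term is dominated by $D_{\mu-1}(t_1/t_0)^{D_{\mu-1}}C\,t_0^{\min(N,N')+1}$, which tends to $0$ as $t_0\to 0$ because $\min(N,N')+1>D_{\mu-1}$. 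Hence $d\equiv 0$, so there is one well-defined $u\in C^\infty((0,\e)\times\H)$ lying, for every $m$, in $C^0([0,\e),H^{2m}(\H))\cap C^1([0,\e),H^{2m-1}(\H))$ and satisfying the comparison estimate for every admissible $N$.

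\textbf{Smoothness up to the Cauchy horizon.} This is the step I expect to be the main obstacle: by Proposition \ref{prop: time_function} the $\n_t\n_t$-coefficient of $P$ is, up to sign, $\psi$, which vanishes on $\H$, so the equation cannot be solved for $\n_t^2u$ near $t=0$. I would circumvent this by writing $P=\psi\,\n_t^2-2\n_t\n_Z+\bar\Delta+R_0$ with $R_0$ a first-order operator (using $\grad(t)=Z-\psi\d_t$ and absorbing the remaining first-order pieces into $R_0$), so that on $(0,\e)\times\H$
\[
	\psi\,\n_t^2(u-w^N)=(f-Pw^N)+2\n_t\n_Z(u-w^N)-\bar\Delta(u-w^N)-R_0(u-w^N).
\]
By the comparison estimate and \eqref{eq: asympt}, each term on the right is $O(t^{N+1})$ in $H^{2m-2}(\H)$, while $\tfrac1\psi\le\tfrac Ct$ (Remark \ref{rmk: psi_over_t}, plus $t/\psi\to 2$ in $C^\infty(\H)$ as in Lemma \ref{le: commutators2}); hence $\norm{\n_t^2(u-w^N)(t,\cdot)}_{2m-2}\le C\,t^N\to 0$, and since $\n_t^2w^N$ is continuous up to $t=0$ this yields $\n_t^2u\in C^0([0,\e),H^{2m-2}(\H))$. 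Differentiating the displayed identity repeatedly in $t$ and inducting on $j$, I would get $\n_t^ju\in C^0([0,\e),H^{2m-j}(\H))$ with $\norm{\n_t^j(u-w^N)(t,\cdot)}_{2m-j}\le C\,t^{N+2-j}$ whenever $N\ge j-1$. Given any $k\in\N$, choosing $m$ with $2m-k>k+n/2$ and $N>\max(D_m,k)$ and using $H^{2m-j}(\H)\hookrightarrow C^{k-j}(\H)$ for $j\le k$ gives $u\in C^k([0,\e)\times\H)$; as $k$ is arbitrary and $u$ is independent of the choices, $u\in C^\infty([0,\e)\times\H)$.

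\textbf{Asymptotics and uniqueness.} For a given $N$, taking $m$ large and using $\norm{\n_t^j(u-w^N)(t,\cdot)}_{2m-j}\le C\,t^{N+2-j}\to 0$ for $j\le N$ together with the now-established smoothness of $u-w^N$ up to $t=0$ forces $\d_x^\alpha\n_t^j(u-w^N)|_{t=0}=0$ for all $j\le N$ and all $\alpha$, i.e. $\n^k u|_\H=\n^k w^N|_\H$ for $k\le N$ (the hypothesis $\n^k(w^N-w^{N+1})|_\H=0$ makes this consistent across $N$). For uniqueness, if $u_1,u_2\in C^\infty([0,\e)\times\H)$ both solve the problem, then $d:=u_1-u_2$ satisfies $Pd=0$ on $(0,\e)\times\H$ and $\n^k d|_\H=0$ for all $k$, so $\norm{d(t_0,\cdot)}_{2m+1}+\sqrt{t_0}\,\norm{\n_t d(t_0,\cdot)}_{2m}$ decays faster than every power of $t_0$; feeding this into Theorem \ref{thm: Energy1} and letting $t_0\to 0$ gives $d\equiv 0$. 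This would complete the proof.
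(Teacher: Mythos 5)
Your proposal follows essentially the same route as the paper's proof: first independence of the auxiliary pair $(m,N)$, obtained by applying Theorem \ref{thm: Energy1} to the difference of two solutions produced by the preceding local existence lemma (using the comparison estimates and the telescoped identity $\n^k(w^N-w^{N'})|_\H=0$, and letting $t_0\to 0$); then smoothness up to $\H$ by solving the equation $\psi\,\n_t^2(u-w^N)=(f-Pw^N)+\dots$ for the second time derivative, dividing by $\psi$ with $\psi\sim 2t$, iterating in the number of time derivatives and increasing $m,N$ as needed; and finally reading off the asymptotics at $t=0$ and uniqueness from the energy estimate. This is exactly the structure of the paper's argument.

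The one point that needs repair is your justification for applying Theorem \ref{thm: Energy1} to $d=u-u'$ in the independence step. At that stage $d$ is only known to lie in $C^0((0,\e),H^{2\mu}(\H))\cap C^1((0,\e),H^{2\mu-1}(\H))$, and neither of your reasons for smoothness of $d$ on $(0,\e)\times\H$ is valid: the convergence $v_{\tau_j}\to u$ was established only in the fixed Sobolev topology, not in every $C^k$ on compact subsets, and \enquote{interior hyperbolic regularity} does not help, since a linear wave equation propagates but does not improve the regularity of the merely $H^{2m}\times H^{2m-1}$ data at $t=T$. The paper closes this by a one-line density argument: $C^\infty$ is dense in $C^0((0,\e),H^{2m}(\H))\cap C^1((0,\e),H^{2m-1}(\H))$ and the energy is continuous in these norms, so Theorem \ref{thm: Energy1} extends to sections of this finite regularity; with that remark your independence and uniqueness steps go through verbatim. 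Two further harmless slips: $t/\psi\to 1/2$ rather than $2$; and the middle term of your splitting only yields $\sqrt t\,\norm{\n_t(w^N-w^{N'})(t,\cdot)}_{2\mu-1}\lesssim t^{\min(N,N')+1/2}$, not $t^{\min(N,N')+1}$, which still exceeds $D_{\mu-1}$ and so does not affect the conclusion.
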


\begin{proof}
Choose an $\tilde m > m$ and an $\tilde N \in \N$ such that $\tilde N > D_{\tilde m}$ and $\tilde N > N$.
By the previous lemma, there is a section
\[
	\tilde u \in C^0([0, \e), H^{2\tilde m}(\H)) \cap C^1([0, \e), H^{2\tilde m-1}(\H))
\]
such that
\[
	P\tilde u = f
\]
and there is a constant $\tilde C > 0$ such that
\[
	\norm{(\tilde u - w^{\tilde N})(t, \cdot)}_{2m} + \sqrt t \norm{\n_t(\tilde u - w^{\tilde N})(t, \cdot)}_{2m-1} \leq \tilde Ct^{\tilde N + \frac32}.
\]
The first step is to show that $u = \tilde u$.
We have
\begin{equation} \label{eq: P u tilde u}
	P (\tilde u - u) = f - f = 0
\end{equation}
and $\tilde u - u \in C^0([0, \e), H^{2m}(\H)) \cap C^1([0, \e), H^{2m-1}(\H))$.
By the assumption in Theorem \ref{mainthm1} 
\[
	(\n_t)^k(w^{\tilde N} - w^N)|_{t = 0}
\]
for all $k \leq N$, which implies the estimate
\begin{align}
	\norm{(\tilde u - u)(t, \cdot)}_{2m} + &\sqrt t \norm{\n_t(\tilde u - u)(t, \cdot)}_{2m-1} \nonumber \\*
	& \leq \norm{(\tilde u - w^{\tilde N})(t, \cdot)}_{2m} + \sqrt t \norm{\n_t(\tilde u - w^{\tilde N})(t, \cdot)}_{2m-1} \nonumber\\*
	& \quad + \norm{(u - w^N)(t, \cdot)}_{2m} + \sqrt t \norm{\n_t(u - w^N)(t, \cdot)}_{2m-1} \nonumber \\*
	& \quad + \norm{(w^{\tilde N} - w^N)(t, \cdot)}_{2m} + \sqrt t \norm{\n_t(w^{\tilde N} - w^N)(t, \cdot)}_{2m-1} \nonumber \\
	& \leq C_1t^{N+1}\label{eq: estimate u tilde u}
\end{align}
for some constant $C_1 > 0$.
Since $C^\infty([0, \e) \times \H)$ is dense in $C^0((0, \e), H^{2m}(\H)) \cap C^1((0, \e), H^{2m-1}(\H))$ and the energy is continuous in the respective norms, Theorem \ref{thm: Energy1} applies also for sections in $C^0((0, \e), H^{2m}(\H)) \cap C^1((0, \e), H^{2m-1}(\H))$.
Theorem \ref{thm: Energy1}, equation \eqref{eq: P u tilde u} and estimate \eqref{eq: estimate u tilde u} imply that
\begin{align*}
\norm{(\tilde u - u)(t_1, \cdot)}_{2m} + &\sqrt {t_1} \norm{\n_t(\tilde u - u)(t_1, \cdot)}_{2m-1} \\
	& \leq D_m \left( \frac{t_1}{t_0} \right)^{D_m}\left( \norm{(\tilde u - u)(t_0, \cdot)}_{2m} + \sqrt {t_0} \norm{\n_t(\tilde u - u)(t_0, \cdot)}_{2m-1} \right) \\
	& \quad + D_m {t_1}^{D_m}\int_{t_0}^{t_1} \frac{\norm{P(\tilde u - u)(t, \cdot)}_{2m}}{t^{D_m + 1/2}}dt \\
	& \leq C_1D_m {t_1}^{D_m}{t_0}^{N + 1 - D_m}.
\end{align*}
Since $N > D_m$, we can let $t_0 \to 0$ and conclude that
\[
	\norm{(\tilde u - u)(t_1, \cdot)}_{2m} + \sqrt {t_1} \norm{\n_t(\tilde u - u)(t_1, \cdot)}_{2m-1} = 0,
\]
for all $t_1 \geq 0$.
Therefore $u = \tilde u \in C^0([0, \e), H^{2\tilde m}(\H)) \cap C^1([0, \e), H^{2\tilde m-1}(\H))$. 
This proves in particular the uniqueness part of the lemma.
Since $\tilde m$ was arbitrarily large, we conclude that
\[
	u \in C^0([0, \e), H^{2\tilde m}(\H)) \cap C^1([0, \e), H^{2\tilde m-1}(\H))
\]
for all large enough $\tilde m \in \N$.
By the previous lemma and since $u = \tilde u$, we have the estimate
\begin{equation} \label{eq: close_to_zero}
	\norm{(u - w^{\tilde N})(t, \cdot)}_{2\tilde m} + \sqrt t \norm{\n_t(u - w^{\tilde N})(t, \cdot)}_{2\tilde m-1} \leq C_2t^{\tilde N + \frac32}
\end{equation}
for any $\tilde m, \tilde N \in \N$ such that $\tilde m > m$ and $\tilde N > D_{\tilde m}$ and $\tilde N > N$ and $t \in (0, \e)$. 
The constant $C_2 > 0$ depends on $\tilde m$ and $\tilde N$.

By Proposition \ref{prop: time_function}, we can write $P = \psi \n_t^2 + L_1 \n_t + L_2$, where $L_1$ and $L_2$ are differential operators of first and second order respectively, which only differentiate in $\H_t$-direction. 
Since $\psi > 0$ for $t> 0$, we conclude that
\begin{equation} \label{eq: t_der}
	\n_t^2 u = \frac{1}{\psi} \left(f - L_1\n_t u - L_2 u\right)
\end{equation}
for all $t \in (0, \e)$.
This shows that $u \in C^2((0,\e), H^{m}(\H))$ for all $m \in \N$. 
Differentiating equation \eqref{eq: t_der}, one concludes that $u \in C^j((0, \e), H^m(\H))$ for all $j, m \in \N$. 
By the Sobolev embedding theorem, we conclude that $u \in C^\infty((0, \e) \times \H)$. 

What is missing is the regularity at the Cauchy horizon $t = 0$, since we cannot evaluate equation \eqref{eq: t_der} at $t = 0$.
We will use the equation
\begin{equation} \label{eq: higher_time_linear}
	\psi \n_t^2(u - w^N) = f - Pw^N - L_1 \n_t (u - w^N) - L_2 (u - w^N).
\end{equation}
From the assumptions in Theorem \ref{mainthm1}, there is a constant $C_3 > 0$ such that
\[
	\norm{(f - P w^N)(t, \cdot)}_{2m} \leq C_3t^{N +1}.
\]
Combining this with the estimate \eqref{eq: close_to_zero} shows that there is a constant $C_4 > 0$ such that
\[
	\norm{\psi \n_t^2(u - w^N)(t, \cdot)}_{2m-1} \leq C_4t^{N + 1}.
\]
This implies that
\[
	\norm{\n_t^2(u - w^N)(t, \cdot)}_{2m-1} \leq C_5t^{N},
\]
for some constant $C_5 > 0$. 
Since $w^N \in C^\infty([0, \e) \times \H)$, this shows that $u \in C^2([0, \e), H^m(\H))$ for all $m \in \N$.
Differentiating equation \eqref{eq: higher_time_linear} and using the obtained estimate on $\n_t^2(u - w^N)$ shows that $u \in C^3([0, \e), H^{m}(\H))$ for all $m \in \N$.
Iterating this and increasing $m$ and $N$ whenever necessary shows that $u \in C^j([0, \e), H^{m}(\H))$ for all $j, m \in \N$.
The Sobolev embedding theorem implies that $u \in C^\infty([0, \e) \times \H)$ as claimed.
\end{proof}

We finish the proof by going from the local to global existence and uniqueness.

\begin{proof}[Finishing the proof of Theorem \ref{mainthm1}]
By Lemma \ref{le: localexistence}, there is a unique solution $\tilde u$ on an open neighbourhood $U \cong [0, \e) \times \H$ of $\H$ in $\H \sqcup D(\S)$.
This neighbourhood contains a Cauchy hypersurface $\H_\tau$ of $D(\S)$. 
By Theorem \cite{BaerGinouxPfaeffle2007}*{Thm. 3.2.11}, we can now solve the Cauchy problem 
\begin{align*}
	P \hat u &= f \\
		\hat u|_{\H_\tau} &= \tilde u|_{\H_\tau} \\
		\n_t \hat u|_{\H_\tau} &= \n_t \tilde u|_{\H_\tau}
\end{align*}
on $D(\S)$.
Since $U \cap D(\S)$ is globally hyperbolic, it follows from Theorem \cite{BaerGinouxPfaeffle2007}*{Thm. 3.2.11} that $\hat u|_{U \cap D(\S)} = \tilde u|_{U \cap D(\S)}$. 
Pasting $\hat u$ and $\tilde u$ together gives the unique globally defined solution $u \in C^\infty(\H \sqcup D(\S))$. 
\end{proof}

\section{Proof of uniqueness for admissible wave equations} \label{sec: Uniqueness}

The idea for the proof of Theorem \ref{mainthm2} is to show that a solution to an admissible linear wave equation with trivial initial data must vanish up to any order and then apply Corollary \ref{cor: unique_cont}.

The following lemma is fundamental for the rest of this paper.
\begin{lemma} \label{le: iterative}
Let $P$ be a wave operator and write it as
\[
	P = \n^*\n + B(\n) + A
\]
for some connection $\n$ on $F$. Then for all $k \in \N$, we have
\begin{align}
	\n_t^kPu|_{t = 0} &= 2\n_V\n_t^{k+1} u|_{t = 0} + 2(k+1)\n_t^{k+1} u|_{t = 0} \nonumber \\
		& \quad - B(g(V, \cdot) \otimes \n_t^{k+1}u)|_{t = 0} + T_k(u|_{t = 0}, \n_t u|_{t = 0}, \hdots, \n_t^k u|_{t = 0}) \label{eq: iterative}
\end{align}
where $T_k$ is a linear differential operator, only differentiating in $\H$-direction.
\end{lemma}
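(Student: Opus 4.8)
The plan is a direct computation in the null time function of Proposition~\ref{prop: time_function}: write $P$ explicitly in these coordinates, apply $\n_t^k$, and keep only the terms surviving at $t=0$, where $\psi|_{t=0}=0$, $\d_t\psi|_{t=0}=2$ and $Z|_\H=-V$. In the frame $(\d_t,Z,e_2,\dots,e_n)$ with $e_i\in E$, Proposition~\ref{prop: time_function} gives $g(\d_t,\d_t)=0$, $g(\d_t,Z)=1$, $g(Z,Z)=\psi$, $g(\d_t,e_i)=g(Z,e_i)=0$, so the inverse metric is
\[
	\begin{pmatrix} -\psi & 1 & 0 \\ 1 & 0 & 0 \\ 0 & 0 & \g^{ij}\end{pmatrix}.
\]
Since $\d_t$ is geodesic, $\n^2_{\d_t,\d_t}u=\n_t^2 u$, so $\n^*\n u=\psi\,\n_t^2 u-2\n_t\n_Z u+2\n_{\n_{\d_t}Z}u-\g^{ij}\n^2_{e_i,e_j}u$. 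A short Koszul computation gives $g(\n_{\d_t}Z,\d_t)=0$ and $g(\n_{\d_t}Z,Z)=\tfrac12\d_t\psi$, hence $\n_{\d_t}Z=\tfrac12(\d_t\psi)\d_t+S$ with $S$ a ($t$-dependent) section of $E$, whence $2\n_{\n_{\d_t}Z}u=(\d_t\psi)\n_t u+2\n_S u$. Writing $\n u=\theta^0\otimes\n_t u+(\text{terms in }\n_Z u\text{ and the }\n_{e_i}u)$, where $\theta^0$ is the coframe element dual to $\d_t$, and noting $\theta^0|_\H=-g(V,\cdot)|_\H$ (since $V=-Z$ and $\psi=0$ on $\H$), we obtain on $[0,\e)\times\H$ the identity
\[
	Pu=\psi\,\n_t^2 u+(\d_t\psi)\,\n_t u+2\n_t\n_V u-2\n_t\n_{Z+V}u+B(\theta^0\otimes\n_t u)+Du,
\]
where $D$ differentiates only in $\H_t$-direction (it absorbs $2\n_S u$, $-\g^{ij}\n^2_{e_i,e_j}u$, the remaining part of $B(\n u)$, and $Au$).

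Next I would apply $\n_t^k$ to this identity and restrict to $t=0$, using two elementary facts. (i) If $f|_{t=0}=0$, Leibniz' rule shows $\n_t^k(f\cdot\n_t^j u)|_{t=0}$ involves only $u|_{t=0},\dots,\n_t^{k+j-1}u|_{t=0}$, while $\n_t^k\big((\d_tf)\,\n_t^j u\big)|_{t=0}=(\d_tf)|_{t=0}\,\n_t^{k+j}u|_{t=0}$ modulo such terms. (ii) Commuting $\n_t$ past a first-order operator that differentiates only in $\H_t$-direction produces again such an operator; this uses $[\d_t,V]=0$, the fact that $[\d_t,X]\in T\H_t$ for $X\in T\H_t$, and that $R^\n(\d_t,\cdot)$ is of order zero. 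Applying these term by term and using $\d_t\psi|_{t=0}=2$: $\psi\,\n_t^2 u$ contributes $2k\,\n_t^{k+1}u|_{t=0}$ plus $\H$-tangential terms in $u|_{t=0},\dots,\n_t^k u|_{t=0}$; $(\d_t\psi)\,\n_t u$ contributes $2\,\n_t^{k+1}u|_{t=0}$ plus such terms; $2\n_t\n_V u$ contributes $2\n_V\n_t^{k+1}u|_{t=0}$ plus such terms (by (ii)); $-2\n_t\n_{Z+V}u$ contributes only such terms (write $Z+V=t\tilde W$ with $\tilde W$ a section of $T\H_t$ and apply (i)); $B(\theta^0\otimes\n_t u)$ contributes $B(\theta^0\otimes\n_t^{k+1}u)|_{t=0}=-B(g(V,\cdot)\otimes\n_t^{k+1}u)|_{t=0}$ plus such terms; and $Du$ contributes only such terms. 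Summing, the bare $\n_t^{k+1}u|_{t=0}$ coefficient is $2k+2=2(k+1)$, the leftover explicit terms are $2\n_V\n_t^{k+1}u|_{t=0}-B(g(V,\cdot)\otimes\n_t^{k+1}u)|_{t=0}$, and everything else assembles into $T_k(u|_{t=0},\dots,\n_t^k u|_{t=0})$ with $T_k$ differentiating only in $\H$-direction. This is precisely \eqref{eq: iterative}.

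The main obstacle is the bookkeeping in the last step: one has to be sure that no $\n_t^{k+1}u|_{t=0}$ (or higher) slips in through the repeated commutations --- exactly where $[\d_t,V]=0$ and the $\H_t$-tangency of $[\d_t,\cdot]$ are essential --- and that every remaining term depends on $u$ only through $u|_{t=0},\dots,\n_t^k u|_{t=0}$ and their $\H$-tangential derivatives. A convenient way to organise this is to verify the $k=0$ case directly from the displayed identity and then run the general $k$ by the same Leibniz-and-commutator computation; equivalently, one may differentiate the (globally valid) identity $k$ times before restricting to $t=0$, which comes to the same bookkeeping.
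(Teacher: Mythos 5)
Your argument is correct and is essentially the paper's own proof: both express $P$ in the null time function of Proposition \ref{prop: time_function} (you work in the frame $(\d_t,Z,e_2,\dots,e_n)$, the paper in the equivalent frame $(\d_t,\grad(t),e_2,\dots,e_n)$), isolate the terms $\psi\n_t^2u$, $(\d_t\psi)\n_tu$, $-2\n_t\n_Zu$ and $B(g(\grad(t),\cdot)\otimes\n_tu)$, and then apply $\n_t^k$ and restrict to $t=0$ using $\psi|_{t=0}=0$, $\d_t\psi|_{t=0}=2$, $Z|_\H=-V$ and $[\d_t,V]=0$, absorbing all remaining contributions into an operator differentiating only in $\H$-direction. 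The only blemish is that your first display for $\n^*\n u$ drops the zeroth-order curvature term $R^\n(Z,\d_t)u$ arising when symmetrising $\n^2_{\d_t,Z}+\n^2_{Z,\d_t}$; this is harmless, since it is absorbed into $D$ (and hence into $T_k$) exactly like the other lower-order terms, just as in the paper.
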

\begin{proof}
By Proposition \ref{prop: time_function}, the metric is given by
\[
	g_{\a \b} = \begin{pmatrix}
	0 & 1 & 0 \\
	1 & -\psi & 0 \\
	0 & 0 & \g_{ij}
	\end{pmatrix}
	\Rightarrow
	g^{\a \b} = \begin{pmatrix}
	\psi & 1 & 0 \\
	1 & 0 & 0 \\
	0 & 0 & \g^{ij}
	\end{pmatrix},
\]
for $i,j \geq 2$, in the basis $(\d_t, \grad(t), e_2, \hdots, e_n)$, where $e_2, \hdots, e_n \in E$.
Recall from Section \ref{sec: null time function} that $\grad(t) = -\psi \d_t + Z$, we have
\begin{align*}
	\n^*\n u &= - \psi {\n_t}^2u - \n^2_{\d_t, \grad(t)}u - \n^2_{\grad(t), \d_t} u - \sum_{i,j \geq 2}\g^{ij}\n^2_{e_i, e_j} u \\
	&= - \psi {\n_t}^2u - 2\n_t\n_{\grad(t)}u + 2 \n_{\n_t\grad(t)}u - R^\n(\grad(t), \d_t)u - \sum_{i,j \geq 2}\g^{ij}\n^2_{e_i, e_j} u \\
	&= \psi {\n_t}^2u - 2\n_t \n_Zu + 2 (\d_t \psi)\n_tu + 2 \n_{\n_t\grad(t)}u - R^\n(\grad(t), \d_t)u - \sum_{i,j \geq 2}\g^{ij}\n^2_{e_i, e_j} u.
\end{align*}
Since $g(\n_t\grad(t), \d_t) = 0$ and  $g(\n_t\grad(t), \grad(t)) = - \frac12 \d_t \psi$, we have
\[
	\n_t\grad(t) = - \frac12 (\d_t \psi) \d_t + \frac 12 X,
\]
for some smooth vector field $X$ such that $X|_{\H_t} \in T\H_t$ for all $t \in [0, \e)$.
Recalling that $Z \in T\H_t$ for all $t \in [0, \e)$ gives
\begin{align*}
	\n^*\n u &= \psi {\n_t}^2u - 2\n_{Z}\n_tu + (\d_t\psi) \n_tu + R(u),
\end{align*}
where $R$ is a linear differential operator, only differentiating along $\H_t$.
Since $\d_t\psi(0, \cdot) = 2$ and $Z_{t = 0} = - V$, we conclude that
\[
	\n^k_{t} \n^*\n u|_{t = 0} = 2\n_V\n_t^{k+1}u|_{t = 0} + 2(k+1)\n_t^{k+1} u|_{t = 0} + R_k(u|_{t = 0}, \n_t u|_{t = 0}, \hdots, \n_t^k u|_{t = 0}),
\]
for every $k \in \N$, where $R_k$ is a linear differential operator, only differentiating in $\H$-direction.
Since
\begin{align*}
	B(\n u) &= \sum_{\a, \b = 0}^n g^{\a\b} B(g(e_\a, \cdot) \otimes \n_{e_\b}u) \nonumber \\
		&= \psi B(g(\d_t, \cdot) \otimes \n_tu) + B(g(\grad(t), \cdot) \otimes \n_tu) + B(g(\d_t, \cdot) \otimes \n_{\grad(t)}u) \nonumber \\*
	& \quad + \sum_{i,j = 2}^n \g^{ij}B(g(e_i, \cdot) \otimes \n_{e_j}u),
\end{align*}
we conclude by similar arguments that
\[
	\n^k_tB(\n u)|_{t = 0} = - B(g(V, \cdot) \otimes \n^{k+1}_t u)|_{t = 0} + S_k(u|_{t = 0}, \n_t u|_{t = 0}, \hdots, \n^k_t u|_{t = 0}),
\]
where $S_k$ is a linear differential operator, only differentiating in $\H$-direction.
It follows that
\begin{align*}
	\n^k_t(Pu)|_{t = 0} 
		&= \n^k_t(\n^* \n u + B(\n u) + A u)|_{t = 0} \\
		&=2\n_V\n_t^{k+1} u|_{t = 0} + 2(k+1)\n_t^{k+1} u|_{t = 0} \\
		& \quad - B(g(V, \cdot) \otimes \n_t^{k+1}u) + T_k|_{t = 0}(u, \n_t u, \hdots, \n_t^k u)
\end{align*}
where $T_k$ is a differential operator, only differentiating in $\H$-direction.
\end{proof}

\begin{proof}[Proof of Theorem \ref{mainthm2}]
We know that $u|_{t = 0} = 0$ by assumption. 
The goal is to show that 
\[
	\n_t^k u|_{t = 0} = 0
\]
for all $k \in \N$, since Corollary \ref{cor: unique_cont} would then imply the conclusion.

Assume we know that $u|_{t = 0} = \hdots = \n^k_t u|_{t = 0} = 0$. 
Lemma \ref{le: iterative} implies that
\[
	0 = 2\n_V\n_t^{k+1} u|_{t = 0} + 2(k+1)\n_t^{k+1} u|_{t = 0} - B(g(V, \cdot) \otimes \n_t^{k+1}u)|_{t = 0}.
\]
This implies that
\begin{align*}
	\d_V a(\n^{k+1}_t u, \n^{k+1}_t u)|_{t = 0}
		&= (\n_Va)(\n^{k+1}_t u, \n^{k+1}_t u) + 2 a(\n_V\n^{k+1}_t u, \n^{k+1}_t u)|_{t = 0} \\
		&= (\n_Va)(\n^{k+1}_t u, \n^{k+1}_t u) + a(B(g(V, \cdot) \otimes \n^{k+1}_t u, \n^{k+1}_t u)|_{t = 0} \\
		& \quad - 2(k+1) a(\n^{k+1}_t u, \n^{k+1}_t u)|_{t = 0} \\
		& \leq - 2(k+1) a(\n^{k+1}_t u, \n^{k+1}_t u)|_{t = 0},
\end{align*}
by the assumptions on $a$ and $B$.
Since $\H$ is compact, $a(\n^{k+1}_t u, \n^{k+1}_t u)|_{t = 0}$ must attain its maximum. 
In the maximum point, $\d_V a(\n^{k+1}_t u, \n^{k+1}_t u)|_{t = 0} = 0$ and therefore by the previous calculations
\[
	0 \leq -2(k+1) a(\n^{k+1}_t u, \n^{k+1}_t u)|_{t = 0}
\]
in the maximum point.
This implies that $a(\n^{k+1}_t u, \n^{k+1}_t u)|_{t = 0} = 0$ in the maximum point. 
Since $a$ is positive definite, $a(\n^{k+1}_t u, \n^{k+1}_t u)|_{t = 0}$ is non-negative and we conclude that
\[
	a(\n^{k+1}_t u, \n^{k+1}_t u)|_{t = 0} = 0
\]
everywhere.
Again, since $a$ is positive definite, we conclude that $\n^{k+1}_t u|_{t = 0} = 0$. 
By induction, this shows that $\n^k_t u|_{t = 0} = 0$ for all $k \in \N$ and Corollary \ref{cor: unique_cont} implies the statement.
\end{proof}

\section{Proof of existence for admissible linear scalar wave equations} \label{sec: linear_characteristic}

The goal of this section is to prove Theorem \ref{mainthm3}.
The proof is based on the fact that we may compute the asymptotic solution rather explicitly for admissible scalar wave equations and therefore apply Theorem \ref{mainthm1}. 
We start by proving Lemma \ref{le: scalar_admissible}.

\begin{proof}[Proof of Lemma \ref{le: scalar_admissible}]
The trivial connection $\n := \d$ is of course metric with respect to pointwise multiplication.
Therefore condition \eqref{eq: admissible} is equivalent to the condition 
\begin{equation}\label{eq: scalar_admissible}
	g(V, W|_\H) \leq 0.
\end{equation}
There is a smooth function $\beta$ such that $W|_\H - \beta \d_t|_\H \in T\H$.
Since $g(V, \d_t|_\H) = -1$, inequality \eqref{eq: scalar_admissible} becomes
\[
	\beta \geq 0.
\]
This is satisfied if and only if $W|_\H$ is nowhere outward pointing. 
\end{proof}

It is clear that Lemma \ref{le: scalar_admissible} and Theorem \ref{mainthm2} imply the uniqueness part of Theorem \ref{mainthm3}.
The following lemma is crucial for the existence part of Theorem \ref{mainthm3}.

\begin{lemma} \label{le: ODE_Riem}
Let $(K, h)$ be a compact Riemannian manifold (without boundary) and assume that $X$ is a nowhere vanishing Killing vector field with respect to $h$, i.e.
\[
	\L_X h = 0.
\]
Let $\alpha \in C^\infty(K)$ be a smooth nowhere vanishing function.
Then
\begin{align*}
		C^\infty(K) &\to C^\infty(K) \\
		u &\mapsto \partial_X u + \a u
\end{align*}
is an isomorphism of topological vector spaces. 
\end{lemma}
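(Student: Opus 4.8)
The plan is to invert $Lu := \partial_X u + \alpha u$ explicitly by integrating along the flow of $X$; the point is that $L$ is visibly continuous $C^\infty(K)\to C^\infty(K)$ (a first order differential operator with smooth coefficients), so everything reduces to producing a continuous two-sided inverse. Working on one connected component at a time and, if necessary, replacing $(X,\alpha)$ by $(-X,-\alpha)$ — which only changes $L$ by a sign — I may assume $K$ is connected and $\alpha\ge c>0$ for some constant $c$. Let $\phi_s\colon K\to K$ be the flow of $X$, defined for all $s\in\R$ by compactness of $K$. The decisive use of the hypothesis that $X$ is Killing is the following: each $\phi_s$ is an isometry of $(K,h)$, so by Myers--Steenrod the closure of $\{\phi_s:s\in\R\}$ inside the isometry group is a compact Lie group acting smoothly on $K$; hence \emph{all} derivatives in the $K$-directions of the map $(s,p)\mapsto\phi_s(p)$ are bounded uniformly in $s\in\R$.

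Set $A(s,p):=\int_0^s\alpha(\phi_\sigma(p))\,d\sigma$, so that $A(s,\phi_r(p))=A(s+r,p)-A(r,p)$. If $u\in C^\infty(K)$ solves $Lu=f$, then along each integral curve $\frac{d}{ds}\bigl(e^{A(s,p)}u(\phi_s(p))\bigr)=e^{A(s,p)}f(\phi_s(p))$; integrating from $s$ to $0$ and letting $s\to-\infty$, using $A(s,p)\le -c|s|$ for $s\le 0$ together with the boundedness of $u$, forces
\[
	u(p)=\int_{-\infty}^{0}e^{A(\sigma,p)}f(\phi_\sigma(p))\,d\sigma .
\]
This already proves $L$ is injective, and it pins down the only candidate $S$ for the inverse. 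I would then take this integral as the \emph{definition} of $Sf$ and check three things: that it converges, that $Sf$ is smooth with $\norm{Sf}_{C^k}\le C_k\norm{f}_{C^k}$ for each $k$ (continuity of $S$), and that $L(Sf)=f$. For the first two, the bound $e^{A(\sigma,p)}\le e^{-c|\sigma|}$ for $\sigma\le 0$, combined with the uniform bounds on the $K$-derivatives of $\phi_\sigma$ — which make every $p$-derivative of $A(\sigma,\cdot)$ grow at most linearly in $|\sigma|$ and every $p$-derivative of $f\circ\phi_\sigma$ bounded in terms of $\norm{f}_{C^k}$ — shows that every $p$-derivative of the integrand is dominated, uniformly in $p$, by $e^{-c|\sigma|}$ times a polynomial in $|\sigma|$, so differentiation under the integral sign applies. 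For $L\circ S=\mathrm{id}$, I would use the cocycle relation to rewrite $(Sf)(\phi_r(p))=e^{-A(r,p)}\int_{-\infty}^{r}e^{A(\sigma,p)}f(\phi_\sigma(p))\,d\sigma$ and differentiate in $r$ at $r=0$, which yields $(\partial_X Sf)(p)=-\alpha(p)(Sf)(p)+f(p)$. Together with injectivity this exhibits $S$ as a continuous two-sided inverse, so $L$ is an isomorphism of topological vector spaces (no appeal to the open mapping theorem is needed).

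The main obstacle is the uniform-in-$s$ control of the derivatives of $\phi_s$: for a general vector field on a compact manifold these may grow exponentially in $s$, which would destroy the domination needed for smoothness of $Sf$, and it is precisely the Killing property — via Myers--Steenrod — that excludes this. A second, more elementary point needing care is the limit $s\to-\infty$, which uses the sign normalisation $\alpha\ge c>0$ in an essential way. One should not hope to reduce $L$ to $\partial_X$ by a conjugation $u\mapsto\rho u$, since solving $\partial_X\log\rho=-\alpha$ is a small-divisor problem that is generally unsolvable (e.g.\ for irrational flows on a torus); the exponential gain coming from $\alpha$ being bounded away from zero is exactly what makes the direct integral construction work instead.
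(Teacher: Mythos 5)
Your proposal is correct and follows essentially the same route as the paper: the inverse is the same explicit integral along the flow, the identity $\partial_X(Sf)+\alpha\, Sf=f$ is verified by the same differentiation along the flow using the cocycle property, and the smoothness of $Sf$ rests on the same key point that the flow consists of isometries and hence has uniformly bounded spatial derivatives (the paper gets this directly from $\abs{d\phi_s(Y)}_h=\abs{Y}_h$ and $\hat\nabla d\phi_s=0$, you via Myers--Steenrod compactness of the isometry group). The only other deviations are minor: the paper proves injectivity by evaluating $\partial_Xu+\alpha u=0$ at a maximum and minimum of $u$ rather than via the representation formula, and your explicit $C^k$ bounds make continuity of the inverse immediate, whereas the paper leaves that point implicit.
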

\begin{proof}
First we show injectivity. 
Assume therefore that $\d_X u + \a u = 0$. 
Since $K$ is compact, the maximum and minimum values of $u$ are attained at, let us say, $x_{\max}$ and $ x_{\min}$. 
At these points $\partial_Xu(x_{\min}) = 0 = \partial_X u(x_{\max})$. 
Since $\a$ is nowhere vanishing, it follows that $u(x_{\min}) = 0 = u(x_{\max})$. 
Hence $u = 0$, which proves injectivity.

To prove surjectivity, let $f \in C^\infty(K)$ be given. 
We assume that $\a > 0$, the case $\a < 0$ is similar since $\L_{-X}h = - \L_X h = 0$.
For each point $p \in K$, let $\phi_s(p)$ denote the flow of $p$ along $X$, i.e. 
\begin{align*}
	\phi_0(p) &= p, \\
	\der s{s_0} \phi_s(p) &= X|_{\phi_{s_0}(p)}.
\end{align*}
Since $K$ is compact, the flow exists for all $s \in \R$.
We claim that the function $u$ defined for each point $p \in K$ by
\begin{equation} \label{eq: explicit_ODE}
	u(p) := \int_{-\infty}^0e^{- \int_s^0 \alpha \circ \phi_a(p)da} f \circ \phi_s(p)ds
\end{equation}
is smooth and solves $\d_X u + \a u = f$. 
Since $\a$ is bounded from below by a positive constant, $u$ defined by equation \eqref{eq: explicit_ODE} is well-defined.
The key point in showing smoothness of $u$ is that 
\begin{align*}
	(K, h) &\to (K, h) \\
	p &\mapsto \phi_s(p)
\end{align*}
is an \emph{isometry} for all $s \in \R$. 
In other words, we will use that
\begin{equation} \label{eq: h_isometry}
	\abs{d\phi_s(Y)}_{h} = \abs{Y}_h,
\end{equation}
for all $Y \in T\H$ and $s \in \R$.
The idea is to apply the Lebesgue dominated convergence theorem. 
For this, we need to make sure that the partial derivatives of the integrand are integrable.
For each smooth vector field $Y$ and $s \leq 0$, we have
\begin{align*}
	& \abs{\d_Y \left(e^{-\int_s^0 \a \circ \phi_a(p) da}f \circ \phi_s(p)\right)} \\
		&\leq e^{-\int_s^0 \a \circ \phi_a(p) da} \left(\norm{f}_{C^0}\int_s^0 \abs{d\a \circ d\phi_a(Y)}da + \abs{df \circ d\phi_s(Y)}\right) \\*
		& \leq e^{-\int_s^0 \a \circ \phi_a(p) da} \left(\norm{f}_{C^0} \norm{\a}_{C^1} \int_s^0 \abs{d\phi_a(Y)}_h da + \norm{f}_{C^1}\abs{d\phi_s(Y)}_h \right) \\*
		& \leq e^{-\int_s^0 \a \circ \phi_a(p) da} \abs{Y}_h (\norm{f}_{C^0}\abs s \norm{\a}_{C^1} + \norm{f}_{C^1}),
\end{align*}
where the $C^k$-norms are defined with respect to the metric $h$.
Since $\a$ is bounded from below by a positive constant, we conclude that
\[
	\abs{\d_Y \left(e^{\int_0^s \a \circ \phi_a(p) da}f \circ \phi_s(p)\right)}_h \leq (1 + \abs s)Ce^{Cs}\abs{Y}_h,
\]
for some constant $C > 0$, for all $s \leq 0$.
Since the right hand side is in $L^1(- \infty, 0)$, we may apply Lebesgue's dominated convergence theorem to equation \eqref{eq: explicit_ODE} and conclude that $u \in C^1$.
To show that the second derivative of the integrand is integrable, we use the fact that $\hat \n d\phi_s = 0$, where $\hat \n$ is the Levi-Civita connection with respect to $h$, since $\phi_s$ are isometries.
We only compute one relevant term, the other term is treated analogously.
\begin{align*}
	\abs{\Hess(f \circ \phi_s)(Y, Z)}_h 
		&= \abs{(\d_Y\d_Z - \d_{\hat \n_YZ})f \circ \phi_s)}_h \\
		&= \abs{\d_Y df \circ d\phi_s(Z) -df \circ d\phi_s(\hat \n_YZ)}_h \\
	 	&= \abs{\hat \n df(d\phi_s(Y), d\phi_s(Z)) + df \circ \hat \n d\phi_s(Y, Z)}_h \\
	 	&= \abs{\hat \n df(d\phi_s(Y), d\phi_s(Z))}_h \\
	 	&\leq \norm{f}_{C^2} \abs{d\phi_s(Y)}_h \abs{d\phi_s(Z)}_h \\
	 	&= \norm{f}_{C^2} \abs{Y}_h \abs{Z}_h,
\end{align*}
independently of $s$.
Again, by the Lebesgue dominated convergence theorem, we conclude that $u \in C^2$.
Since $f$ and $\a$ and all derivatives are uniformly bounded on $K$, one iterates this to show that $u$ is in fact smooth.
Furthermore, we calculate that
\begin{align*}
	\partial_X u(p) 
		&= \frac{d}{dt}\Big|_{t = 0} \int_{-\infty}^0 e^{-\int_s^0 \a \circ \phi_{a}(\phi_t(p))da} f \circ \phi_s(\phi_t(p)) ds \\*
		&= \frac{d}{dt}\Big|_{t = 0} \int_{-\infty}^0 e^{-\int_s^0 \a \circ \phi_{a+t}(p)da} f \circ \phi_{s+t}(p) ds \\*
		&= \frac{d}{dt}\Big|_{t = 0} e^{- \int_0^t \a \circ \phi_b(p) db}\int_{-\infty}^t e^{-\int_w^0 \a \circ \phi_{b}(p)db} f \circ \phi_w(p) dw \\*
		&= - \a(p) u(p) + f(p).
\end{align*}
Hence $\d_X u + \a u = f$ as claimed. 
\end{proof}


Using this, we now prove Theorem \ref{mainthm3}.

\begin{proof}[Proof of Theorem \ref{mainthm3}]
Uniqueness of solution follows by Lemma \ref{le: scalar_admissible} and Theorem \ref{mainthm2}.

Theorem \ref{mainthm1} implies that it suffices to compute an asymptotic solutions $w^N$ in order to prove existence of a solution.
Let us make the ansatz 
\[
	w^N(t,x) := \sum_{j = 0}^{N+1} \frac{u_j(x)}{j!}t^j,
\]
where $(u_j)_{j=0}^\infty \subset C^\infty(\H)$.
By Lemma \ref{le: iterative}, we know that $(\d_t)^k\left(Pw^N - f\right)|_{t = 0} = 0$ is equivalent to
\[
	0 = 2\d_V u_{k+1} + 2(k+1)u_{k+1} - g(V, W)|_{t = 0}u_{k+1} + T_k|_{t = 0}(u_0, \hdots, u_k) - \d_t^kf|_{t = 0}.
\]
Since $P$ is an admissible wave operator, there is a \emph{non-negative} function $\b \in C^\infty(\H)$ such that $P|_{t = 0} - (\Box + \b \d_t)|_{t = 0}$ is a differential operator of first order, only differentiating in $\H$-direction. 
It follows that $g(W, V)|_{t = 0} = g(\b\d_t, V) = - \b$. 
This implies that
\begin{equation} \label{eq: iterative_asymptotic}
	\d_V u_{k+1} + (k+1 + \frac12 \beta)u_{k+1} = \frac12 T_k|_{t = 0}(u_0, \hdots, u_k) - \frac12 \d_t^kf|_{t = 0}.
\end{equation}

We want to solve \eqref{eq: iterative_asymptotic} iteratively using the previous lemma.
For this, we need to show that $\L_V \sigma|_{t = 0} = 0$, where $\sigma$ was defined in \eqref{eq: sigma}.
Since $\H$ is totally geodesic, it follows that $\L_Vg|_{t = 0}(X, Y) = 0$ for all $X, Y \in T\H$. 
We have
\begin{align*}
	\L_V \sigma|_{t = 0}(X,Y) &= \L_V g|_{t = 0}(X, Y) + \d_V(g(X, \d_t)g(Y, \d_t))|_{t = 0} \\
		&\quad - g([V, X], \d_t)g(Y, \d_t)|_{t = 0} - g(X, \d_t)g([V, Y], \d_t)|_{t = 0} \\
		&= \d_V(g(X, \d_t)g(Y, \d_t))|_{t = 0} - g([V, X], \d_t)g(Y, \d_t)|_{t = 0} - g(X, \d_t)g([V, Y], \d_t)|_{t = 0}.
\end{align*}
Clearly $\L_V \sigma|_{t = 0} (V,V) = 0$ since $g(V, \d_t)|_{t = 0} = -1$. 
Moreover, $\L_V \sigma|_{t = 0}(e_i, e_j) = 0$, since $g(e_i, \d_t)|_{t = 0} = 0$ for all $i$.
In order to show that $\L_Vg|_{t = 0}(V, e_i) = 0$, we need to use that $\Ric(e_i, V)|_{t = 0} = 0$ by assumption.
By Proposition \ref{prop: time_function}, the metric is given by
\[
	g_{\a \b}|_{t = 0} = \begin{pmatrix}
	0 & -1 & 0 \\
	-1 & 0 & 0 \\
	0 & 0 & \delta_{ij}
	\end{pmatrix}
	= g^{\a \b}|_{t = 0},
\]
for $i,j \geq 2$, in the basis $(\d_t, V, e_2, \hdots, e_n)$. By assumption, we have
\begin{align*}
	0 	&= \Ric(e_i, V)|_{t = 0} \\
		&= -R(V, e_i, V, \d_t)|_{t = 0} + \sum_{j = 2}^n R(e_j, e_i, V, e_j)|_{t = 0} \\
		&= g(\n_{e_i} \n_V V, \d_t)|_{t = 0} - g(\n_V \n_{e_i} V, \d_t)|_{t = 0} - g(\n_{[e_i, V]} V, \d_t)|_{t = 0}  \\
		&\quad +  \sum_{j = 2}^n \left( g(\n_{e_j} \n_{e_i} V, e_j)|_{t = 0} - g(\n_{e_i} \n_{e_j} V, e_j)|_{t = 0} - g(\n_{[e_j, e_i]} V, e_j)|_{t = 0} \right) \\
		&= g(\n_{e_i}V, \d_t)|_{t = 0} + g(\n_{\n_V e_i} V, \d_t)|_{t = 0} \\
		&= g(\n_{\n_V e_i}V, \d_t)|_{t = 0},
\end{align*}
where we have used that $\n_V V = V$ and $\n_{e_i}V = 0$ on $\H$ and that $\H$ is totally geodesic. 
Again since $\H$ is totally geodesic, we have $\n_V e_i \in T\H$ and may conclude that
\[
	g(\n_{\n_V e_i}V, X)|_{t = 0} = 0
\]
for all $X \in T \H$. 
Altogether it follows that $\n_{\n_Ve_i}V|_{t = 0} = 0$ and hence $[V, e_i] = \n_Ve_i|_{t = 0} \in E$. 
Now, this means that $g([V, e_i], \d_t)|_{t = 0} = 0$, which implies that also $\L_V \sigma|_{t = 0}(e_i, V) = 0$.

Since $k+1 + \frac{1}{2} \beta > 0$, we may apply Lemma \ref{le: ODE_Riem} with $K = \H$, $X = V$, $h = \sigma$ and $\a = k+1 + \frac{1}{2} \beta$ to iteratively solve equation \eqref{eq: iterative_asymptotic} in a unique way.
Since $u_0$ is given, we obtain in this way the asymptotic solutions $w^N$ for any $N \in \N$.

By Theorem \ref{mainthm1}, we conclude that the continuous map
\begin{align*}
	C^\infty(\H \sqcup D(\S)) &\to C^\infty(\H) \times C^\infty(\H \sqcup D(\S)) \\
		u &\mapsto (u|_\H, Pu)
\end{align*}
bijective. 
The open mapping theorem for Fréchet spaces now implies that the inverse is continuous as well, i.e.\ the solution depends continuously on $u_0$ and $f$.
\end{proof}

\section{Proof of local existence for admissible non-linear scalar wave equations} \label{sec: non-linear_characteristic}

The goal of this section is to prove Theorem \ref{mainthm4}.
We will perform a Picard-type iteration starting with the asymptotic solution.
An argument using the energy estimate and the asymptotic solution implies that there is a limit, which is smooth up to the Cauchy horizon on a small neighbourhood of the Cauchy horizon.

Let $u_0$ and $f$ be given by the assumptions in Theorem \ref{mainthm4}.
Similarly to when we proved existence of solutions for the linear wave equation, we will need an \emph{asymptotic solution} for the non-linear wave equation.

\begin{lemma}[The asymptotic solution] \label{le: non-linear_asymptotic_expansion} 
There are functions $(u_j)_{j \in \N} \subset C^\infty(\H)$ such that for each $N \in \N$, the function $w^N \in C^\infty([0, \e) \times \H)$ defined by
\[
	w^N(x,t) := \sum_{j=0}^{N+1} \frac{u_j(x)t^j}{j!}
\]
satisfies
\begin{align*}
	(\d_t)^k(P w^N - f(w^N))|_{t = 0} &= 0, \\
		w^N|_{t = 0} &= u_0,
\end{align*}
for all $k \leq N$.
\end{lemma}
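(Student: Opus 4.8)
The plan is to repeat, almost verbatim, the iterative construction of the asymptotic solution carried out in the proof of Theorem~\ref{mainthm3}, the only genuinely new ingredient being the nonlinear term $f(w^N)$. Writing the wave operator $P = \Box + \d_W + \a$ in the form $\n^*\n + B(\n) + A$ with $\n = \d$, $B$ the bundle map $T^*M \to \R$ given by $\xi \mapsto \xi(W)$, and $A = \a$, I would invoke Lemma~\ref{le: iterative}. Since the proposed $w^N(x,t) = \sum_{j=0}^{N+1} u_j(x) t^j/j!$ satisfies $\d_t^\ell w^N|_{t=0} = u_\ell$ for $\ell \le N+1$, this gives, for every $k \le N$,
\[
  \d_t^k(Pw^N)|_{t=0} = 2\d_V u_{k+1} + 2(k+1)u_{k+1} - g(V,W)|_{t=0}\,u_{k+1} + T_k(u_0,\dots,u_k),
\]
where $T_k$ is a linear differential operator only differentiating in $\H$-direction. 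As in the proof of Lemma~\ref{le: scalar_admissible}, the hypothesis that $W|_\H$ is nowhere outward pointing means $W|_\H = \beta\,\d_t|_\H + Y$ with $\beta \in C^\infty(\H)$, $\beta \ge 0$, and $Y$ tangent to $\H$; the term $\d_Y$ only differentiates in $\H$-direction, so it may be absorbed into $T_k$, while $g(V,W)|_{t=0} = -\beta$.

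The new point to handle is the behaviour of $\d_t^k(f(w^N))|_{t=0}$. Since $f$ is smooth and $w^N$ is a polynomial in $t$ with coefficients in $C^\infty(\H)$, the chain rule (Fa\`a di Bruno) expresses $\d_t^k(f(w^N))|_{t=0}$ as a universal polynomial in the partial derivatives of $f$ evaluated along $\{t = 0\}$ and in the functions $\d_t^\ell w^N|_{t=0} = u_\ell$ for $\ell \le k$. In particular it is a well-defined element $F_k(u_0,\dots,u_k) \in C^\infty(\H)$ depending neither on $u_{k+1}$ nor on $N$. Consequently, the conditions $w^N|_{t=0} = u_0$ and $(\d_t)^k(Pw^N - f(w^N))|_{t=0} = 0$ for all $k \le N$ are equivalent to the recursion on $\H$
\[
  \d_V u_{k+1} + \bigl(k+1 + \tfrac12\beta\bigr)u_{k+1} = \tfrac12 F_k(u_0,\dots,u_k) - \tfrac12 T_k(u_0,\dots,u_k), \qquad k \in \N,
\]
with $u_0$ the prescribed initial datum.

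I would then solve this recursion order by order. For each $k$ the coefficient $k+1 + \tfrac12\beta$ is a smooth, strictly positive (hence nowhere vanishing) function on $\H$. Moreover, exactly as established in the proof of Theorem~\ref{mainthm3} using the hypothesis $\Ric(V,X) = 0$ for all $X \in T\H$, the vector field $V$ is a Killing field for the Riemannian metric $\sigma$ of \eqref{eq: sigma} on $\H = \H_0$. Hence Lemma~\ref{le: ODE_Riem}, applied with $(K,h) = (\H,\sigma)$, $X = V$ and $\a = k+1+\tfrac12\beta$, yields, given $u_0,\dots,u_k$, a unique $u_{k+1} \in C^\infty(\H)$. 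Starting from $u_0$ this determines the whole sequence $(u_j)_{j\in\N}$, independently of $N$ because the right-hand sides above are; the associated functions $w^N$ then have all the required properties.

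The only step requiring real care is the Fa\`a di Bruno bookkeeping: one must verify that $\d_t^k(f(w^N))|_{t=0}$ is indeed a smooth function on $\H$ depending on $u_0,\dots,u_k$ alone, so that the recursion closes and is independent of $N$. This is immediate from smoothness of $f$ together with $\d_t^\ell w^N|_{t=0} = u_\ell$ for $\ell \le N+1$, and everything else is a direct transcription of the linear argument of Section~\ref{sec: linear_characteristic}.
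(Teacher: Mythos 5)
Your proposal is correct and follows essentially the same route as the paper: apply Lemma \ref{le: iterative} to reduce the conditions to the recursion $\d_V u_{k+1} + (k+1+\tfrac12\beta)u_{k+1} = \text{(terms in }u_0,\dots,u_k)$, observe via the chain rule that $(\d_t)^k f(w^N)|_{t=0}$ depends only on $u_0,\dots,u_k$, and solve order by order with Lemma \ref{le: ODE_Riem} exactly as in the proof of Theorem \ref{mainthm3}. The paper's own proof is just a terser version of this, citing ``the same argument as in the proof of Theorem \ref{mainthm3}'' where you spell out the Killing property of $V$ with respect to $\sigma$ and the Fa\`a di Bruno bookkeeping.
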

\begin{proof}
By Lemma \ref{le: iterative} it follows that 
\[
	(\d_t)^k(P w^N - f(w^N))|_{t = 0} = 0
\]
is equivalent to
\begin{equation} \label{eq: defining_u_n}
	\d_Vu_{k+1} + \left(k+1 + \frac{1}{2} \beta\right)u_{k+1}= Q_k(u_0, \hdots, u_k) + \frac12 (\d_t)^kf(w^N)|_{t = 0},
\end{equation}
for some smooth non-negative function $\b$.
The right hand side only depends on 
\[
	u_0 = w^N|_{t = 0}, \hdots, u_k = (\d_t)^k w^N|_{t = 0}.
\] 
Since $k+1 + \frac12\b > 0$, the same argument as in the proof of Theorem \ref{mainthm3} implies that we may inductively define $u_{k+1}$ as the unique solutions to \eqref{eq: defining_u_n}.
\end{proof}

Recall that the goal is to show that there is a smooth solution $u$, defined on a small future neighbourhood of $\H$ such that
\begin{align*}
	Pu &= f(u), \\
	u|_\H &= u_0.
\end{align*}
We will construct this neighbourhood by showing that the solution $u$ exists on an open set of the form $[0, T) \times \H$ for some $T \in (0, \e)$ yet to be defined.

Let us fix $m \in N$ such that $2m \geq \frac{\dim(\H)}2$ and an $N \in \N$ such that $N > D_m$, where $D_m > 0$ is the constant from Theorem \ref{thm: Energy1}. 
The idea is again to construct a sequence of functions that converge to a solution $u$.
Define the sequence $(v_k)_{k \in \N} \subset C^\infty(\H \sqcup D(\S))$ by first choosing $v_0 := w^N$ and then iteratively solve the characteristic Cauchy problems
\begin{align*}
	P v_{k+1} &= f(v_k), \\
		v_{k+1}|_{\H} &= u_0,
\end{align*}
for each $k \in \N$.
Theorem \ref{mainthm3} implies that these characteristic Cauchy problems can be solved uniquely. 
\begin{lemma} \label{le: asymptotic_behaviour}
For each $k \in \N$ and $j \leq N+1$, we have
\begin{equation} \label{eq: v_k_u_n}
	(\d_t)^j v_k|_{t=0} = u_j.
\end{equation}
\end{lemma}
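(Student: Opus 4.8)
The plan is to prove \eqref{eq: v_k_u_n} by a double induction: an outer induction on $k$ and, for each fixed $k$, an inner induction on $j$. The mechanism is to use Lemma \ref{le: iterative} to turn the equation $Pv_{k+1}=f(v_k)$ into exactly the same transport ODEs along $V$ that were used in Lemma \ref{le: non-linear_asymptotic_expansion} to define the functions $u_j$, and then to invoke the uniqueness part of Lemma \ref{le: ODE_Riem}.

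First I would dispose of the base case $k=0$: since $v_0=w^N=\sum_{j=0}^{N+1}u_jt^j/j!$ by definition, we simply read off $(\d_t)^jv_0|_{t=0}=u_j$ for all $j\leq N+1$. For the outer inductive step I would assume \eqref{eq: v_k_u_n} for $v_k$ and establish it for $v_{k+1}$ by induction on $j$. The inner base case $j=0$ holds because $v_{k+1}|_{\H}=u_0$ by construction. For the inner step, assuming $(\d_t)^iv_{k+1}|_{t=0}=u_i$ for all $i\leq j$ with $j\leq N$, I would apply Lemma \ref{le: iterative} to $v_{k+1}$; combined with the identity $g(V,W)|_{t=0}=-\beta$ from the proof of Theorem \ref{mainthm3}, dividing by $2$ yields a first order linear ODE of the form
\[
	\d_V(\d_t)^{j+1}v_{k+1}|_{t=0}+\Big(j+1+\tfrac12\beta\Big)(\d_t)^{j+1}v_{k+1}|_{t=0}=\tfrac12(\d_t)^jf(v_k)|_{t=0}-\tfrac12T_j\big(v_{k+1}|_{t=0},\dots,(\d_t)^jv_{k+1}|_{t=0}\big).
\]
Since $f$ is smooth and $j\leq N$, the term $(\d_t)^jf(v_k)|_{t=0}$ depends only on $v_k|_{t=0},\dots,(\d_t)^jv_k|_{t=0}$, which equal $u_0,\dots,u_j$ by the outer hypothesis; hence it coincides with $(\d_t)^jf(w^N)|_{t=0}$. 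Likewise the inner hypothesis allows replacing the arguments of $T_j$ by $u_0,\dots,u_j$. Therefore $(\d_t)^{j+1}v_{k+1}|_{t=0}$ satisfies precisely the equation \eqref{eq: defining_u_n} that defines $u_{j+1}$.

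To conclude I would recall that the hypothesis $\Ric(V,X)=0$ for $X\in T\H$ ensures, exactly as shown in the proof of Theorem \ref{mainthm3}, that $V$ is a Killing vector field for the Riemannian metric $\sigma|_{t=0}$ on $\H$, while $j+1+\tfrac12\beta$ is a nowhere vanishing positive function. Lemma \ref{le: ODE_Riem}, applied with $K=\H$, $X=V$, $h=\sigma$ and $\a=j+1+\tfrac12\beta$, then guarantees that \eqref{eq: defining_u_n} has a unique solution, forcing $(\d_t)^{j+1}v_{k+1}|_{t=0}=u_{j+1}$; this closes both inductions. The one delicate point, and the place where genuine care is needed, is to verify that the two transport equations — the one satisfied by $(\d_t)^{j+1}v_{k+1}|_{t=0}$ and the one defining $u_{j+1}$ — have literally the same right hand side. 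This is precisely where the restriction $j\leq N$ enters (so that only the first $N+1$ Taylor coefficients of $v_k$, which agree with those of $w^N$, feed through $f$) together with the fact that $T_j$ is intrinsically determined by $P$ and hence identical in the two derivations.
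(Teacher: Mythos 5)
Your proposal is correct and follows essentially the same route as the paper: a double induction (outer in $k$, inner in $j$), using Lemma \ref{le: iterative} to reduce $(\d_t)^j(Pv_{k+1}-f(v_k))|_{t=0}=0$ to the same transport equation along $V$ that defines $u_{j+1}$ in Lemma \ref{le: non-linear_asymptotic_expansion}, and then invoking the injectivity part of Lemma \ref{le: ODE_Riem} (the paper phrases this by subtracting the two equations and applying Lemma \ref{le: ODE_Riem} to the difference, which is the same argument). Your closing observation that the operator $T_j$ and the term $(\d_t)^jf(v_k)|_{t=0}$ must literally agree with those in \eqref{eq: defining_u_n} is exactly the point the paper relies on via the outer induction hypothesis.
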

\begin{proof}
The case $k = 0$ is clear by construction.
Let us assume that \eqref{eq: v_k_u_n} holds for all $k' \leq k$ for some $k \in \N$ and show \eqref{eq: v_k_u_n} for $k + 1$. 
Similar to the proof of Lemma \ref{le: non-linear_asymptotic_expansion}, we know that $(\d_t)^j (Pv_{k+1} - f(v_k))|_{t = 0}  = 0$ is equivalent to
\begin{align}
	\d_V (\d_t)^{j+1} v_{k+1}|_{t = 0} + \left(j+1 + \frac{1}{2} \beta\right)(\d_t)^{j+1}v_{k+1}|_{t = 0} &= Q(v_{k+1}|_{t = 0}, \hdots, (\d_t)^jv_{k+1}|_{t = 0}) \nonumber \\
		&\quad + \frac12 (\d_t)^jf(v_k)|_{t = 0}. \label{eq: n+1_k+1}
\end{align}
By the induction assumption, we know that $(\d_t)^jf(v_k)|_{t = 0} = (\d_t)^jf(w^N)|_{t = 0}$ for all $j \leq N+1$.
Let us now, for fixed $k+1$, perform induction in $j$.
We know that $(\d_t)^0 v_{k+1}|_{t=0} - u_0 = 0$.
Now assume that we know that $(\d_t)^iv_{k+1}|_{t = 0} = u_i$ for all $i \leq j \leq N$.
By equation \eqref{eq: defining_u_n} in the proof of Lemma \ref{le: non-linear_asymptotic_expansion} and equation \eqref{eq: n+1_k+1}, we deduce that
\[
	\d_V ((\d_t)^{j+1} v_{k+1}|_{t = 0} - u_{j+1}) + \left(j+1 + \frac{1}{2} \beta\right)((\d_t)^{j+1}v_{k+1}|_{t = 0} - u_{j+1}) = 0.
\]
Lemma \ref{le: ODE_Riem} implies that $(\d_t)^{j+1}v_{k+1}|_{t = 0} = u_{j+1}$.
By induction in $j$, we conclude that $(\d_t)^j v_{k+1}|_{t = 0} = u_j$ for all $j \leq N+1$.
This completes the induction step in $k$ and concludes therefore the proof.
\end{proof}

Combining this with Theorem \ref{thm: Energy1}, we are able to deduce the following energy estimates.

\begin{lemma}[Energy estimate for the sequence] \label{le: energy_sequence}
For each $k, l \in \N$ and each $t \in (0, \e)$, we have
\begin{align*}
	\norm{(v_k - v_{l})(t, \cdot)}_{2m+1}	 + \sqrt t \norm{\d_t(v_k - v_{l})(t, \cdot)}_{2m} &\leq D_m t^{D_m} \int_0^t \frac{\norm{P(v_k - v_{l})(s, \cdot)}_{2m}}{s^{D_m + \frac12}} ds.
\end{align*}
\end{lemma}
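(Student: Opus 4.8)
The plan is to apply the energy estimate of Theorem~\ref{thm: Energy1} to $w := v_k - v_l$ on intervals $[t_0, t_1] \subset (0,\e)$ and then let $t_0 \to 0$. For the limit to make sense, two facts are needed: that the ``initial'' energy of $w$ at $t_0$ decays fast enough as $t_0 \to 0$ so that the boundary term in Theorem~\ref{thm: Energy1} disappears in the limit; and that $\norm{Pw(s,\cdot)}_{2m}$ decays fast enough near $s = 0$ so that the integral $\int_0^{t_1} s^{-D_m - 1/2}\norm{Pw(s,\cdot)}_{2m}\,ds$ converges. Both are consequences of the fact, established in Lemma~\ref{le: asymptotic_behaviour}, that all the $v_k$ share the same $\d_t$-Taylor expansion $(u_0,\dots,u_{N+1})$ at $\H$.

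First I would note that, since $(\d_t)^j v_k|_{t=0} = u_j = (\d_t)^j v_l|_{t=0}$ for $j \leq N+1$, the section $w = v_k - v_l \in C^\infty([0,\e)\times\H)$ vanishes together with its first $N+1$ derivatives $\d_t$ at $t=0$. As $\H$ is compact, Taylor's theorem in $t$ gives a constant $C_1 > 0$ (depending on $m, N, k, l$) with
\[
\norm{w(t,\cdot)}_{2m+1} + \sqrt{t}\,\norm{\d_t w(t,\cdot)}_{2m} \leq C_1 t^{N+1} \qquad \text{for all } t \in [0,\e).
\]
Next I would show that $Pw$ vanishes to order $N$ at $t=0$. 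For $k \geq 1$ we have $Pv_k = f(v_{k-1})$, and Lemma~\ref{le: asymptotic_behaviour} tells us that the $\d_t$-jet of $v_{k-1}$ at $t=0$ up to order $N+1$ is $(u_0,\dots,u_{N+1})$, independent of $k$; since $f$ is smooth, the chain rule shows that $(\d_t)^j f(v_{k-1})|_{t=0}$ for $j \leq N$ depends only on this jet and is hence independent of $k$. For $k=0$ we have $Pv_0 = Pw^N$, and Lemma~\ref{le: non-linear_asymptotic_expansion} gives $(\d_t)^j(Pw^N - f(w^N))|_{t=0} = 0$ for $j \leq N$, while $w^N$ also has $\d_t$-jet $(u_0,\dots,u_{N+1})$ at $t=0$. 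Hence $(\d_t)^j Pv_k|_{t=0}$ is independent of $k$ for every $j \leq N$, so $(\d_t)^j Pw|_{t=0} = 0$ for $j \leq N$, and Taylor's theorem yields a constant $C_2 > 0$ with $\norm{Pw(s,\cdot)}_{2m} \leq C_2 s^{N+1}$ for $s \in [0,\e)$. Since $N > D_m$, we get $s^{-D_m - 1/2}\norm{Pw(s,\cdot)}_{2m} \leq C_2 s^{N + 1/2 - D_m}$ with $N + 1/2 - D_m > -1$, so this function is integrable on $(0, t_1)$.

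Finally I would apply Theorem~\ref{thm: Energy1} to $w$ on $[t_0, t_1] \subset (0,\e)$, which gives
\[
\norm{w(t_1,\cdot)}_{2m+1} + \sqrt{t_1}\,\norm{\d_t w(t_1,\cdot)}_{2m} \leq D_m (t_1/t_0)^{D_m}\big(\norm{w(t_0,\cdot)}_{2m+1} + \sqrt{t_0}\,\norm{\d_t w(t_0,\cdot)}_{2m}\big) + D_m t_1^{D_m}\int_{t_0}^{t_1}\frac{\norm{Pw(s,\cdot)}_{2m}}{s^{D_m + 1/2}}\,ds.
\]
By the first estimate the parenthesis on the right is at most $C_1 t_0^{N+1}$, so the first summand is bounded by $C_1 D_m t_1^{D_m} t_0^{N + 1 - D_m} \to 0$ as $t_0 \to 0$, using $N + 1 - D_m > 0$; and by the integrability just shown the integral term tends to $D_m t_1^{D_m}\int_0^{t_1} s^{-D_m - 1/2}\norm{Pw(s,\cdot)}_{2m}\,ds$ as $t_0 \to 0$, by monotone convergence. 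Letting $t_0 \to 0$ with $t_1 = t$ arbitrary in $(0,\e)$ gives the assertion. The only delicate point is the bookkeeping of the order of vanishing of $Pw$ at the Cauchy horizon, which is exactly where Lemma~\ref{le: asymptotic_behaviour} and the choice $N > D_m$ enter; the rest is a direct application of the energy estimate.
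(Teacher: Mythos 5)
Your proposal is correct and follows essentially the same route as the paper: use Lemma \ref{le: asymptotic_behaviour} to see that $v_k - v_l$ vanishes to order $N+1$ in $t$ at the horizon, apply Theorem \ref{thm: Energy1} on $[t_0,t]$, and let $t_0 \to 0$ using $N > D_m$ to kill the boundary term. Your additional argument that $P(v_k - v_l)$ vanishes to order $N$ (giving convergence of the integral) is not needed for the stated inequality, since the integrand is nonnegative and monotone convergence handles the limit $t_0 \to 0$ regardless, but it is correct and harmless.
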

\begin{proof}
Lemma \ref{le: asymptotic_behaviour} implies that
\begin{align*}
	\left( \frac{d}{dt}\right)^j \norm{v_k - v_l}_{2m+1}^2|_{t=0} = 0, \\
	\left( \frac{d}{dt}\right)^{j-1} \norm{\d_t (v_k - v_l)}_{2m}^2|_{t=0} = 0, 
\end{align*}
for all $j \leq 2N+3$. It follows that there are constants $C_{k,l} > 0$ such that
\[
	\norm{(v_k - v_l)(t, \cdot)}_{2m+1} + \sqrt t \norm{\d_t (v_k - v_l)(t, \cdot)}_{2m} \leq C_{k,l}t^{N + 2}.
\]
Applying Theorem \ref{thm: Energy1} implies that for any $t_0 < t$, we have
\begin{align*}
	&\norm{(v_k - v_l)(t, \cdot)}_{2m+1} + \sqrt t \norm{\d_t(v_k - v_l)(t, \cdot)}_{2m} \\*
	&	\quad \leq D_m \left(\frac{{t}}{{t_0}}\right)^{D_m}\left(\norm{(v_k - v_l)(t_0, \cdot)}_{2m+1} + \sqrt {t_0} \norm{\d_t(v_k - v_l)(t_0, \cdot)}_{2m}\right) \\
	& \quad \quad + D_m{t}^{D_m}\int_{t_0}^{t}\frac{\norm{P(v_k - v_l)(s, \cdot)}_{2m}}{s^{D_m + 1/2}}ds \\
	& \quad \leq D_mC_{k,l}t^{D_m} t_0^{N + 2 - D_m} + D_m{t}^{D_m}\int_{t_0}^{t}\frac{\norm{P (v_k - v_l)(s, \cdot)}_{2m}}{s^{D_m + \frac12}}ds.
\end{align*}
Since $N > D_m$, we can let $t_0 \to 0$ and conclude the statement.
\end{proof}

Let us compute bounds on the non-linearity $f$.

\begin{lemma} \label{le: non-linearity_estimate}
Let $[t_0, t_1] \subset [0, \e)$. 
For each constant $B_1 > 0$, there is a constant $B_2 > 0$ (depending on $m$) such that if $v \in C^\infty([t_0, t_1] \times \H)$ satisfies $\norm{v(t, \cdot)}_{2m} \leq B_1$ for all $t\in [t_0, t_1]$, then $\norm{f(v)(t, \cdot)}_{2m} \leq B_2$ for all $t \in [t_0, t_1]$.
\end{lemma}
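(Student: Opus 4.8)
The plan is to combine the Sobolev embedding theorem with the standard Moser-type estimates for nonlinear compositions on the compact manifold $\H$. First I would use that $m$ was chosen large enough that $H^{2m}(\H)$ embeds continuously into $C^0(\H)$: by the Sobolev embedding theorem there is a constant $C_{\mathrm{Sob}}>0$, depending only on $m$ and the fixed Riemannian metric $\sigma$ on $\H$, such that $\norm{w}_{C^0(\H)}\leq C_{\mathrm{Sob}}\norm{w}_{2m}$ for all $w\in H^{2m}(\H)$. Applied to $w=v(t,\cdot)$, the hypothesis $\norm{v(t,\cdot)}_{2m}\leq B_1$ yields $\abs{v(t,x)}\leq R:=C_{\mathrm{Sob}}B_1$ for all $(t,x)\in[t_0,t_1]\times\H$. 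Next I would regard $f$ as a smooth function of the spacetime point and of the real variable and restrict attention to the compact set $[0,t_1]\times\H\times[-R,R]$; here $[0,t_1]\times\H$ is a compact subset of $\H\sqcup D(\S)$ in the null-time coordinates of Proposition~\ref{prop: time_function}, so $f$ together with all of its partial derivatives up to order $2m$ is bounded on this set by a constant $C_f$ depending only on $B_1$, $m$ and $t_1$ (and on $f$ and the fixed geometry).

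Then I would differentiate $f(v)(t,\cdot)$ on $\H$ up to order $2m$ (equivalently, apply $(1+\Delta)^m$ and expand, using that $\norm{\cdot}_{2m}$ is equivalent to the sum of $L^2(\H)$-norms of $\sigma$-covariant derivatives up to order $2m$). By the chain and product rules each resulting expression is a finite sum of terms, each consisting of a derivative of $f$ evaluated at $(t,x,v(t,x))$ — hence bounded in $C^0(\H)$ by $C_f$ — multiplied by a product of at most $2m$ covariant derivatives of $v(t,\cdot)$ whose orders sum to at most $2m$. The Gagliardo--Nirenberg interpolation inequalities on $(\H,\sigma)$ bound the $L^2(\H)$-norm of such a product with $j$ factors by $C\,\norm{v(t,\cdot)}_{C^0}^{\,j-1}\norm{v(t,\cdot)}_{2m}$; equivalently one invokes directly the tame Moser estimate $\norm{f(v)(t,\cdot)}_{2m}\leq\Phi\bigl(\norm{v(t,\cdot)}_{C^0}\bigr)\bigl(1+\norm{v(t,\cdot)}_{2m}\bigr)$ for a continuous function $\Phi$ determined by the sizes of the derivatives of $f$ on the compact set above. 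Combining this with the first two steps gives $\norm{f(v)(t,\cdot)}_{2m}\leq B_2$ for all $t\in[t_0,t_1]$, with $B_2$ depending only on $B_1$, $m$ and $t_1$, which is the assertion.

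The only delicate point is \emph{bookkeeping} rather than a genuine obstacle: one must make sure that the constant coming out of the composition estimate depends only on finitely many sup-norms of derivatives of $f$ on the relevant compact set — hence only on $B_1$, $m$ and $t_1$ — and not on higher Sobolev norms of $v$. This is precisely the tame structure of the Moser inequality, and it is the $C^0$-bound on $v$ extracted in the first step (valid thanks to the choice of $m$) that makes it applicable.
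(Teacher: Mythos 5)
Your proposal is correct and follows essentially the same route as the paper: Sobolev embedding (using $2m>\tfrac{\dim \H}{2}$) to get a uniform $C^0$-bound $C_{\mathrm{Sob}}B_1$ on $v$, compactness of the set where $f$ and its derivatives up to order $2m$ are evaluated to bound them uniformly, and then a Moser-type composition estimate to control $\norm{f(v)(t,\cdot)}_{2m}$ by these data. The paper simply uses the cruder bound $\norm{f(v)(t,\cdot)}_{2m}\leq C\sum_{j=0}^{2m}\sup_{\abs{x}\leq C_{\mathrm{Sob}}B_1}\norm{f^{(j)}(x)(t,\cdot)}_\infty\norm{v(t,\cdot)}_{2m}^j$ in place of your tame $\bigl(1+\norm{v(t,\cdot)}_{2m}\bigr)$-version, which makes no difference here since only boundedness is needed.
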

\begin{proof}
Since $2m > \frac{\dim(\H)}{2}$, the Sobolev embedding theorem implies that there is a constant $C_{Sob} > 0$ such that
\begin{align*}
	\norm{v(t, \cdot)}_{\infty} &\leq C_{Sob}\norm{v(t, \cdot)}_{2m} \\
		&\leq C_{Sob} B_1
\end{align*}
for all $t \in [0,\epsilon)$.
Using this, we get the estimate
\begin{align*}
	\norm{f(v)(t,\cdot)}_{2m} & \leq C \sum_{j = 0}^{2m} \sup_{\abs{x} \leq C_{Sob}B_1}\norm{f^{(j)}(x)(t, \cdot)}_{\infty} \norm{v(t, \cdot)}_{2m}^j \nonumber \\
		& \leq  2m C \sup_{\abs{x} \leq C_{Sob}B_1}\norm{f(x)(t, \cdot)}_{C^{2m}}\max({B_1}^{2m}, 1),
\end{align*}
for some constant $C > 0$. 
Since $[-C_{Sob}B_1, C_{Sob}B_1] \times [t_0, t_1] \times \H$ is compact, we can define
\[
	B_2 := 2m C \sup_{\abs{x} \leq C_{Sob}B_1}\norm{f(x)(t, \cdot)}_{C^{2m}}\max({B_1}^{2m}, 1) < \infty,
\]
which concludes the proof.
\end{proof}

\begin{lemma}[Boundedness of the sequence] \label{le: bounded_sequence}
There is a $T \in (0, \e)$ and a constant $C > 0$ (depending on $m$ and $N$) such that
\[
	\norm{(v_k - w^N)(t, \cdot)}_{2m+1} + \sqrt t \norm{\d_t(v_k - w^N)(t, \cdot)}_{2m} \leq C t^{N + \frac32} 
\]
for all $k \in \N$.
In particular $v_k|_{[0,T] \times \H}$ is bounded in $C^0([0, T], H^{2m+1}(\H))$.
\end{lemma}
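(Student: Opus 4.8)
The plan is to prove the displayed estimate by induction on $k$, combining the energy estimate Theorem~\ref{thm: Energy1} with the matching of Taylor coefficients from Lemma~\ref{le: asymptotic_behaviour} and a Lipschitz-type bound for the nonlinearity in $H^{2m}(\H)$.

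First I would record two preliminary estimates. Setting $r^N := P w^N - f(w^N)$, Lemma~\ref{le: non-linear_asymptotic_expansion} gives $(\d_t)^j r^N|_{t=0} = 0$ for $j \le N$, so, exactly as for \eqref{eq: asympt}, there is a constant $C' > 0$ with $\norm{r^N(t,\cdot)}_{2m} \le C' t^{N+1}$ on $[0,\e)$. Next, since $2m > \dim(\H)/2$ the space $H^{2m}(\H)$ is a Banach algebra, and writing $f(v) - f(w^N) = (v - w^N)\int_0^1 f'(w^N + \theta(v - w^N))\,d\theta$ and repeating the Moser-type argument of Lemma~\ref{le: non-linearity_estimate} produces a constant $L > 0$, depending only on $m$, $f$ and the fixed function $w^N$, such that $\norm{(f(v) - f(w^N))(t,\cdot)}_{2m} \le L\,\norm{(v - w^N)(t,\cdot)}_{2m}$ for all $t \in [0,\e/2]$ and all smooth $v$ with $\norm{(v - w^N)(t,\cdot)}_{2m} \le 1$. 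The important point is that $L$ is fixed \emph{before} the constants $C$ and $T$.

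Then I would run the induction. The case $k = 0$ is immediate since $v_0 = w^N$. Assume the estimate for $v_k$ on $[0,T]$, with $C,T$ still to be fixed subject to $C T^{N + 3/2} \le 1$; then $\norm{(v_k - w^N)(t,\cdot)}_{2m} \le 1$ for $t \le T$, so the Lipschitz bound applies to $v_k$. From $P v_{k+1} = f(v_k)$ we get $P(v_{k+1} - w^N) = (f(v_k) - f(w^N)) - r^N$, hence $\norm{P(v_{k+1} - w^N)(s,\cdot)}_{2m} \le L C s^{N + 3/2} + C' s^{N+1}$. By Lemma~\ref{le: asymptotic_behaviour} the difference $v_{k+1} - w^N$ vanishes to order $N+1$ at $t = 0$, so, letting $t_0 \to 0$ in Theorem~\ref{thm: Energy1} exactly as in the proof of Lemma~\ref{le: energy_sequence} (legitimate because $N > D_m$), I obtain
\[
	\norm{(v_{k+1} - w^N)(t,\cdot)}_{2m+1} + \sqrt t\,\norm{\d_t(v_{k+1} - w^N)(t,\cdot)}_{2m} \le D_m t^{D_m}\int_0^t \frac{L C s^{N+3/2} + C' s^{N+1}}{s^{D_m + 1/2}}\,ds
\]
for $t \in (0,T]$. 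Evaluating the two elementary integrals (both integrands are powers of $s$ with exponent $> -1$) and using $t^{N+2} \le T^{1/2} t^{N+3/2}$, the right-hand side is at most $\big( \frac{D_m L C}{N + 2 - D_m} T^{1/2} + \frac{D_m C'}{N + 3/2 - D_m} \big) t^{N+3/2}$. Now I would choose $C := \frac{2 D_m C'}{N + 3/2 - D_m}$ and then $T \le \e/2$ so small that $\frac{D_m L}{N + 2 - D_m} T^{1/2} \le \frac12$ and $C T^{N+3/2} \le 1$; the coefficient is then $\le C$, closing the induction. The ``in particular'' clause follows from $\norm{v_k(t,\cdot)}_{2m+1} \le \norm{w^N(t,\cdot)}_{2m+1} + C t^{N+3/2}$ and compactness of $[0,T]$.

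I expect the main obstacle to be the tame (Moser-type) difference estimate for $f(v_k) - f(w^N)$ in $H^{2m}(\H)$ together with the bookkeeping that keeps the constant $L$ independent of $C$ and $T$, so that the constants can be chosen without circularity; the energy-estimate step itself is a verbatim repetition of the argument already used for Lemma~\ref{le: energy_sequence}.
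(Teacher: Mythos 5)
Your proposal is correct and takes essentially the same route as the paper: induction on $k$, the splitting $P(v_{k+1}-w^N)=(f(v_k)-f(w^N))+(f(w^N)-Pw^N)$, the $t^{N+1}$ bound on $f(w^N)-Pw^N$ from Lemma \ref{le: non-linear_asymptotic_expansion}, a Moser/Lipschitz bound for the nonlinearity via Lemma \ref{le: non-linearity_estimate}, and the energy estimate with $t_0\to 0$ (justified by Lemma \ref{le: asymptotic_behaviour}, i.e.\ exactly Lemma \ref{le: energy_sequence} with $l=0$), closing the induction by choosing $C$ and then $T$. The only difference is bookkeeping: the paper fixes $C>C_2$ first and allows the bound on $f'$ to depend on $C$ before shrinking $T$, whereas you enforce $CT^{N+3/2}\le 1$ so the Lipschitz constant is fixed beforehand; both arrangements avoid circularity.
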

The constants $T$ and $C$ depend on $m$ and $N$.
\begin{proof}
For $k = 0$, the statement is trivially true.
Choosing $l = 0$ in Lemma \ref{le: energy_sequence}, we get the estimate
\begin{align}
	& \norm{(v_{k+1} - w^N)(t, \cdot)}_{2m+1} + \sqrt t \norm{\d_t(v_{k+1} - w^N)(t, \cdot)}_{2m} \nonumber \\*
		& \quad \leq D_m t^{D_m} \int_0^t \frac{\norm{P(v_{k+1} - w^N)(s, \cdot)}_{2m}}{s^{D_m + \frac12}} ds \nonumber \\*
		& \quad \leq D_m t^{D_m} \int_0^t \frac{\norm{(f(v_k) - f(w^N))(s, \cdot)}_{2m}}{s^{D_m + \frac12}} ds + D_m t^{D_m} \int_0^t \frac{\norm{(f(w^N) - Pw^N)(s, \cdot)}_{2m}}{s^{D_m + \frac12}} ds, \label{eq: bounded_seqeunce}
\end{align}
for each $t \in (0, \e)$ and each $k \in \N$. 
We first estimate the second term.
Lemma \ref{le: non-linear_asymptotic_expansion} implies that $\norm{(f(w^N) - Pw^N)(t, \cdot)}_{2m} \leq C_1t^{N+1}$ for some constant $C_1 > 0$. 
Since $N > D_m$, the integral in the second term in equation \eqref{eq: bounded_seqeunce} is bounded.
We calculate
\begin{align}
	D_m t^{D_m} \int_0^t \frac{\norm{(f(w^N) - Pw^N)(s, \cdot)}_{2m}}{s^{D_m + \frac12}} ds
		& \leq C_1 D_m t^{D_m} \int_0^t \frac{s^{N+1}}{s^{D_m + \frac12}} ds \nonumber \\*
		& = C_2 t^{N + \frac32} \label{eq: k=0_estimate}
\end{align}
for some constant $C_2 > 0$. 
It remains to estimate the first term in equation \eqref{eq: bounded_seqeunce}.
For the induction step, assume that for a $k \geq 0$, we have the estimate
\[
	\norm{(v_k - w^N)(t, \cdot)}_{2m+1} + \sqrt t \norm{\d_t(v_k - w^N)(t, \cdot)}_{2m} \leq Ct^{N + \frac32},
\]
for some fixed constant $C > C_2$, for all $t \in [0, T]$ and for some $T > 0$ yet to be chosen.
By equation \eqref{eq: k=0_estimate}, this holds for $k = 0$.
Since $2m > \frac{\dim(\H)}{2}$, there is a constant $C_3 > 0$ such that
\begin{align}
	& \norm{(f(v_k) - f(w^N))(t, \cdot)}_{2m} \nonumber \\
		&\qquad \leq \norm{(v_k - w^N)(t, \cdot) \int_0^1 f'(\tau v_k(t, \cdot) + (1-\tau)w^N(t, \cdot) )d\tau}_{2m} \nonumber \\
		&\qquad \leq C_3 \norm{(v_k - w^N)(t, \cdot)}_{2m} \int_0^1 \norm{f'(\tau v_k(t, \cdot) + (1-\tau)w^N(t, \cdot) )d\tau}_{2m}. \label{eq: f_estimate} \nonumber
\end{align}
Note that 
\begin{align*}
	\norm{\tau v_k(t, \cdot) + (1-\tau)w^N(t, \cdot)}_{2m} &\leq \tau \norm{v_k(t, \cdot) - w^N(t, \cdot)}_{2m} + \norm{w^N(t, \cdot)}_{2m}\\
		&\leq C \e^{N + \frac32} + \sup_{t \in [0, \e)}\norm{w^N(t, \cdot)}_{2m} \\
		&=: B_1
\end{align*}
which is a bound that is independent of $k$.
By Lemma \ref{le: non-linearity_estimate}, with $f$ replaced by $f'$, there is a constant $B_2 > 0$ such that 
\[
	\norm{f'(\tau v_k(t, \cdot) + (1-\tau)w^N(t, \cdot) )d\tau}_{2m} \leq B_2.
\]
We can now estimate the first term in equation \eqref{eq: bounded_seqeunce} as
\begin{align*}
	D_m t^{D_m} \int_0^t \frac{\norm{(f(v_k) - f(w^N))(s, \cdot)}_{2m}}{s^{D_m + \frac12}} ds 
		&\leq D_mt^{D_m}C_3 B_2\int_0^t \frac{\norm{(v_k - w^N)(s, \cdot)}_{2m}}{s^{D_m + \frac12}}ds \\
		&\leq  D_mt^{D_m}C_3 B_2 C \int_0^t \frac{s^{N + \frac32}}{s^{D_m + \frac12}}ds \\
		&\leq  \frac{D_mC_3 B_2 C}{N + 2 - D_m}t^{N + 2},
\end{align*}
where we have used that $N > D_m$.

Altogether, inequality \eqref{eq: bounded_seqeunce} becomes
\begin{align*}
	\norm{(v_{k+1} - w^N)(t, \cdot)}_{2m+1}  &+ \sqrt t \norm{\d_t(v_k - w^N)(t, \cdot)}_{2m} \\
		&\leq  t^{N + \frac32} \left( C_2 + \frac{D_mC_3 B_2 C}{N + 2 - D_m} \sqrt t \right).
\end{align*}
Since $C > C_2$, there is a $T > 0$ such that
\[
	C_2 + \frac{D_mC_3 B_2 C}{N + 2 - D_m} \sqrt T \leq C.
\]
This implies that
\[
	\norm{(v_{k+1} - w^N)(t, \cdot)}_{2m+1} \leq Ct^{N + \frac32} 
\]
for all $t \in [0, T]$. 
Since $T$ is independent of $k$, this concludes the assertion by induction.
\end{proof}

Let from now on $T$ be as in Lemma \ref{le: bounded_sequence}.

\begin{lemma}[Local existence] \label{le: convergence_sequence}
There is a 
\[
	u \in C^0([0, T], H^{2m+1}(\H)) \cap C^1([0, T], H^{2m}(\H))
\]
such that
\[
	v_k|_{[0, T] \times \H} \to u
\]
in $C^0([0, T], H^{2m+1}(\H)) \cap C^1([0, T], H^{2m}(\H))$. In particular,
\begin{align*}
	Pu &= f(u), \\
	u|_{t = 0} &= u_0.
\end{align*}
\end{lemma}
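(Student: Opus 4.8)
The plan is to show that the sequence $(v_k)$ is Cauchy in the Banach space $C^0([0,T], H^{2m+1}(\H)) \cap C^1([0,T], H^{2m}(\H))$, so that it converges to a limit $u$ with the claimed regularity, and then to pass to the limit in the equation $Pv_{k+1} = f(v_k)$ to obtain $Pu = f(u)$ and $u|_{t=0} = u_0$. The crucial input is the combination of the energy estimate for the sequence (Lemma \ref{le: energy_sequence}), the uniform boundedness just established (Lemma \ref{le: bounded_sequence}), and the Lipschitz-type estimate on the non-linearity (Lemma \ref{le: non-linearity_estimate} applied to $f'$).

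First I would estimate the difference of two consecutive iterates. By Lemma \ref{le: energy_sequence} applied to $v_{k+1} - v_k$, together with $P(v_{k+1}-v_k) = f(v_k) - f(v_{k-1})$, we get
\begin{align*}
	&\norm{(v_{k+1}-v_k)(t,\cdot)}_{2m+1} + \sqrt t \norm{\d_t(v_{k+1}-v_k)(t,\cdot)}_{2m} \\
	&\qquad \leq D_m t^{D_m} \int_0^t \frac{\norm{(f(v_k) - f(v_{k-1}))(s,\cdot)}_{2m}}{s^{D_m + \frac12}}\, ds.
\end{align*}
Writing $f(v_k) - f(v_{k-1}) = (v_k - v_{k-1})\int_0^1 f'(\tau v_k + (1-\tau)v_{k-1})\, d\tau$ and using that all iterates $v_k|_{[0,T]\times\H}$ stay in a fixed bounded set of $H^{2m+1}(\H)$ (hence of $H^{2m}(\H)$) by Lemma \ref{le: bounded_sequence}, the Sobolev embedding $H^{2m}(\H) \hookrightarrow C^0(\H)$ (valid since $2m > \dim(\H)/2$), the fact that $H^{2m}$ is a Banach algebra, and Lemma \ref{le: non-linearity_estimate} with $f$ replaced by $f'$, there is a constant $B_2 > 0$, independent of $k$, with $\norm{(f(v_k) - f(v_{k-1}))(s,\cdot)}_{2m} \leq C_3 B_2 \norm{(v_k - v_{k-1})(s,\cdot)}_{2m}$. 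Inserting this, and using the a priori bound $\norm{(v_j - w^N)(t,\cdot)}_{2m+1} \leq Ct^{N+\frac32}$ from Lemma \ref{le: bounded_sequence} (so in particular $\norm{(v_k - v_{k-1})(s,\cdot)}_{2m} \leq 2Cs^{N+\frac32}$ near $t=0$, and the integral converges since $N > D_m$), one obtains a contraction-type recursion: after possibly shrinking $T$, there is $\theta < 1$ with
\[
	\sup_{t \in [0,T]}\left(\norm{(v_{k+1}-v_k)(t,\cdot)}_{2m+1} + \sqrt t\norm{\d_t(v_{k+1}-v_k)(t,\cdot)}_{2m}\right) \leq \theta\, \sup_{t \in [0,T]}\left(\norm{(v_k - v_{k-1})(t,\cdot)}_{2m+1} + \sqrt t \norm{\d_t(v_k - v_{k-1})(t,\cdot)}_{2m}\right).
\]
Summing the geometric series shows $(v_k)$ is Cauchy in $C^0([0,T],H^{2m+1}(\H)) \cap C^1([0,T],H^{2m}(\H))$, hence converges to some $u$ in that space.

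Finally I would pass to the limit: $v_{k+1}|_{t=0} = u_0$ for all $k$ gives $u|_{t=0} = u_0$; the continuity of $P$ from $C^1([0,T],H^{2m}(\H))$ (together with the second-order spatial part mapping into $C^0([0,T],H^{2m-1}(\H))$) and of $v \mapsto f(v)$ on the relevant Sobolev spaces give $Pu = f(u)$ as an identity in the appropriate distributional/Sobolev sense, valid at least for $t > 0$ and, by the uniform decay estimates, up to $t=0$. The main obstacle I anticipate is the bookkeeping needed to make the contraction estimate genuinely self-improving near $t=0$: one must carefully track the powers of $t$ coming from the asymptotic agreement of all $v_k$ with $w^N$ to order $N+1$ (Lemma \ref{le: asymptotic_behaviour}), ensure the weighted integrals $\int_0^t s^{N+\frac32 - D_m - \frac12}\, ds$ actually converge (which is exactly where $N > D_m$ is used), and extract a factor of $\sqrt T$ (or a comparable small quantity) so that after shrinking $T$ the constant $\theta$ drops below $1$ uniformly in $k$ — all while keeping the bound on $f'$ uniform, which is why Lemma \ref{le: bounded_sequence} had to be proven first with a constant $C$ independent of $k$.
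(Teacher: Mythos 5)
Your setup coincides with the paper's: the same energy estimate for differences (Lemma \ref{le: energy_sequence}), the same uniform bound (Lemma \ref{le: bounded_sequence}), and the same Lipschitz estimate $\norm{f(v_k)-f(v_{k-1})}_{2m}\leq C\norm{v_k-v_{k-1}}_{2m}$ via Lemma \ref{le: non-linearity_estimate} applied to $f'$; the limit-passing at the end is also identical. The divergence, and the gap, is in how you conclude that the sequence is Cauchy. You claim a contraction $\sup_{t\le T}A_k(t)\leq\theta\,\sup_{t\le T}A_{k-1}(t)$ in the \emph{unweighted} norm $A_k(t)=\norm{(v_{k+1}-v_k)(t,\cdot)}_{2m+1}+\sqrt t\,\norm{\d_t(v_{k+1}-v_k)(t,\cdot)}_{2m}$, after shrinking $T$. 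But the recursion you have is $A_k(t)\leq C_3\,t^{D_m}\int_0^t A_{k-1}(s)\,s^{-D_m-\frac12}\,ds$, and there is no reason to expect $D_m<\frac12$ (it comes from a Gronwall argument with a $C/t$ coefficient, and the whole scheme assumes $N>D_m$ precisely because $D_m$ may be large). Hence you cannot bound the integrand by $\sup A_{k-1}$, since $\int_0^t s^{-D_m-\frac12}ds$ diverges; and the bound $A_{k-1}(s)\leq 2Cs^{N+\frac32}$ that you invoke to make the integral converge has a $k$-independent constant, so it produces a $k$-independent bound, not a factor of $\sup A_{k-1}$. As written, the contraction step does not follow from the displayed ingredients.

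The step is fixable, but it needs an extra idea you did not state: measure the differences in a \emph{weighted} norm, e.g.\ $\tilde M_k:=\sup_{t\leq T}t^{-(N+1)}A_k(t)$, which is finite for each $k$ because all iterates agree with $w^N$ up to order $N+1$ at $t=0$ (Lemma \ref{le: asymptotic_behaviour} plus the energy estimate). Then the recursion gives $\tilde M_k\leq \frac{C_3\sqrt T}{N+\frac32-D_m}\tilde M_{k-1}$, an honest contraction after shrinking $T$ — at the price that you then prove the lemma only on a smaller interval than the $T$ fixed in Lemma \ref{le: bounded_sequence} (harmless for Theorem \ref{mainthm4}, but not literally the statement). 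The paper instead iterates the recursion $k$ times: the factors $t^{D_m}$ and $s^{-D_m}$ cancel telescopically, leaving nested integrals of $s^{-1/2}$ and the bound $A_k(t)\leq C_6\,(2\sqrt T C_3)^k/k!$, whose factorial decay gives Cauchy-ness on all of $[0,T]$ with no shrinking and no weighted norm. So either repair works, but your proposal as stated is missing the ingredient that makes the key estimate true.
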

\begin{proof}
We need to show that $v_k|_{[0, T] \times \H}$ is a Cauchy sequence. 
By Lemma \ref{le: bounded_sequence} we know that $\sup_{t \in [0, T]}\left(\norm{\tau v_k(t, \cdot) + (1-\tau)v_{k-1}(t, \cdot)}_{2m}\right)$ is uniformly bounded in $k \in \N$ and $\tau \in [0, 1]$. 
Using this, Lemma \ref{le: non-linearity_estimate} and that $2m > \frac{\dim(\H)}{2}$, we conclude that
\begin{align*}
	\norm{f(v_k) - f(v_{k-1})}_{2m} &= \norm{(v_k - v_{k-1}) \int_0^1 f'(\tau v_k +(1-\tau) v_{k-1}) d\tau}_{2m} \nonumber \\
		& \leq C_1 \norm{(v_k- v_{k-1})}_{2m} \int_0^1 \norm{f'(\tau v_k +(1-\tau) v_{k-1})}_{2m}d\tau \\
		& \leq C_2 \norm{(v_k - v_{k-1})(t, \cdot)}_{2m}, 
\end{align*}
for some constant $C_2 > 0$ independent of $k$. 
Define 
\[
	A_k(t) := \norm{(v_{k+1} - v_{k})(t, \cdot)}_{2m+1}	 + \sqrt t \norm{\d_t(v_{k+1} - v_{k})(t, \cdot)}_{2m}.
\]
Lemma \ref{le: energy_sequence} implies now the recursive relation
\begin{align*}
	A_k(t) & \leq D_m t^{D_m} \int_0^t \frac{\norm{(f(v_{k}) - f(v_{k-1}))(s, \cdot)}_{2m}}{s^{D_m + \frac12}} ds \\
		&\leq C_3 t^{D_m} \int_0^t \frac{A_{k-1}(s)}{s^{D_m + \frac12}} ds,
\end{align*}
where $C_3$ depends on $m$, but not on $k$.
We get by iteration
\[
	A_k(t) \leq t^{D_m} (C_3)^k \int_0^t \frac{1}{\sqrt{s_{k-1}}} \hdots \int_{0}^{s_2} \frac{1}{\sqrt {s_{1}}} \int_0^{s_1} \frac{A_0(s_0)}{(s_0)^{D_m + \frac12}}ds_0 \hdots ds_{k-1}.
\]
for all $t \in [0, T]$. 
By Lemma \ref{le: bounded_sequence} 
\[
	A_0(t) \leq C_4t^{N + \frac32}
\]
for some constant $C_4 > 0$.
Since $N > D_m$, we may estimate 
\[
	\frac{A_0(t)}{t^{D_m}} \leq C_5
\]
for all $t \in [0, T]$.
This simplifies the computation of the integral to
\begin{align*}
	A_k(t) & \leq (C_3)^k C_4C_5 t^{D_m} \int_0^t \frac{1}{\sqrt{s_{k-1}}} \hdots \int_{0}^{s_2} \frac{1}{\sqrt {s_{1}}} \int_0^{s_1} \frac{1}{\sqrt{s_0}}ds_0 \hdots ds_{k-1} \\
			&\leq C_6 \frac{(2\sqrt T C_3)^k}{k!},
\end{align*}
for all $t \in [0, T]$, for some constant $C_6 > 0$.
We conclude the estimate
\begin{align}
	\sup_{t \in [0,T]} \left(\norm{(v_{k+j} - v_k)(t, \cdot)}_{2m+1} + \sqrt t \norm{\d_t(v_{k+j} - v_k)(t, \cdot)}_{2m} \right) &\leq \sum_{i = 0}^{j-1} \sup_{t \in [0, T]}A_{k+i}(t) \nonumber \\
	& \leq C_6 \sum_{i = 0}^{j-1}\frac{(2\sqrt T C_3)^{k+i}}{(k+i)!}.
\end{align}
This implies that $v_k|_{[0, T] \times \H}$ is a Cauchy sequence in the Banach space 
\[
	C^0([0, T], H^{2m+1}(\H)) \cap C^1([0, T], H^{2m}(\H)).
\]
We define $u$ to be its limit.
Since $2m > \frac{\dim(\H)}{2}$, it follows from the Sobolev embedding theorem that $u \in C^0([0, \e) \times \H)$, which implies that
\begin{align*}
	Pu &= \lim_{k \to \infty} P v_{k+1} = \lim_{k \to \infty}f(v_k) = f(u), \\
	u|_{t = 0} &= \lim_{k \to \infty} v_k|_{t = 0} = u_0.
\end{align*}
This proves the assertion.
\end{proof}

\begin{proof}[Finishing the proof of Theorem \ref{mainthm4}]
Let $u$ be the solution given by Lemma \ref{le: convergence_sequence}, associated with the fixed constants $m, N$.
Let us choose different $\tilde m$ and $\tilde N$ such that
\[
	\tilde N > D_{\tilde m}.
\]
By Lemma \ref{le: convergence_sequence} there is a $\tilde T > 0$ and a $\tilde u \in C^0([0, \tilde T], H^{2\tilde m+1}(\H)) \cap C^1([0, \tilde T], H^{2\tilde m}(\H))$ such that
\begin{align*}
	P\tilde u &= f(\tilde u), \\
	\tilde u|_{t = 0} &= u_0.
\end{align*}
We claim that $T = \tilde T$ and $u = \tilde u$.
Let $\hat m := \min(m, \tilde m)$ and $\hat T := \min(T, \tilde T)$. 
It follows that $u - \tilde u \in C^0([0, \hat T], H^{2\hat m}(\H)) \times C^1([0, \hat T], H^{2\hat m-1}(\H))$ and
\begin{align*}
	P(u - \tilde u) &= f(u) - f(\tilde u), \\
	(u - \tilde u)|_{t = 0} &= 0.
\end{align*}
Define $\a \in C^\infty([0, \e) \times \H)$ by
\[
	\a(t, x) := \int_0^1 f'(\tau u(t, x) + (1-\tau)\tilde u(t, x))d\tau.
\]
It follows that 
\begin{align*}
	P(u-\tilde u) &= f(u) - f(\tilde u) \\
		&= \int_0^1 f'(\tau u + (1-\tau) \tilde u)d\tau (u - \tilde u) \\
		&= \a (u- \tilde u).
\end{align*}
We conclude that
\begin{align*}
	P(u - \tilde u) - \a (u- \tilde u) &= 0, \\
	(u - \tilde u)|_{t = 0} &= 0.
\end{align*}
Since $P - \a$ is an admissible wave operator in the sense of Definition \ref{def: admissible}, Theorem \ref{mainthm2} implies that $u - \tilde u = 0$ as claimed.
It follows therefore that 
\[
	u \in C^0([0, \hat T], H^{2\max(m, \tilde m) + 1}(\H)) \cap C^1([0, \hat T], H^{2\max(m, \tilde m)}(\H)).
\]
A standard argument continuation argument for the Cauchy problem for semi-linear wave equations, using the energy estimate Theorem \ref{thm: Energy1} shows now that $\hat T = T = \tilde T$.
Since $\tilde m$ was arbitrary, it follows that
\[
	u \in C^0([0, T], H^{2m}(\H)) \times C^1([0, T], H^{2m-1}(\H))
\]
for all $m \in \N$ such that $2m > \frac{\dim(\H)}{2}$.
If we put $l = 0$ let $k \to \infty$ in Lemma \ref{le: bounded_sequence}, we get
\begin{equation}\label{eq: estimate_at_horizon_nonlinear}
	\norm{(u - w^N)(t, \cdot)}_{2m+1} + \sqrt t \norm{\d_t(u - w^N)(t, \cdot)}_{2m} \leq C_1 t^{N + \frac32}
\end{equation}
for any $m, N \in \N$ such that $2m > \frac{\dim(\H)}{2}$ and $N > D_m$.

Let us again write $P = \psi \d_t^2 + L_1 \d_t + L_2$, where $L_1$ and $L_2$ are differential operators in $\H_t$-direction of order $1$ and $2$ respectively.
Since $\psi(t, \cdot) > 0$ for all $t > 0$ we conclude that
\[
	\d_t^2 u = \frac 1 \psi (f(u) - L_1 \d_t u - L_2 u) \in C^0((0, T], H^{2m}(\H)),
\]
for all $m \in \N$. 
Iterating this for higher derivatives, we conclude that $u \in C^\infty((0, T] \times \H)$.

What remains is the regularity at the Cauchy horizon.
The idea is combine estimate \eqref{eq: estimate_at_horizon_nonlinear} with the equation
\begin{equation} \label{eq: reg_eq}
	\psi \d_t^2(u - w^N) = - L_1 \d_t(u - w^N) - L_2 (u-w^N) + (f(u) - f(w^N)) + (f(w^N) - Pw^N),
\end{equation}
for arbitrary $N \in \N$.
By Lemma \ref{le: non-linearity_estimate}, there are constants $C_2, C_3 > 0$ such that
\begin{align*}
	\norm{(f(u) - f(w^N))(t, \cdot)}_{2m} 
		&\leq C_2 \norm{(u - w^N)(t, \cdot)}_{2m} \\
		&\leq C_3t^{N + \frac32}
\end{align*}
for all $t \in [0, T]$.
From Lemma \ref{le: non-linear_asymptotic_expansion} we know that there is a constant $C_4 > 0$ such that
\[
	\norm{f(w^N) - Pw^N}_{2m} \leq C_4t^{N + 1}.
\]
Inserting these observations in \eqref{eq: reg_eq} and applying estimate \eqref{eq: estimate_at_horizon_nonlinear} proves that 
\[
	\norm{\d_t^2(u-w^N)}_{2m-1} \leq C_5 t^{N}.
\]
Continuing to calculate higher deriviatives $\d_t^j(u - w^N)$ using \eqref{eq: reg_eq}, one shows in a straightforward manner that
\[
	\norm{\d_t^j(u-w^N)}_{2m - (j - 1)} \leq C_{5, j} t^{N +(2 - j)}
\]
for all $j$ such that $j-2 < N$ and $j - 1 \leq 2m$.
Since we know that we can increase $m$ and $N$ whenever necessary, this shows that $u \in C^\infty([0, T] \times \H)$ as claimed.
This concludes the proof.
\end{proof}

\begin{bibdiv}
\begin{biblist}

\bib{ABIL2013}{article}{
   author={Ames, E.},
   author={Beyer, F.},
   author={Isenberg, J.},
   author={LeFloch, P. G.},
   title={Quasilinear hyperbolic Fuchsian systems and AVTD behavior in
   $T^2$-symmetric vacuum spacetimes},
   journal={Ann. Henri Poincar\'e},
   volume={14},
   date={2013},
   number={6},
   pages={1445--1523},
}

\bib{ABIL2013_2}{article}{
   author={Ames, E.},
   author={Beyer, F.},
   author={Isenberg, J.},
   author={LeFloch, P. G.},
   title={Quasilinear symmetric hyperbolic Fuchsian systems in several space
   dimensions},
   conference={
      title={Complex analysis and dynamical systems V},
   },
   book={
      series={Contemp. Math.},
      volume={591},
      publisher={Amer. Math. Soc., Providence, RI},
   },
   date={2013},
   pages={25--43},
}

\bib{AnderssonRendall2001}{article}{
   author={Andersson, L.},
   author={Rendall, A. D.},
   title={Quiescent cosmological singularities},
   journal={Comm. Math. Phys.},
   volume={218},
   date={2001},
   number={3},
   pages={479--511},
}

\bib{BaerGinouxPfaeffle2007}{book}{
   author={B\"ar, C.},
   author={Ginoux, N.},
   author={Pf\"affle, F.},
   title={Wave equations on Lorentzian manifolds and quantization},
   series={ESI Lectures in Mathematics and Physics},
   publisher={European Mathematical Society (EMS), Z\"urich},
   date={2007},
   pages={viii+194},
}

\bib{BaerWafo2014}{article}{
   author={B\"ar, C.},
   author={Tagne Wafo, R.},
   title={Initial value problems for wave equations on manifolds},
   journal={Math. Phys. Anal. Geom.},
   volume={18},
   date={2015},
   number={1},
   pages={Art. 7, 29},
}


\bib{BH2012}{article}{
   author={Beyer, F.},
   author={Hennig, J.},
   title={Smooth Gowdy-symmetric generalized Taub-NUT solutions},
   journal={Classical Quantum Gravity},
   volume={29},
   date={2012},
   number={24},
   pages={245017, 47},
}

\bib{BH2014}{article}{
   author={Beyer, F.},
   author={Hennig, J.},
   title={An exact smooth Gowdy-symmetric generalized Taub-NUT solution},
   journal={Classical Quantum Gravity},
   volume={31},
   date={2014},
   number={9},
   pages={095010, 33},
}

\bib{BL2010}{article}{
   author={Beyer, F.},
   author={LeFloch, P. G.},
   title={Second-order hyperbolic Fuchsian systems and applications},
   journal={Classical Quantum Gravity},
   volume={27},
   date={2010},
   number={24},
   pages={245012, 33},
}
		
\bib{BILY1978}{article}{
   author={Budic, R.},
   author={Isenberg, J.},
   author={Lindblom, L.},
   author={Yasskin, P. B.},
   title={On determination of Cauchy surfaces from intrinsic properties},
   journal={Comm. Math. Phys.},
   volume={61},
   date={1978},
   number={1},
   pages={87--95},
}

\bib{CP2012}{article}{
   author={Chru\'sciel, P. T.},
   author={Paetz, T-T.},
   title={The many ways of the characteristic Cauchy problem},
   journal={Classical Quantum Gravity},
   volume={29},
   date={2012},
   number={14},
   pages={145006, 27},
}

\bib{DossaTadmon2010}{article}{
   author={Dossa, M.},
   author={Tadmon, C.},
   title={The characteristic initial value problem for the
   Einstein-Yang-Mills-Higgs system in weighted Sobolev spaces},
   journal={Appl. Math. Res. Express. AMRX},
   date={2010},
   number={2},
   pages={154--231},
}

\bib{FRW1999}{article}{
   author={Friedrich, H.},
   author={R\'acz, I.},
   author={Wald, R. M.},
   title={On the rigidity theorem for spacetimes with a stationary event
   horizon or a compact Cauchy horizon},
   journal={Comm. Math. Phys.},
   volume={204},
   date={1999},
   number={3},
   pages={691--707},
}

\bib{Hormander1990}{article}{
   author={H\"ormander, L.},
   title={A remark on the characteristic Cauchy problem},
   journal={J. Funct. Anal.},
   volume={93},
   date={1990},
   number={2},
   pages={270--277},
}

\bib{IsenbergMoncrief2008}{article}{
   author={Moncrief, V.},
   author={Isenberg, J.},
   title={Symmetries of higher dimensional black holes},
   journal={Classical Quantum Gravity},
   volume={25},
   date={2008},
   number={19},
   pages={195015, 37},
}

\bib{HollandsIshibashiWald2007}{article}{
   author={Hollands, S.},
   author={Ishibashi, A.},
   author={Wald, R. M.},
   title={A higher dimensional stationary rotating black hole must be
   axisymmetric},
   journal={Comm. Math. Phys.},
   volume={271},
   date={2007},
   number={3},
   pages={699--722},
}

\bib{Kupeli1987}{article}{
   author={Kupeli, D. N.},
   title={On null submanifolds in spacetimes},
   journal={Geom. Dedicata},
   volume={23},
   date={1987},
   number={1},
   pages={33--51},
}

\bib{Larsson2014}{article}{
   author={Larsson, E.},
   title={Smoothness of compact horizons},
   journal={Ann. Henri Poincar\'e},
   volume={16},
   date={2015},
   number={9},
   pages={2163--2214},
}

\bib{Minguzzi2015}{article}{
   author={Minguzzi, E.},
   title={Area theorem and smoothness of compact Cauchy horizons},
   journal={Comm. Math. Phys.},
   volume={339},
   date={2015},
   number={1},
   pages={57--98},
}


\bib{Moncrief1982}{article}{
   author={Moncrief, V.},
   title={Neighborhoods of Cauchy horizons in cosmological spacetimes with
   one Killing field},
   journal={Ann. Physics},
   volume={141},
   date={1982},
   number={1},
   pages={83--103},
   issn={0003-4916},
   review={\MR{666736}},
}

\bib{Moncrief1984}{article}{
   author={Moncrief, V.},
   title={The space of (generalized) Taub-NUT spacetimes},
   journal={J. Geom. Phys.},
   volume={1},
   date={1984},
   number={1},
   pages={107--130},
   issn={0393-0440},
   review={\MR{764862}},
}

\bib{Moncrief1989}{article}{
   author={Moncrief, V.},
   title={The asymptotic behavior of nonlinear waves near a cosmological
   Cauchy horizon},
   journal={J. Math. Phys.},
   volume={30},
   date={1989},
   number={8},
   pages={1760--1768},
}

\bib{MoncriefIsenberg1983}{article}{
   author={Moncrief, V.},
   author={Isenberg, J.},
   title={Symmetries of cosmological Cauchy horizons},
   journal={Comm. Math. Phys.},
   volume={89},
   date={1983},
   number={3},
   pages={387--413},
}

\bib{MoncriefIsenberg2018}{article}{
   author={Moncrief, V.},
   author={Isenberg, J.},
   title={Symmetries of Cosmological Cauchy Horizons with Non-Closed Orbits},
   journal={Preprint: arXiv:1807.10141},
   date={2018},
}

\bib{IsenbergMoncrief1985}{article}{
   author={Isenberg, J.},
   author={Moncrief, V.},
   title={Symmetries of cosmological Cauchy horizons with exceptional
   orbits},
   journal={J. Math. Phys.},
   volume={26},
   date={1985},
   number={5},
   pages={1024--1027},
}

\bib{O'Neill1983}{book}{
   author={O'Neill, B.},
   title={Semi-Riemannian geometry},
   series={Pure and Applied Mathematics},
   volume={103},
   note={With applications to relativity},
   publisher={Academic Press, San Diego},
   date={1983},
   pages={xiii+468},
}

\bib{PetersenRacz2018}{article}{
   author={Petersen, O. L.},
   author={R\'acz, I.},
   title={Symmetries of vacuum spacetimes with a compact Cauchy horizon of constant non-zero surface gravity},
   journal={In preparation.}
   }

\bib{Rendall1990}{article}{
   author={Rendall, A. D.},
   title={Reduction of the characteristic initial value problem to the
   Cauchy problem and its applications to the Einstein equations},
   journal={Proc. Roy. Soc. London Ser. A},
   volume={427},
   date={1990},
   number={1872},
   pages={221--239},
}

\bib{R2000}{article}{
   author={R\'acz, I.},
   title={On further generalization of the rigidity theorem for spacetimes
   with a stationary event horizon or a compact Cauchy horizon},
   journal={Classical Quantum Gravity},
   volume={17},
   date={2000},
   number={1},
   pages={153--178},
}

\bib{Rendall2000}{article}{
   author={Rendall, A. D.},
   title={Fuchsian analysis of singularities in Gowdy spacetimes beyond
   analyticity},
   journal={Classical Quantum Gravity},
   volume={17},
   date={2000},
   number={16},
   pages={3305--3316},
}

\bib{Stahl2002}{article}{
   author={St\aa hl, F.},
   title={Fuchsian analysis of $S^2\times S^1$ and $S^3$ Gowdy spacetimes},
   journal={Classical Quantum Gravity},
   volume={19},
   date={2002},
   number={17},
   pages={4483--4504},
}

\end{biblist}
\end{bibdiv}

\end{sloppypar}
\end{document}